\newcommand{\E}{\text{\calligra xt}\,}
\newcommand{\Addresses}{{
  \bigskip
  \footnotesize

  \textsc{Department of Mathematics, University of Notre Dame,
    Notre Dame, IN 46556}\par\nopagebreak
  \textit{E-mail address}: \texttt{mperlman@nd.edu}

}}
\title{equivariant $\mathcal{D}$-modules on $2\times 2\times 2$ hypermatrices}
\author{Michael Perlman}
 \newtheorem{theorem}{Theorem}[section]
 \newtheorem{lemma}[theorem]{Lemma}
 \newtheorem{prop}[theorem]{Proposition}
\newtheorem* {class}{Classification of Simple Modules}
\newtheorem* {quiv}{Theorem on the Quiver Structure}
\newtheorem {convention}[theorem]{Convention}
\numberwithin{equation}{section}
\newcommand{\C}{\mathbb{C}}
\newcommand{\D}{\mathcal{D}}
\newcommand{\tn}{\textnormal}
\newcommand{\bS}{\mathbb{S}}
\newcommand{\defi}[1]{{\upshape\sffamily #1}}
\begin{document}
\begin{abstract}
Let $V=\mathbb{C}^2\otimes \mathbb{C}^2\otimes \mathbb{C}^2$ be the space of $2\times 2\times 2$ hypermatrices, endowed with the natural group action of $\tn{GL}=\textnormal{GL}_2(\C)\times \textnormal{GL}_2(\C)\times \textnormal{GL}_2(\C)$. The category of $\tn{GL}$-equivariant coherent left $\D$-modules on $V$ is equivalent to the category of representations of a quiver with relations. In this article, we give a construction of each simple object and study their $\tn{GL}$-equivariant structure. Using this information, we go on to explicitly describe the corresponding quiver with relations. As an application, we compute all iterations of local cohomology with support in the orbit closures of $V$.
\end{abstract}
\maketitle

\section{Introduction}
Let $A$, $B$, $C$ be two-dimensional complex vector spaces, and let $V=A\otimes B\otimes C$ be the space of $2\times 2\times 2$ hypermatrices. This space has a natural action of $\tn{GL}=\textnormal{GL}(A)\times \textnormal{GL}(B)\times \textnormal{GL}(C)$ with seven orbits. We describe these orbits below (see \cite[Table 10.3.1]{landsberg2012tensors}), choosing bases $A=\langle a_1,a_2\rangle$, $B=\langle b_1,b_2\rangle$, $C=\langle c_1,c_2\rangle$. 
\begin{itemize}
\item The zero orbit $O_0=\{0\}$.

\item The orbit $O_1$ of dimension $4$, with representative $a_1\otimes b_1\otimes c_1$, whose closure $\overline{O_1}$ is the affine cone over the Segre variety $\textnormal{Seg}(\mathbb{P}(A)\times \mathbb{P}(B)\times \mathbb{P}(C))\subseteq \mathbb{P}(V)$.

\item The orbit $O_{1,2,2}$ of dimension $5$, with representative $a_1\otimes (b_1\otimes c_1+ b_2\otimes c_2)$, whose closure $\overline{O_{1,2,2}}$ is the subspace variety
$$
\textnormal{Sub}_{1,2,2}(V)=\{T\in V\mid \exists A'\in \mathbb{P}(A), T\in A'\otimes B\otimes C \}.
$$
The orbits $O_{2,1,2}$ and $O_{2,2,1}$ are defined similarly.

\item The orbit $O_5$ of dimension $7$, with representative $a_1\otimes(b_1\otimes c_1+b_2\otimes c_2)+a_2\otimes b_1\otimes c_2$, whose closure $\overline{O_5}$ is the affine cone over the tangential variety to the Segre variety.

\item The dense orbit $O_6$, with representative $a_1\otimes b_1\otimes c_1+a_2\otimes b_2\otimes c_2$.
\end{itemize}
Let $S=\textnormal{Sym}(V^{\ast})\cong \mathbb{C}[x_{i,j,k}\mid 1\leq i,j,k \leq 2]$ be the ring of polynomial functions on $V$, and let $\D$ be the Weyl algebra of differential operators on $V$ with polynomial coefficients. In this article we study the category $\textnormal{mod}_{\tn{GL}}(\D)$ of $\tn{GL}$-equivariant coherent left $\D$-modules. It is known from the general theory that this category is equivalent to the category of representations of a quiver with relations \cite{gelfand1996, lHorincz2018categories, vilonen1994perverse}, and our analysis entails: (1) determining the quiver with relations $(\mathcal{Q},\mathcal{I})$ corresponding to $\textnormal{mod}_{\tn{GL}}(\D)$, (2) giving concrete constructions of the simple objects and understanding their equivariant structure. The information of (1) and (2) is useful because it aids in determining the filtration and composition factors of any object in $\tn{mod}_{\tn{GL}}(\D)$ that one may come across, such as local cohomology $\mathcal{H}^{\bullet}_{\overline{O}}(V,\mathcal{O}_V)$ with support in an orbit closure. 

This work is part of an ongoing effort to understand categories of equivariant $\D$-modules on irreducible representations with finitely many orbits, such as spaces of matrices, affine cones over Veronese varieties, and more \cite{lHorincz2017equivariant, lHorincz2018categories, raicu2016characters, raicu2017characters, lp2018equivariant}. The space of $2\times 2\times 2$ hypermatrices is part of the subexceptional series of representations of finitely many orbits \cite[Section 6]{series}, which also includes the space of binary cubic forms and the space of alternating senary 3-tensors. The categories of equivariant $\D$-modules on these spaces have been studied in \cite{lHorincz2017equivariant} and \cite{lp2018equivariant} respectively.

We begin by examining the simple objects in $\textnormal{mod}_{\tn{GL}}(\D)$. By the Riemann-Hilbert correspondence, each simple corresponds to a $\tn{GL}$-equivariant local system on one of the orbits. The classification in the case of $2\times 2\times 2$ hypermatrices is as follows:

\begin{class}
There are eight simple $\tn{GL}=\textnormal{GL}(A)\times \textnormal{GL}(B)\times \textnormal{GL}(C)$-equivariant $\D$-modules on $V=A\otimes B\otimes C$. For all orbits $O\neq O_6$, there is a unique simple with support $\overline{O}$. These modules correspond to the trivial local systems on their respective orbits, and we denote them by $D_0=E$, $D_1$, $D_{1,2,2}$, $D_{2,1,2}$, $D_{2,2,1}$, and $D_5$. There are two simple objects with full support: $D_6=S$ and $G_6$. 

The holonomic duality functor fixes all of the simple modules. The Fourier transform swaps the modules in the two pairs $(S,E)$, $(G_6,D_1)$, and all other simples are fixed. 
\end{class}

We recall the definitions of holonomic duality functor and the Fourier transform in Section \ref{D}. We now state the theorem on the quiver structure of the category of $\tn{GL}$-equivariant coherent $\D$-modules.

\begin{quiv}
There is an equivalence of categories $\textnormal{mod}_{\textnormal{GL}}(\mathcal{D}_V)\cong \textnormal{rep}(\mathcal{Q},\mathcal{I})$,
where $\textnormal{rep}(\mathcal{Q},\mathcal{I})$ is the category of finite-dimensional representations of a quiver $\mathcal{Q}$ with relations $\mathcal{I}$. The quiver $\mathcal{Q}$ is shown below. \\
$$
\begin{tikzcd}[column sep=large, row sep=large]
& &\\
s \arrow[r, shift left, "\varphi_0"]
& d_5 \arrow[r, shift left, "\varphi_1"]
\arrow[l, "\psi_0"]
& e \arrow[l, "\psi_1"]\\
\end{tikzcd}\;\;\;\;\;\;\;\;\;
\begin{tikzcd}[column sep=large, row sep=large]
& d_{1,2,2}\arrow[rd, "\gamma_{1,2,2}"] \arrow[dl, shift left, "\alpha_{1,2,2}"] & \\
g_6 \arrow[r, "\beta_{2,1,2}"]\arrow[ur, "\beta_{1,2,2}"]\arrow[dr, "\beta_{2,2,1}"] & d_{2,1,2}\arrow[r, "\gamma_{2,1,2}"] \arrow[l, shift left, "\alpha_{2,1,2}"]& d_1\arrow[l, shift left, "\delta_{2,1,2}"] \arrow[ul, shift left, "\delta_{1,2,2}"]\arrow[dl, shift left, "\delta_{2,2,1}"]\\
& d_{2,2,1}\arrow[ur, "\gamma_{2,2,1}"]\arrow[ul, shift left, "\alpha_{2,2,1}"]
\end{tikzcd}
$$
\vspace{.5cm}

\noindent The relations on the first connected component are given by: $\varphi_0\psi_0,\psi_0\varphi_0,\varphi_1\psi_1,\psi_1\varphi_1$. The relations on the second connected component are, for all $(i,j,k)$ and $(p,q,r)$:
$$
\;\;\beta_{i,j,k}\alpha_{i,j,k},\;\; \alpha_{i,j,k}\beta_{i,j,k},\;\;\delta_{i,j,k}\gamma_{i,j,k},\;\;\gamma_{i,j,k}\delta_{i,j,k},
$$
and for all $(i,j,k)\neq (p,q,r)$:
$$
\alpha_{i,j,k}\delta_{i,j,k}-\alpha_{p,q,r}\delta_{p,q,r},\;\; \gamma_{i,j,k}\beta_{i,j,k}-\gamma_{p,q,r}\beta_{p,q,r},\;\; \beta_{i,j,k}\alpha_{p,q,r}-\delta_{i,j,k}\gamma_{p,q,r}.
$$
\end{quiv}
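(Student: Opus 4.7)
My plan is to prove the theorem in two stages. Stage one fixes the underlying quiver $\mathcal{Q}$ by computing $\textnormal{Ext}^1$ between every pair of simples; stage two identifies the ideal of relations $\mathcal{I}$ by matching compositions in the path algebra against the composition factors of explicit $\D$-modules.

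For the underlying quiver, the Classification of Simple Modules fixes the eight vertices, so the task reduces to computing $\dim\textnormal{Ext}^1_{\D}(X,Y)$ for every ordered pair of simples $(X,Y)$. I would exploit three symmetries to reduce the bookkeeping. First, the Fourier transform, which swaps $(S,E)$ and $(G_6,D_1)$ while fixing the remaining simples, forces the arrow pattern to be invariant under these two swaps. Second, the holonomic duality functor, a contravariant autoequivalence that fixes every simple, gives $\textnormal{Ext}^1(X,Y)\cong\textnormal{Ext}^1(Y,X)$ and thus produces all the reverse arrows visible in the diagram. Third, the outer $S_3$-action permuting the three $\textnormal{GL}_2$-factors permutes $\{D_{1,2,2},D_{2,1,2},D_{2,2,1}\}$ and fixes the others, accounting for the three-fold symmetry of the second connected component. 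The $\textnormal{Ext}^1$ classes themselves I would produce using the long exact sequences in local cohomology attached to pairs of orbit closures $\overline{O_i}\subset \overline{O_j}$: the composition factors of each $\mathcal{H}^{\bullet}_{\overline{O_i}}(S)$ can be identified by comparing $\textnormal{GL}$-characters against those of the simples, and the nontrivial extensions among them furnish the drawn arrows. The matching upper bound $\dim\textnormal{Ext}^1=1$ per arrow is proved by bounding characters from above.

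For the relations, the asserted equalities $\alpha_{i,j,k}\delta_{i,j,k}-\alpha_{p,q,r}\delta_{p,q,r}$ and $\gamma_{i,j,k}\beta_{i,j,k}-\gamma_{p,q,r}\beta_{p,q,r}$ express that the two compositions $D_1\to D_{\bullet}\to G_6$ and $G_6\to D_{\bullet}\to D_1$ are independent of the subspace-variety vertex traversed; the vanishings $\beta_{p,q,r}\alpha_{i,j,k}$, $\delta_{p,q,r}\gamma_{i,j,k}$, $\alpha_{i,j,k}\beta_{i,j,k}$, and $\gamma_{i,j,k}\delta_{i,j,k}$ assert that various paths of length two vanish. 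To establish these I would compute the projective cover of each simple in $\textnormal{mod}_{\textnormal{GL}}(\D)$, whose Loewy filtration reads off exactly those paths out of the corresponding vertex that survive in the quiver algebra modulo $\mathcal{I}$. The natural candidates for these projectives are built from local cohomology modules and intermediate extensions on orbit closures, and the multiplicity of each simple in each such module is computable from the character data secured in the classification.

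The main obstacle is proving the \emph{completeness} of the list of relations. It is essentially routine to check that each listed relation holds in the Ext algebra, by exhibiting the corresponding composition as the zero morphism or as a common composition in some explicit $\D$-module. The difficulty is ruling out further hidden relations: one must compute the composition multiplicities of the projective cover of each simple exactly, and match them against the predicted dimensions of $e_Y\cdot(\C\mathcal{Q}/\mathcal{I})\cdot e_X$. The Fourier transform and the $S_3$-symmetry considerably reduce the number of independent cases, but the explicit construction and Loewy-layer analysis of the projective covers on the second connected component, whose more intricate shape reflects the non-trivial intersection patterns of the three subspace varieties $\overline{O_{i,j,k}}$, is the main technical hurdle.
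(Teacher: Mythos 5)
Your overall strategy is in the same spirit as the paper's, but you have left the hardest step unresolved, and the gap is exactly where the paper's key technical input lives. The paper's linchpin is the identification of $S_h$ and $S_h\cdot\sqrt{h}$ as the injective hulls of $S$ and $G_6$ in $\textnormal{mod}_{\tn{GL}}(\D)$, which follows from the localization structure together with the filtration dictated by the Bernstein--Sato polynomial $b_h(s)=(s+1)(s+3/2)^2(s+2)$ of the hyperdeterminant (via \cite[Proposition 4.9]{lHorincz2018categories} and \cite[Lemma 2.4]{lHorincz2017equivariant}). Because the composition factors and filtration of these two modules are explicitly known (Theorem \ref{witnessWeights}), one reads off both the $\textnormal{Ext}^1$ groups pointing into $s$ and $g_6$ and the multiplicity of every path terminating at those vertices, and the Fourier transform and duality then propagate this to the remaining vertices. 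Your proposal instead says the projective covers are ``built from local cohomology modules and intermediate extensions'' and calls their Loewy analysis ``the main technical hurdle'' --- that is, you have identified the bottleneck but not how to break it. Without recognizing that the two localization modules \emph{are} the needed injective hulls (and hence, by self-duality of the simples and the duality functor, the projective covers as well), you have no concrete handle on path multiplicities and hence no proof of completeness of the relations.

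A second, smaller issue: you assert the upper bound $\dim\textnormal{Ext}^1=1$ per arrow is ``proved by bounding characters from above.'' Character computations give composition multiplicities of specific modules; they do not directly bound $\textnormal{Ext}^1$ between simples. The bound comes from bullet (5) of Section \ref{PrelimQuiver}: the number of paths $n\to m$ equals the multiplicity of $N$ in the injective hull $I^M$. One needs an explicit $I^M$ with known composition factors --- again $S_h$ and $S_h\cdot\sqrt{h}$ --- before the character data becomes an upper bound on arrows. Also note that the paper additionally needs a separate Mayer--Vietoris argument in local cohomology (inside Lemma \ref{extBD}) to show $\textnormal{Ext}^1_{\D}(D_{i,j,k},D_5)=0$, because $D_5$'s injective hull is not made explicit; this is another piece your sketch would need to reproduce rather than treat as routine.
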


\noindent A key object used to determine which nontrivial extensions are possible is Cayley's hyperdeterminant $h\in S$ (see Section \ref{RepTheory}). This polynomial has weight $(-2,-2)^3$, and is the defining equation of the orbit closure $\overline{O_5}$. The two connected components of the quiver $\mathcal{Q}$ correspond to the composition factors of the modules $S_h$ and $S_h\cdot \sqrt{h}$ respectively, where $S_h$ denotes the localization of $S$ at the hyperdeterminant. We begin Section \ref{Witness} by examining the $\D$-module filtrations of these two modules.

Our motivation for understanding the category $\tn{mod}_{\tn{GL}}(\D)$ is the study of local cohomology with support in orbit closures \cite{lHorincz2018iterated, lHorincz2017equivariant, lp2018equivariant, raicu2016characters, raicu2017characters, raicu2014locals, raicu2016local, raicu2014local}. In general, for a closed subvariety $Z\subseteq V$ and a holonomic $\D$-module $M$, the local cohomology modules $\mathcal{H}^j_Z(V,M)$ are holonomic $\D$-modules (see, for instance, \cite{lyubeznik1993finiteness}). When $Z=\overline{O}$ is an orbit closure, local cohomology is a functor on $\textnormal{mod}_{\tn{GL}}(\D)$, and we conclude the paper by computing local cohomology of each simple object, with support in each orbit closure. When $O=O_{i,j,k}$ for $(i,j,k)=(1,2,2)$, $(2,1,2)$, or $(2,2,1)$,  the closure $\overline{O}$ is defined by the $2\times 2$ minors of a flattening. For example, the defining ideal of $\overline{O_{1,2,2}}$ is generated by the $2\times 2$ minors of the $2\times 4$ matrix of indeterminates $(y_{i,j})$ in $S=\tn{Sym}(A^{\ast}\otimes (B^{\ast}\otimes C^{\ast}))\cong \mathbb{C}[y_{i,j}\mid 1\leq i\leq 2, 1\leq j\leq 4]$. Thus, the local cohomology $\mathcal{H}^{\bullet}_{\overline{O_{i,j,k}}}(V,\mathcal{O}_V)$ is already known by \cite{raicu2014locals}. Using long exact sequences of cohomology and spectral sequences, these previous computations will be crucial in understanding local cohomology of simples such as $G_6$. As a consequence of our computations, we calculate all iterations of local cohomology of any simple module with support in orbit closures.\\

\noindent \textbf{Organization.} In Section \ref{Prelim} we review the necessary background on representation theory, $\D$-modules, and local cohomology. In Section \ref{Category} we prove the main theorems. We conclude the paper in Section \ref{Local} with some local cohomology computations.

\section{Preliminaries}\label{Prelim}
\subsection{Representation Theory}\label{RepTheory} Let $W$ be a complex vector space of dimension two, and write $\text{GL}(W)$ for its group of automorphisms. The irreducible finite-dimensional representations of $\text{GL}(W)$ are indexed by dominant weights $\lambda=(\lambda_1\geq \lambda_2)\in \mathbb{Z}^2$, and we write $\bS_{\lambda}W$ for the corresponding representation. We will write $\mathbb{Z}^2_{\text{dom}}$ for the set of dominant weights throughout. Given $\lambda\in \mathbb{Z}^2_{\textnormal{dom}}$, denote by $|\lambda|=\lambda_1+\lambda_2$ the sum of the entries. Note that $\bS_{\lambda}W^{\ast}=\bS_{\lambda^{\ast}}W$, where $\lambda^{\ast}=(-\lambda_2,-\lambda_1)$. Given a representation $U$ of dimension $n$, we write $\textnormal{det}(U)$ to denote the highest exterior power $\wedge^n U$. If $W$ is the vector representation, then $\textnormal{det}(W)^{\otimes r}$ is the irreducible representation $\bS_{(r,r)}W$.

Let $A$, $B$, and $C$ be two-dimensional complex vector spaces and let $\tn{GL}=\text{GL}(A)\times \text{GL}(B)\times \text{GL}(C)$. Write $\Lambda=\{\bS_{\lambda}A\otimes \bS_{\mu}B\otimes \bS_{\nu}C\mid \lambda, \mu,\nu\in \mathbb{Z}^2_{\text{dom}}\}$ for the set of isomorphism classes of finite-dimensional irreducible representations of $\tn{GL}$. We define the Grothendieck group of admissible representations $\Gamma(\tn{GL})$ to be the set of isomorphism classes of representations of the form
\begin{equation}
M=\bigoplus_{(\lambda, \mu,\nu)\in \Lambda} \left(\bS_{\lambda}A\otimes \bS_{\mu}B\otimes \bS_{\nu}C\right)^{\oplus a_{\lambda, \mu,\nu}},
\end{equation}
where $a_{\lambda, \mu,\nu}\in \mathbb{Z}_{\geq 0}$ is the multiplicity of $\bS_{\lambda}A\otimes \bS_{\mu}B\otimes \bS_{\nu}C$ in $M$. For $M$ as above, the correspondng element $[M]\in \Gamma(\tn{GL})$ is written
\begin{equation}
[M]=\sum_{(\lambda, \mu,\nu)\in \Lambda} a_{\lambda, \mu,\nu}\cdot \left[\bS_{\lambda}A\otimes \bS_{\mu}B\otimes \bS_{\nu}C\right].
\end{equation}
We write $\langle [M],[\bS_{\lambda}A\otimes \bS_{\mu}B\otimes \bS_{\nu}C]\rangle=a_{\lambda,\mu,\nu}$ for the multiplicity. A sequence $([M_r])_r$ of elements of $\Gamma(\tn{GL})$ is convergent if for every $(\lambda,\mu,\nu)\in \Lambda$, the sequence of integers $\langle [M_r],[\bS_{\lambda}A\otimes \bS_{\mu}B\otimes \bS_{\nu}C]\rangle$ is constant for $r\gg 0$. Suppose that $a_{\lambda,\mu,\nu}=\lim_{r\to \infty}\langle [M_r],[\bS_{\lambda}A\otimes \bS_{\mu}B\otimes \bS_{\nu}C]\rangle$ for all $(\lambda,\mu,\nu)\in \Lambda$. Then we write
\begin{equation}\label{limitForm}
\lim_{r\to\infty}[M_r]=\sum_{(\lambda,\mu,\nu)\in \Lambda} a_{\lambda,\mu,\nu}\cdot [\bS_{\lambda}A\otimes \bS_{\mu}B\otimes \bS_{\nu}C].
\end{equation}

In Section \ref{PrelimWit} and the proof of Lemma \ref{witD1}, we will sometimes write $[\bS_{\lambda}A\otimes \bS_{\mu}B\otimes \bS_{\nu}C]$ in the case when one or more of $\lambda,\mu,\nu\in \mathbb{Z}^2$ are not dominant. We explain what is meant by this notation here. Let $\pi\in \mathbb{Z}^2$ be one of $\lambda, \mu,\nu$. Let $\rho=(1,0)$ and consider $\pi+\rho=(\pi_1+1, \pi_2)$. Write $\textnormal{sort}(\pi+\rho)$ for the sequence of integers obtained by arranging the entries of $\pi+\rho$ in non-increasing order, and let $\tilde{\pi}=\textnormal{sort}(\pi+\rho)-\rho$. In other words, if $\pi_1+1\geq \pi_2$, then $\tilde{\pi}=\pi$, and if $\pi_2>\pi_1+1$, then $\tilde{\pi}=(\pi_2-1,\pi_1+1)$. Note that if $\pi_1+1=\pi_2$, then $\tilde{\pi}$ is not dominant, and it is dominant otherwise. Using this notation, $[\bS_{\lambda}A\otimes \bS_{\mu}B\otimes \bS_{\nu}C]$ is defined to be
\begin{equation}\label{bott}
[\bS_{\lambda}A\otimes \bS_{\mu}B\otimes \bS_{\nu}C]=
\begin{cases}
\textnormal{sgn}(\lambda)\cdot \textnormal{sgn}(\mu)\cdot \textnormal{sgn}(\nu)\cdot [\bS_{\tilde{\lambda}}A\otimes \bS_{\tilde{\mu}}B\otimes \bS_{\tilde{\nu}}C] & \textnormal{ if $\tilde{\lambda}, \tilde{\mu},\tilde{\nu}$ are dominant}\\
0 & \textnormal{otherwise}
\end{cases}
\end{equation}
where $\textnormal{sgn}(\pi)$ is the sign of the unique permutation that sorts $\pi+\rho$. 

We now recall some results about the $\tn{GL}$-equivariant structure of the polynomial ring $S$. Let $\Lambda_+$ be the set of elements $(\lambda,\mu,\nu)\in \Lambda$ with $\lambda_2,\mu_2,\nu_2\geq 0$, and recall $V=A\otimes B\otimes C$ and $S=\textnormal{Sym}(V^{\ast})$. By \cite[Proposition 4.1]{landsberg2004ideals} we have
\begin{equation}\label{charS}
[S]=\sum_{\substack{(\lambda,\mu,\nu)\in \Lambda_{+},\; d\geq0\\|\lambda|=|\mu|=|\nu|=d}}\dim_{\C}([\lambda]\otimes [\mu]\otimes [\nu])^{\Sigma_d}\cdot [\bS_{\lambda}A^{\ast}\otimes \bS_{\mu}B^{\ast}\otimes \bS_{\nu}C^{\ast}],
\end{equation}
where $[\pi]$ denotes the irreducible representation the symmetric group $\Sigma_d$ corresponding to $\pi$, and $([\lambda]\otimes [\mu]\otimes [\nu])^{\Sigma_d}$ denotes the space of $\Sigma_d$-invariants (instances of the
trivial representation) in the tensor product. We recall \cite[Corollary 4.3a]{raicu2012secant}, which allows us to compute these dimensions:
\begin{lemma}\label{raicuFormula}
Suppose that $\lambda, \mu, \nu\in \Lambda_{+}$, and $|\lambda|=|\mu|=|\nu|=d$ for some integer $d\geq 0$. Set
$$
m_{\lambda,\mu,\nu}=\dim_{\C}([\lambda]\otimes [\mu]\otimes [\nu])^{\Sigma_d},
\;\;\;f_{\lambda,\mu,\nu}=\max \{\lambda_2,\mu_2,\nu_2\},\;\;\; e_{\lambda,\mu,\nu}=\lambda_2+\mu_2+\nu_2.
$$
If $e_{\lambda,\mu,\nu}<2f_{\lambda,\mu,\nu}$, then $m_{\lambda,\mu,\nu}=0$. If $e_{\lambda,\mu,\nu}\geq d-1$, then $m_{\lambda,\mu,\nu}=\lfloor d/2\rfloor-f_{\lambda,\mu,\nu}+1$, unless $e_{\lambda,\mu,\nu}$ is odd and $d$ is even, in which case $m_{\lambda,\mu,\nu}=\lfloor d/2\rfloor-f_{\lambda,\mu,\nu}$. If $e_{\lambda,\mu,\nu}<d-1$ and $e_{\lambda,\mu,\nu}\geq 2f_{\lambda,\mu,\nu}$, then $m_{\lambda,\mu,\nu}=\lfloor (e_{\lambda,\mu,\nu}+1)/2\rfloor-f_{\lambda,\mu,\nu}+1$, unless $e_{\lambda,\mu,\nu}$ is odd, in which case $m_{\lambda,\mu,\nu}=\lfloor(e_{\lambda,\mu,\nu}+1)/2\rfloor-f_{\lambda,\mu,\nu}$.
\end{lemma}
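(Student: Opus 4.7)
The plan is to compute the Kronecker coefficient $m_{\lambda,\mu,\nu} = \dim([\lambda]\otimes[\mu]\otimes[\nu])^{\Sigma_d}$ for triples of two-row partitions of $d$. Writing $\lambda=(d-a,a)$, $\mu=(d-b,b)$, $\nu=(d-c,c)$ with $a,b,c\geq 0$, so that $e=a+b+c$ and $f=\max(a,b,c)$, the starting point is the Jacobi-Trudi identity, which in virtual-character form reads
\[
[d-k,k] \;=\; M^{(d-k,k)} \,-\, M^{(d-k+1,k-1)} \qquad \text{in } K_0(\Sigma_d),
\]
where $M^{(d-k,k)} = \textnormal{Ind}_{\Sigma_{d-k}\times \Sigma_k}^{\Sigma_d}(\textnormal{triv})$ is the permutation module on two-row tabloids. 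Since $\Sigma_d$-irreducibles are self-dual, $m_{\lambda,\mu,\nu} = \dim \textnormal{Hom}_{\Sigma_d}([\lambda],[\mu]\otimes[\nu])$, and by Frobenius reciprocity $\dim \textnormal{Hom}_{\Sigma_d}(M^{(d-k,k)}, W) = \dim W^{\Sigma_{d-k}\times \Sigma_k}$ for any $\Sigma_d$-representation $W$.

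Applying the Jacobi-Trudi expansion to $[\lambda]$ reduces the computation to the difference
\[
m_{\lambda,\mu,\nu} \;=\; \dim([\mu]\otimes[\nu])^{\Sigma_{d-a}\times \Sigma_a} \,-\, \dim([\mu]\otimes[\nu])^{\Sigma_{d-a+1}\times \Sigma_{a-1}}.
\]
Each summand is evaluated by restricting $[\mu]$ and $[\nu]$ to the indicated Young subgroup via the branching rule for two-row partitions, a Pieri-type rule under which the restriction is multiplicity-free with each constituent indexed by a pair of two-row shapes satisfying explicit linear inequalities. Counting the pairs whose tensor product contains the trivial constituent reduces to enumerating nonnegative integer solutions to a system of linear inequalities in $a,b,c$ and $d$, i.e., to lattice points in a one-dimensional polytope whose size is a floor of an affine function of these parameters.

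The final step is the case analysis dictated by the piecewise nature of this polytope. The vanishing regime $e<2f$ corresponds to the polytope being empty, reflecting a triangle-inequality obstruction: if one of $a,b,c$ exceeds the sum of the other two, no invariant can exist. The two nontrivial regimes $e\geq d-1$ and $2f\leq e<d-1$ correspond respectively to whether the ambient constraint (large $e$) or the triangle-inequality constraint (moderate $e$) cuts the upper endpoint of the polytope. The parity corrections appearing for $e$ odd, or for $e$ odd and $d$ even, arise from floor functions applied at half-integer endpoints. The main obstacle will be this delicate bookkeeping: one must correctly collate the two floor-function counts from the Jacobi-Trudi expansion and verify that their difference collapses to the stated closed form. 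A slicker alternative would invoke the classical Remmel-Whitehead formula for Kronecker products of two-row $\Sigma_d$-characters and extract the triple-invariant dimension directly.
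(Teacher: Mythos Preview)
The paper does not prove this lemma at all: it is quoted verbatim as \cite[Corollary 4.3a]{raicu2012secant}, so there is no in-paper argument to compare against. Your proposal therefore goes well beyond what the paper does, supplying an actual derivation where the paper only cites.

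As for the soundness of your outline: the strategy is correct and is essentially one of the standard routes to the two-row Kronecker formula. The Jacobi--Trudi expansion $[d-a,a] = M^{(d-a,a)} - M^{(d-a+1,a-1)}$ together with Frobenius reciprocity reduces $m_{\lambda,\mu,\nu}$ to a difference of Young-subgroup invariant dimensions, and each of those is $\sum_{\alpha,\beta} c^{\mu}_{\alpha\beta}\,c^{\nu}_{\alpha\beta}$ with $|\alpha|=d-a$, $|\beta|=a$. For two-row $\mu,\nu$ the Littlewood--Richardson coefficients are $0$ or $1$, so this is genuinely a lattice-point count in a one-parameter family, and the three regimes (empty polytope, upper endpoint cut by $d$, upper endpoint cut by the triangle inequality) with their parity corrections fall out exactly as you describe. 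This is close in spirit to how Rosas and Remmel--Whitehead organize the computation, so invoking their formula directly, as you suggest at the end, would be the cleanest alternative. The only caveat is that your write-up is a plan rather than a proof: the ``delicate bookkeeping'' you flag is where all the content lies, and until those floor-function identities are actually verified against the stated piecewise formula, the argument is incomplete.
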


\begin{convention}\label{convention}
In what follows, for $[M]\in \Gamma(\tn{GL})$ with $\langle [M], [\bS_{\lambda}A\otimes \bS_{\mu}B\otimes \bS_{\nu}C]\rangle \neq 0$, we will sometimes write $\lambda\times \mu\times \nu \in [M]$ or say that ``$\lambda\times \mu\times \nu$ is a weight of $M$". Under this convention, the ring $S$ consists of negative weights. For example $(0,0)^3=(0,0)\times (0,0)\times (0,0)$ and $(0,-2)\times (-1,-1)^2$ are weights of $S$.
\end{convention}

Let $h\in S$ be \defi{Cayley's hyperdeterminant}:
\begin{align*}
h & = x_{1,1,1}^2x_{2,2,2}^2 + x_{1,1,2}^2x_{2,2,1}^2 + x_{1,2,1}^2x_{2,1,2}^2 + x_{2,1,1}^2x_{1,2,2}^2 -2x_{1,1,1}x_{1,1,2}x_{2,2,1}x_{2,2,2}- 2x_{1,1,1}x_{1,2,1}x_{2,1,2}x_{2,2,2}\\
& -2x_{1,1,1}x_{1,2,2}x_{2,1,1}x_{2,2,2}-2x_{1,1,2}x_{1,2,1}x_{2,1,2}x_{2,2,1} -2x_{1,1,2}x_{1,2,2}x_{2,2,1}x_{2,1,1}-2x_{1,2,1}x_{1,2,2}x_{2,1,2}x_{2,1,1}\\
&+ 4x_{1,1,1}x_{1,2,2}x_{2,1,2}x_{2,2,1} + 4x_{1,1,2}x_{1,2,1}x_{2,1,1}x_{2,2,2}.
\end{align*}
In Section \ref{Category}, we study the $\D$-module filtrations of $S_h$ and $S_h\cdot \sqrt{h}$, where $S_h$ denotes the localization of $S$ at $h$. In order to do so, we first discuss the $\tn{GL}$-equivariant structure of these modules. Note that $h$ has weight $(-2,-2)^3$, and is the defining equation of $\overline{O_5}$. Using convention (\ref{limitForm}), we have that $[S_h]=\lim_{r\to \infty} [S\cdot h^{-r}]$. Let $(\lambda,\mu,\nu)\in \Lambda$. We have the following method to compute the multiplicity of $[\bS_{\lambda}A\otimes \bS_{\mu}B\otimes \bS_{\nu}C]$ in $[S_h]$:
\begin{equation}\label{compLocal}
\langle [S_h], [\bS_{\lambda}A\otimes \bS_{\mu}B\otimes \bS_{\nu}C]\rangle = \lim_{r\to \infty}\langle [S], [\bS_{(\lambda_1-2r,\lambda_2-2r)}A\otimes \bS_{(\mu_1-2r,\mu_2-2r)}B\otimes \bS_{(\nu_1-2r,\nu_2-2r)}C]\rangle.
\end{equation}
Also, since $\sqrt{h}$ has weight $(-1,-1)^3$, we obtain the following for $[S_h\cdot \sqrt{h}]$:
\begin{equation}\label{compSqrt}
\langle [S_h\cdot \sqrt{h}], [\bS_{\lambda}A\otimes \bS_{\mu}B\otimes \bS_{\nu}C]\rangle = \langle [S_h], [\bS_{(\lambda_1+1,\lambda_2+1)}A\otimes \bS_{(\mu_1+1,\mu_2+1)}B\otimes \bS_{(\nu_1+1,\nu_2+1)}C]\rangle.
\end{equation}
We record the following for use in Section \ref{Witness}:
\begin{lemma}\label{multSh}
Let $a\geq 0$ be an integer. If $a$ is even, then $\bS_{(a,a)}A\otimes \bS_{(a,a)}B\otimes \bS_{(a,a)}C$ has multiplicity zero in $S_h\cdot \sqrt{h}$ and multiplicity one in $S_h$. If $a$ is odd, then $\bS_{(a,a)}A\otimes \bS_{(a,a)}B\otimes \bS_{(a,a)}C$, $\bS_{(3,1)}A\otimes \bS_{(2,2)}B\otimes \bS_{(2,2)}C$, $\bS_{(2,2)}A\otimes \bS_{(3,1)}B\otimes \bS_{(2,2)}C$, and $\bS_{(2,2)}A\otimes \bS_{(2,2)}B\otimes \bS_{(3,1)}C$ have multiplicity zero in $S_h$ and multiplicity one in $S_h\cdot \sqrt{h}$.
\end{lemma}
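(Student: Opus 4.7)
The plan is to reduce each multiplicity computation to an application of Lemma \ref{raicuFormula} via the character formula (\ref{charS}), combined with the shift identities (\ref{compLocal}) and (\ref{compSqrt}). Under Convention \ref{convention} the weights appearing in $S$ are triples $(\lambda,\mu,\nu)$ of dominant weights in $\mathbb{Z}^2$ with non-positive entries and equal sums; using $\bS_\pi W^\ast = \bS_{\pi^\ast}W$ with $\pi^\ast = (-\pi_2, -\pi_1)$, the multiplicity of $\bS_\lambda A \otimes \bS_\mu B \otimes \bS_\nu C$ in $S$ equals $m_{\lambda^\ast, \mu^\ast, \nu^\ast}$ whenever the latter triple lies in $\Lambda_+$, and is zero otherwise. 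Thus each multiplicity in $S_h$ or $S_h \cdot \sqrt{h}$ becomes a limit as $r \to \infty$ of such $m$'s for an explicit sequence of shifted triples.

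For the weight $(a,a)^3$ in $S_h$, applying (\ref{compLocal}) and dualizing yields $\tilde{\lambda} = \tilde{\mu} = \tilde{\nu} = (2r-a, 2r-a)$, with $d = 4r-2a$, $f = 2r-a$, and $e = 6r-3a$. For $r$ large one has $e \geq d-1$ and $e \geq 2f$, so Lemma \ref{raicuFormula} gives $m = \lfloor d/2 \rfloor - f + 1 = 1$, except when $e$ is odd and $d$ even, in which case $m = \lfloor d/2 \rfloor - f = 0$. Since $d$ is always even and $e \equiv a \pmod{2}$, this yields multiplicity $1$ if $a$ is even and $0$ if $a$ is odd. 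The corresponding statement for $S_h \cdot \sqrt{h}$ follows from (\ref{compSqrt}), which replaces $a$ by $a+1$ and flips the parity.

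For the weight $\bS_{(3,1)}A \otimes \bS_{(2,2)}B \otimes \bS_{(2,2)}C$ in $S_h$, the same procedure yields the dualized triple $((2r-1, 2r-3), (2r-2, 2r-2), (2r-2, 2r-2))$ with $d = 4r-4$, $f = 2r-2$, and $e = 6r-7$ odd, so Raicu's formula gives $m = 0$. For $S_h \cdot \sqrt{h}$, (\ref{compSqrt}) sends this weight to $(4,2) \times (3,3) \times (3,3)$ in $S_h$, whose dualized triple has $d = 4r-6$, $f = 2r-3$, and $e = 6r-10$ even, producing $m = 1$. The two remaining cases with $(3,1)$ in the $B$- or $C$-slot follow by the $\Sigma_3$-symmetry of $S_h$ and $S_h \cdot \sqrt{h}$ in the three tensor factors, which is inherited from $V = A \otimes B \otimes C$ and from the fact that $h$ has the symmetric weight $(-2,-2)^3$. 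The main point to handle carefully is the dualization imposed by Convention \ref{convention} (needed for the triples to lie in $\Lambda_+$) together with the ``$e$ odd, $d$ even'' exception in Lemma \ref{raicuFormula}; once these are set up, every case collapses to a parity check.
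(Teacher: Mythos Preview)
Your proof is correct and follows essentially the same route as the paper's own proof: reduce via (\ref{compLocal}) and (\ref{compSqrt}) to multiplicities in $S$, dualize into $\Lambda_+$, and apply Lemma \ref{raicuFormula}, with the parity of $e$ versus $d$ deciding each case. The only cosmetic difference is that you invoke the $\Sigma_3$-symmetry for the $(2,1,2)$ and $(2,2,1)$ cases, whereas the paper simply leaves those ``similar results'' to the reader.
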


\begin{proof} By (\ref{compLocal}), the multiplicity of $\bS_{(a,a)}A\otimes \bS_{(a,a)}B\otimes \bS_{(a,a)}C$ in $S_h$ is given by 
\begin{equation}\label{tempLimit}
\lim_{r\to \infty}\langle [S], [\bS_{(a-2r,a-2r)}A\otimes \bS_{(a-2r,a-2r)}B\otimes \bS_{(a-2r,a-2r)}C]\rangle.
\end{equation}
Since $\bS_{(a-2r,a-2r)}A=\bS_{(2r-a,2r-a)}A^{\ast}$, we compute the multiplicity of $\bS_{(2r-a,2r-a)}A^{\ast}\otimes \bS_{(2r-a,2r-a)}B^{\ast}\otimes \bS_{(2r-a,2r-a)}C^{\ast}$ in $S$ for $r\gg 0$. Let $(\lambda,\mu,\nu)=((2r-a,2r-a),(2r-a,2r-a),(2r-a,2r-a))$, and use notation from Lemma \ref{raicuFormula}, setting $d=4r-2a$. In this situation, $f_{\lambda,\mu,\nu}=2r-a$, and $e_{\lambda,\mu,\nu}=6r-3a$. For $r\gg 0$, we have $e_{\lambda,\mu,\nu}\geq d-1$ and $e_{\lambda,\mu,\nu}\geq 2f_{\lambda,\mu,\nu}$. Thus by Lemma \ref{raicuFormula}, it follows that (for $r\gg 0$) $m_{\lambda,\mu,\nu}=1$ if $a$ is even and $m_{\lambda,\mu,\nu}=0$ if $a$ is odd. By (\ref{charS}) and (\ref{tempLimit}), it follows that if $a$ is even, then $\bS_{(a,a)}A\otimes \bS_{(a,a)}B\otimes \bS_{(a,a)}C$ appears in $S_h$ with multiplicity one, and if $a$ is odd, then $\bS_{(a,a)}A\otimes \bS_{(a,a)}B\otimes \bS_{(a,a)}C$ does not appear in $S_h$. 

The assertions about the multiplicity of $\bS_{(a,a)}A\otimes \bS_{(a,a)}B\otimes \bS_{(a,a)}C$ in $S_h\cdot \sqrt{h}$ follow from the first paragraph and (\ref{compSqrt}). We now prove the assertion about $\bS_{(3,1)}A\otimes \bS_{(2,2)}B\otimes \bS_{(2,2)}C$, leaving the similar results about $\bS_{(2,2)}A\otimes \bS_{(3,1)}B\otimes \bS_{(2,2)}C$, and $\bS_{(2,2)}A\otimes \bS_{(2,2)}B\otimes \bS_{(3,1)}C$ to the reader. By (\ref{compLocal}), to compute the multiplicity of $\bS_{(3,1)}A\otimes \bS_{(2,2)}B\otimes \bS_{(2,2)}C$ in $S_h$, we need to determine
\begin{equation}\label{tempLimit1}
\lim_{r\to \infty}\langle [S], [\bS_{(3-2r,1-2r)}A\otimes \bS_{(2-2r,2-2r)}B\otimes \bS_{(2-2r,2-2r)}C]\rangle.
\end{equation}
Due to the relation $\bS_{(\lambda_1,\lambda_2)}A=\bS_{(-\lambda_2,-\lambda_1)}A^{\ast}$, we compute the multiplicity of $\bS_{(2r-1,2r-3)}A^{\ast}\otimes \bS_{(2r-2,2r-2)}B^{\ast}\otimes \bS_{(2r-2,2r-2)}C^{\ast}$ in $S$ for $r\gg 0$. Set $(\lambda,\mu,\nu)=((2r-1,2r-3),(2r-2,2r-2),(2r-2,2r-2))$. Again, using the notation from Lemma \ref{raicuFormula}, we have $d=4r-4$, $f_{\lambda,\mu,\nu}=2r-2$, and $e_{\lambda,\mu,\nu}=6r-7$. For $r\gg 0$, $e_{\lambda,\mu,\nu}\geq 2f_{\lambda,\mu,\nu}$ and $e_{\lambda,\mu,\nu}\geq d-1$. Thus (for $r\gg 0$) $m_{\lambda,\mu,\nu}=0$, so by (\ref{charS}) and (\ref{tempLimit1}), the multiplicity of $\bS_{(3,1)}A\otimes \bS_{(2,2)}B\otimes \bS_{(2,2)}C$ in $S_h$ is zero, as claimed. Finally, we show that the multiplicity of $\bS_{(3,1)}A\otimes \bS_{(2,2)}B\otimes \bS_{(2,2)}C$ in $S_h\cdot \sqrt{h}$ is one. Using (\ref{compLocal}) and (\ref{compSqrt}), we need to show that the following limit is equal to one:
\begin{equation}\label{tempLimit2}
\lim_{r\to \infty}\langle [S], [\bS_{(2r-2,2r-4)}A^{\ast}\otimes \bS_{(2r-3,2r-3)}B^{\ast}\otimes \bS_{(2r-3,2r-3)}C^{\ast}]\rangle
\end{equation}
Notice that we have written this expression in terms of dual representations, so that we may apply Lemma \ref{raicuFormula}. Let $(\lambda,\mu,\nu)=((2r-2,2r-4),(2r-3,2r-3),(2r-3,2r-3))$, so that $d=4r-6$, $f_{\lambda,\mu,\nu}=2r-3$, and $e_{\lambda,\mu,\nu}=6r-10$. By Lemma \ref{raicuFormula}, we obtain that $m_{\lambda,\mu,\nu}=1$ for $r\gg 0$. Thus, the limit (\ref{tempLimit2}) is equal to one, as needed.
\end{proof}

\subsection{Equivariant $\mathcal{D}$-modules and Local Cohomology}\label{D}
In this section, let $V$ be a finite dimensional complex vector space, thought as an affine space, and let $G$ be a connected linear algebraic group acting on $V$. Let $\D$=$\D_V$ be the Weyl algebra of differential operators with polynomial coefficients. Consider the category $\textnormal{mod}(\D)$ of finitely-generated left $\D$-modules with $\D$-linear maps as morphisms. A $\D$-module $M$ is $G$-equivariant if there is a $\D_{G\times V}$-module isomorphism $\tau:p^{\ast}M\to m^{\ast}M$, where $p:G\times V\to V$ is the projection, $m:G\times V\to V$ is the multiplication map, and $\tau$ satisfies the co-cycle conditions (see \cite[Definition 11.5.2]{hotta2007d}). Write $\textnormal{mod}_G(\D)$ for the full subcategory of finitely-generated $G$-equivariant (left) $\D$-modules. By \cite[Proposition 3.1.2]{van1999local}, morphisms in $\textnormal{mod}_G(\D)$ are automatically $G$-equivariant.

For $Z\subseteq V$ a $G$-stable closed subvariety, write $\textnormal{mod}^Z_G(\D)$ for the full subcategory of $\textnormal{mod}_G(\D)$ consisting of modules with support contained in $Z$. By the Riemann-Hilbert correspondence, simple objects in $\textnormal{mod}^Z_G(\D)$ correspond to $G$-equivariant irreducible local systems on open subsets of $Z$. Each one yields a simple object in $\textnormal{mod}_G(\D)$. Let $\mathcal{M}$ be a $G$-equivariant irreducible local system on an open subset of $Z$ contained in the smooth locus, and write $\mathcal{L}(Z,\mathcal{M},V)$ for the simple object corresponding to $\mathcal{M}$. This is called the intersection homology $\D$-module. When $\mathcal{M}$ is the trivial local system, we simply write $\mathcal{L}(Z,V)$ for the intersection homology $\D$-module of $\mathcal{M}$. By \cite[Theorem 11.6.1]{hotta2007d}, we have the following:
\begin{theorem}\label{eqRH}
Suppose that $G$ acts on $V$ with finitely many orbits. 

(a) There is a bijective correspondence
$$
\left\{(O,\mathcal{M})\mid \substack{\text{$O$ is a $G$-orbit,}\\ \text{$\mathcal{M}$ is an equivariant irreducible local system on $O$}}\right\}\leftrightarrow \{\text{simple objects in $\textnormal{mod}_G(\D)$}\}
$$
where $(O,\mathcal{M})$ corresponds to $\mathcal{L}(O,\mathcal{M},V)$. 

(b) Moreover, if we fix an orbit $O=G/K$ and set $K^0$ to be the connected component of the identity in $K$, then there is a bijective correspondence
$$
\{\text{equivariant irreducible local systems on $O$}\}\leftrightarrow \{\text{irreducible representations of $K/K^0$}\}.
$$
\end{theorem}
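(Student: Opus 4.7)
The plan is to deduce the statement from the equivariant Riemann--Hilbert correspondence, which identifies the subcategory of regular holonomic objects in $\tn{mod}_G(\D)$ with the category of $G$-equivariant perverse sheaves on $V$ having $\C$-constructible cohomology. Since equivalences of categories preserve simple objects, it suffices to classify simple $G$-equivariant perverse sheaves on $V$ and then translate the data of an equivariant irreducible local system on a homogeneous space into representation theory of the isotropy group.

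For part (a), I would first invoke the finite-orbit hypothesis to observe that the decomposition $V=\bigsqcup O$ is a $G$-invariant algebraic stratification with smooth, locally closed strata. Any simple $G$-equivariant perverse sheaf $\mathcal{F}$ is supported on an irreducible closed $G$-stable subvariety, and by the orbit-finiteness assumption this support must be the closure $\overline{O}$ of a single orbit. Restricting $\mathcal{F}$ to the open stratum $O$ (and shifting appropriately) produces a $G$-equivariant irreducible local system $\mathcal{M}$, and conversely the intermediate extension $j_{!*}$ along the inclusion $j:O\hookrightarrow \overline{O}$ yields a unique simple perverse extension $IC(\overline{O},\mathcal{M})$ from any such pair $(O,\mathcal{M})$. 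Transporting through Riemann--Hilbert identifies this with the intersection homology $\D$-module $\mathcal{L}(O,\mathcal{M},V)$, establishing the stated bijection.

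For part (b), fix a base point $x_0\in O$ with stabilizer $K$, so that $O\cong G/K$. A $G$-equivariant local system $\mathcal{M}$ on $O$ is determined by the fiber $\mathcal{M}_{x_0}$ together with the induced algebraic $K$-action, so the functor $\mathcal{M}\mapsto \mathcal{M}_{x_0}$ is an equivalence between the category of $G$-equivariant local systems on $O$ and the category of finite-dimensional algebraic representations of $K$ on the stalk of the underlying covering space. Since that stalk is a finite (discrete) set and $K^0$ is connected, any algebraic action of $K^0$ on it is trivial, so the representation factors through the component group $K/K^0$. Conversely, any representation of $K/K^0$ inflates to $K$ and induces a $G$-equivariant local system via $G\times_K(-)$, and these constructions are mutually inverse. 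Irreducibility is preserved on both sides, giving the second bijection.

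The principal obstacle is the equivariant Riemann--Hilbert correspondence itself, whose construction requires the full machinery of equivariant derived categories in the style of Bernstein--Lunts or the equivariant $\tau$-data formalism used in \cite{hotta2007d}. Because that foundational result is already available as \cite[Theorem 11.6.1]{hotta2007d}, the present proof reduces to quoting it and verifying that the image of a pair $(O,\mathcal{M})$ under the correspondence is precisely $\mathcal{L}(O,\mathcal{M},V)$, which is immediate from the definition of the intersection homology $\D$-module.
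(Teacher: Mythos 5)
The paper does not prove this theorem: it is quoted verbatim as \cite[Theorem 11.6.1]{hotta2007d}, so there is no ``paper proof'' to compare against. Your own write-up candidly admits at the end that the argument reduces to citing that reference, which is exactly what the paper does, so in spirit you and the paper are aligned.

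That said, if your sketch is taken on its own as a proof, part (b) contains a genuine gap. You assert that the functor $\mathcal{M}\mapsto\mathcal{M}_{x_0}$ factors through $K/K^0$ ``since that stalk is a finite (discrete) set and $K^0$ is connected, any algebraic action of $K^0$ on it is trivial.'' But the stalk of a local system is a finite-dimensional $\C$-vector space, not a finite discrete set, and connected algebraic groups certainly admit nontrivial algebraic representations on vector spaces. The correct reason is different: because $G$ is connected, the pullback of a $G$-equivariant local system to $G$ along $G\to G/K$ is a constant sheaf, and the right $K$-action on the stalk $\mathcal{M}_{x_0}$ arising from equivariance is a morphism of local systems, hence a \emph{locally constant} function $K\to\mathrm{GL}(\mathcal{M}_{x_0})$; equivalently, one reads this off from the exact sequence $\pi_1(G)\to\pi_1(G/K)\to\pi_0(K)\to\pi_0(G)=1$, which shows that $G$-equivariance forces the monodromy to factor through $\pi_0(K)=K/K^0$. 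Either of these fixes makes your argument correct. Part (a) is fine as sketched: finite orbits give an algebraic Whitney stratification, simple equivariant perverse sheaves are intermediate extensions of irreducible equivariant local systems on orbits, and Riemann--Hilbert carries these to the intersection homology $\D$-modules $\mathcal{L}(O,\mathcal{M},V)$.
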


The group $K/K^0$ is called the \defi{component group} of $O$. By this theorem, in order to determine the number of simple $\tn{GL}$-equivariant $\D_V$-modules supported on each orbit closure, we just need to compute the component group corresponding to each orbit. We begin Section \ref{Category} with this analysis.

Let $Z\subseteq V$ be closed and let $U=V\setminus Z$, with open immersion $j:U\hookrightarrow V$. Write $j_{\ast}$ and $j^{\ast}$ to denote the direct and inverse image functors of quasi-coherent sheaves. These functors restrict to functors between $\textnormal{mod}(\D_U)$ and $\textnormal{mod}(\D_V)$, and $j_{\ast}$ is right adjoint to $j^{\ast}$. The adjunction gives a map $M\to j_{\ast}j^{\ast}M$, yielding an exact sequence and isomorphisms
\begin{equation}\label{locSES}
0\longrightarrow \mathcal{H}^0_Z(V,M)\longrightarrow M\longrightarrow j_{\ast}j^{\ast}M\longrightarrow \mathcal{H}^1_Z(V,M)\longrightarrow 0,\;\;\;\;\;R^kj_{\ast}(j^{\ast}M)\cong \mathcal{H}^{k+1}_Z(V,M),
\end{equation}
for $k\geq 1$, where $\mathcal{H}_Z^i(V,M)$ denotes the $i$-th local cohomology of $M$ with support in $Z$, and $R^kj_{\ast}$ denotes the $k$-th derived functor of $j_{\ast}$. Note that for $M=\mathcal{O}_V$ we have
\begin{equation}\label{codim}
\mathcal{H}^j_Z(V,\mathcal{O}_V)=0\;\;\tn{ for all $j<\textnormal{codim}(Z,V)$,}\;\;\;\tn{and}\;\;\;\mathcal{H}^{\textnormal{codim}(Z,V)}_Z(V,\mathcal{O}_V)\neq 0.
\end{equation}
We recall the following general fact (see, for example \cite[Lemma 3.11]{lHorincz2018categories} or \cite[Page 9]{raicu2017characters}):

\begin{prop}\label{IHmod}
Using the notation above, set $c=\textnormal{codim}(Z,V)$. The intersection homology $\D$-module $\mathcal{L}(Z,V)$ is the unique simple submodule of $\mathcal{H}^c_Z(V,\mathcal{O}_V)$, and all composition factors of $\mathcal{H}^c_Z(V,\mathcal{O}_V)/\mathcal{L}(Z,V)$ have support contained in the singular locus of $Z$. Further, all composition factors of $\mathcal{H}^j_Z(V,\mathcal{O}_V)$ for $j>c$ have support contained in the singular locus of $Z$.
\end{prop}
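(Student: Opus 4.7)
The plan is to reduce all three statements to a computation of local cohomology along a smooth subvariety via flat base change along the open immersion $k:U:=V\setminus Z^{\mathrm{sing}}\hookrightarrow V$. Since $Z\cap U$ is a smooth closed subvariety of $U$ of codimension $c$, Kashiwara's theorem gives that $\mathcal{H}^c_{Z\cap U}(U,\mathcal{O}_U)$ is the simple $\D_U$-module obtained as the direct image of $\mathcal{O}_{Z\cap U}$, while $\mathcal{H}^j_{Z\cap U}(U,\mathcal{O}_U)=0$ for $j\neq c$. Local cohomology commutes with restriction to open subsets, so $k^*\mathcal{H}^j_Z(V,\mathcal{O}_V)\cong \mathcal{H}^j_{Z\cap U}(U,\mathcal{O}_U)$. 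Moreover, since $\mathcal{L}(Z,V)$ is characterized by the property that its restriction to the smooth locus of $Z$ is the constant local system, one has $k^*\mathcal{L}(Z,V)\cong \mathcal{H}^c_{Z\cap U}(U,\mathcal{O}_U)$.

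The third assertion follows at once: for $j>c$, the module $\mathcal{H}^j_Z(V,\mathcal{O}_V)$ vanishes after restriction to $U$, so each of its composition factors is supported on $V\setminus U=Z^{\mathrm{sing}}$. Granting the embedding $\mathcal{L}(Z,V)\hookrightarrow \mathcal{H}^c_Z(V,\mathcal{O}_V)$ produced below, the quotient $\mathcal{H}^c_Z(V,\mathcal{O}_V)/\mathcal{L}(Z,V)$ likewise vanishes on $U$, giving the second assertion.

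The main obstacle is the first assertion. The essential point is that $\mathcal{H}^c_Z(V,\mathcal{O}_V)$ has no nonzero submodule supported on $Z^{\mathrm{sing}}$; the hard part is producing this vanishing. To this end I would use the Grothendieck spectral sequence for the composition $\Gamma_{Z^{\mathrm{sing}}}\circ\Gamma_Z=\Gamma_{Z^{\mathrm{sing}}}$:
\[
E_2^{p,q}=\mathcal{H}^p_{Z^{\mathrm{sing}}}\bigl(V,\mathcal{H}^q_Z(V,\mathcal{O}_V)\bigr)\Longrightarrow \mathcal{H}^{p+q}_{Z^{\mathrm{sing}}}(V,\mathcal{O}_V).
\]
By (\ref{codim}), $\mathcal{H}^q_Z(V,\mathcal{O}_V)=0$ for $q<c$, so in total degree $c$ the only potentially nonzero term on the $E_2$-page is $E_2^{0,c}$; the differentials into and out of this term vanish for degree reasons, so $E_2^{0,c}=E_\infty^{0,c}$. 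Since $\mathrm{codim}(Z^{\mathrm{sing}},V)>c$, (\ref{codim}) also yields $\mathcal{H}^c_{Z^{\mathrm{sing}}}(V,\mathcal{O}_V)=0$, forcing $\mathcal{H}^0_{Z^{\mathrm{sing}}}(V,\mathcal{H}^c_Z(V,\mathcal{O}_V))=0$, which is the desired vanishing.

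As $\mathcal{H}^c_Z(V,\mathcal{O}_V)$ is holonomic, it has finite length, hence a nonzero socle. By the previous paragraph, no simple summand of the socle is supported on $Z^{\mathrm{sing}}$, so every simple summand has full support $Z$ and is therefore isomorphic to $\mathcal{L}(Z,V)$ by the characterization on the smooth locus. Restricting the socle to $U$ embeds it into the simple $\D_U$-module $k^*\mathcal{L}(Z,V)$, forcing the socle to consist of exactly one copy of $\mathcal{L}(Z,V)$. This is the unique simple submodule, completing the argument.
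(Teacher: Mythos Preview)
The paper does not actually prove this proposition; it simply records it as a known fact, citing \cite[Lemma 3.11]{lHorincz2018categories} and \cite[Page 9]{raicu2017characters}. Your argument is a correct, self-contained proof of the statement. The spectral-sequence step is the key nontrivial input: from the vanishing $\mathcal{H}^q_Z(V,\mathcal{O}_V)=0$ for $q<c$ and $\mathcal{H}^c_{Z^{\mathrm{sing}}}(V,\mathcal{O}_V)=0$ you correctly extract $\mathcal{H}^0_{Z^{\mathrm{sing}}}\bigl(\mathcal{H}^c_Z(V,\mathcal{O}_V)\bigr)=0$, which is precisely what guarantees that the socle is detected faithfully on $U=V\setminus Z^{\mathrm{sing}}$.

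One small expository remark: the sentence ``every simple summand has full support $Z$ and is therefore isomorphic to $\mathcal{L}(Z,V)$ by the characterization on the smooth locus'' is slightly hasty as written, since a simple with support $Z$ could in principle correspond to a nontrivial local system on the smooth locus. You immediately supply the real reason in the next sentence---restricting the socle to $U$ embeds it in the \emph{simple} module $k^*\mathcal{H}^c_Z(V,\mathcal{O}_V)\cong k^*\mathcal{L}(Z,V)$, which simultaneously forces the local system to be trivial and the multiplicity to be one---so the logic is sound. You might consider merging these two sentences so that the justification appears where the claim is first made.
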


\noindent This will be used in Section \ref{Local} to compute local cohomology of simple objects with support in orbit closures, in conjunction with homological techniques such as spectral sequences and long exact sequences. Since we are only working with cohomology on a single (affine) space $V$, we will write $H^j_Z(M)=\mathcal{H}^j_Z(V,M)$ throughout. 

We now discuss two functors that will be crucial in our study of $\textnormal{mod}_{\tn{GL}}(\D)$. Let $V=A\otimes B\otimes C$ and $\tn{GL}$ be as above. There is a self-equivalence of categories $\mathcal{F}$ on $\textnormal{mod}_{\tn{GL}}(\D_V)$, given by $\mathcal{F}(M)=M^{\ast}\otimes_{\C} \textnormal{det}(V)$ (see \cite[Section 4.3]{lHorincz2018categories}). We will refer to this functor as the Fourier Transform. In our situation, $\textnormal{det}(V)=\bS_{(4,4)}A\otimes \bS_{(4,4)}B\otimes \bS_{(4,4)}C$. By abuse of notation, consider the function $\mathcal{F}:\mathbb{Z}^2 \to \mathbb{Z}^2$ given by 
$$
\mathcal{F}(\lambda)=\lambda^{\ast}+(4,4)=(-\lambda_2+4,-\lambda_1+4).
$$
This induces a function $\mathcal{F}:\Gamma(\tn{GL})\to \Gamma(\tn{GL})$ given by
\begin{equation}\label{fourier}
\mathcal{F}\left(\sum_{(\lambda, \mu,\nu)\in \Lambda} a_{\lambda, \mu,\nu}\cdot \left[\bS_{\lambda}A\otimes \bS_{\mu}B\otimes \bS_{\nu}C\right]\right)
=\sum_{(\lambda, \mu,\nu)\in \Lambda} a_{\lambda, \mu,\nu}\cdot \left[\bS_{\mathcal{F}(\lambda)}A\otimes \bS_{\mathcal{F}(\mu)}B\otimes \bS_{\mathcal{F}(\nu)}C\right].
\end{equation}
Notice that for any object $M$ of $\textnormal{mod}_{\tn{GL}}(\D_V)$, we have $[\mathcal{F}(M)]=\mathcal{F}([M])$. In general, if $M$ is simple, then $\mathcal{F}(M)$ will also be simple. For example, the Fourier transform of $S$ is $E=\textnormal{Sym}(V)\otimes \textnormal{det}(V)$. This allows us to compute multiplicities $\langle [E], [\bS_{\lambda}A\otimes \bS_{\mu}B\otimes \bS_{\nu}C]\rangle$. We immediately conclude the following:
\begin{lemma}\label{notE}
The representation $\bS_{(4,4)}A\otimes \bS_{(4,4)}B\otimes \bS_{(4,4)}C$ has multiplicity one in $E$, and for $a\leq 3$ the representation $\bS_{(a,a)}A\otimes \bS_{(a,a)}B\otimes \bS_{(a,a)}C$ does not appear in $E$. Finally, the representations $\bS_{(3,1)}A\otimes \bS_{(2,2)}B\otimes \bS_{(2,2)}C$, $\bS_{(2,2)}A\otimes \bS_{(3,1)}B\otimes \bS_{(2,2)}C$, and $\bS_{(2,2)}A\otimes \bS_{(2,2)}B\otimes \bS_{(3,1)}C$ do not appear in $E$.
\end{lemma}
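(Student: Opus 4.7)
The plan is to compute the multiplicities in $E$ by pulling them back to multiplicities in $S$ via the Fourier transform identity $[E] = \mathcal{F}([S])$, recorded in the paragraph containing (\ref{fourier}). Since $\mathcal{F}(\pi_1,\pi_2) = (4-\pi_2, 4-\pi_1)$ is an involution on $\mathbb{Z}^2_{\textnormal{dom}}$, (\ref{fourier}) implies
\[
\langle [E], [\bS_\lambda A \otimes \bS_\mu B \otimes \bS_\nu C]\rangle
= \langle [S], [\bS_{\mathcal{F}(\lambda)} A \otimes \bS_{\mathcal{F}(\mu)} B \otimes \bS_{\mathcal{F}(\nu)} C]\rangle.
\]
The key observation, coming from (\ref{charS}), is that an irreducible $\bS_\pi A \otimes \bS_\sigma B \otimes \bS_\tau C$ can occur in $S$ only if every entry of each of $\pi, \sigma, \tau$ is nonpositive, since $S = \textnormal{Sym}(V^{\ast})$ decomposes as a sum of polynomial representations of $A^{\ast}, B^{\ast}, C^{\ast}$ indexed by $(\lambda,\mu,\nu) \in \Lambda_+$.

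For the triples $(a,a) \times (a,a) \times (a,a)$, I would compute $\mathcal{F}(a,a) = (4-a, 4-a)$: for $a \leq 3$ the entries are strictly positive so the multiplicity in $S$, and hence in $E$, is zero; for $a = 4$ all three factors become $(0,0)$, matching the trivial summand $\mathbb{C} \subseteq S$ of multiplicity one (by (\ref{charS}) with $d=0$), so the multiplicity in $E$ is one. For the three weights $(3,1) \times (2,2) \times (2,2)$ and its symmetrizations, I would note that $(3,1)$ and $(2,2)$ are each fixed by $\mathcal{F}$, so the Fourier-transformed triple coincides with the original; since $(3,1)$ has a positive entry, no such triple appears in $S$, forcing multiplicity zero in $E$.

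There is no substantial obstacle; the whole argument is bookkeeping with $\mathcal{F}$ and the signs of weight entries. The same conclusion can also be reached directly from $E = \textnormal{Sym}(V) \otimes \textnormal{det}(V)$: tensoring with $\textnormal{det}(V) = \bS_{(4,4)}A \otimes \bS_{(4,4)}B \otimes \bS_{(4,4)}C$ shifts every weight of $\textnormal{Sym}(V)$ up by $(4,4)$ componentwise, so every weight appearing in $E$ must have all entries at least $4$, which rules out the listed representations except for $\bS_{(4,4)}A \otimes \bS_{(4,4)}B \otimes \bS_{(4,4)}C$ itself, coming from the constant summand of $\textnormal{Sym}(V)$.
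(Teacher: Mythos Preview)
Your proposal is correct and matches the paper's approach exactly: the paper states the lemma immediately after recording $E=\mathcal{F}(S)=\textnormal{Sym}(V)\otimes\textnormal{det}(V)$ and the formula (\ref{fourier}), with the words ``We immediately conclude the following'', so the intended argument is precisely your bookkeeping with $\mathcal{F}$ (equivalently, the weight shift by $(4,4)^3$). Your alternative direct argument via the shift is the same observation phrased without the involution.
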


There is another functor on $\textnormal{mod}_{\tn{GL}}(\D)$ that permutes simple objects, the holonomic duality functor $\mathbb{D}$ (see \cite[Section 2.6]{hotta2007d}). The duality functor is an equivalence of categories between $\textnormal{mod}_{\tn{GL}}(\D)$ and $\textnormal{mod}_{\tn{GL}}(\D)^{\textnormal{op}}$, defined via
$$
\mathbb{D}(M)=\mathscr{E}\E\textnormal{ }^8_{\D_V}(M,\D_V)\otimes_{\mathcal{O}_V} \omega_V^{-1},
$$
where $8=\dim V$ and $\omega_V$ is the canonical bundle. Via the Riemann-Hilbert correspondence, the holonomic duality functor is the Verdier duality functor on perverse sheaves. Thus, for a simple $\D$-module $M$ corresponding to an irreducible local system $\mathcal{M}$ on an orbit $O$, the duality functor sends $M$ to the $\D$-module corresponding to the dual local system $\mathcal{M}^{\ast}$.

\subsection{Representations of quivers}\label{PrelimQuiver}

A quiver $\mathcal{Q}=(\mathcal{Q}_0,\mathcal{Q}_1)$ is an oriented graph with a finite set of vertices $\mathcal{Q}_0$ and a finite set of arrows $\mathcal{Q}_1$. An arrow $\alpha\in \mathcal{Q}_1$ has source $s(\alpha)\in \mathcal{Q}_0$ and a target $t(\alpha)\in \mathcal{Q}_0$. A directed path $p$ in $\mathcal{Q}$ from $a$ to $b$ is a sequence of arrows $\alpha_1,\cdots, \alpha_k$ such that $s(\alpha_1)=a$, $t(\alpha_k)=b$, and $s(\alpha_i)=t(\alpha_{i-1})$. A relation in $\mathcal{Q}$ is a linear combination of paths of length at least two having the same source and target. A quiver with relations $(\mathcal{Q},\mathcal{I})$ is a quiver $\mathcal{Q}$ together with a finite set of relations $\mathcal{I}$. A finite-dimensional representation $W$ of a quiver with relations $(\mathcal{Q},\mathcal{I})$ is a collection of finite-dimensional vector spaces $\{W_a\mid a\in \mathcal{Q}_0\}$ indexed by $\mathcal{Q}_0$, along with a set of linear maps $\{W(\alpha):W_{s(\alpha)}\to W_{t(\alpha)}\mid \alpha\in \mathcal{Q}_1\}$ satisfying the relations in $\mathcal{I}$.

The category $\textnormal{mod}_{\tn{GL}}(\D)$ is equivalent to the category of finite-dimensional representations of a quiver with relations $(\mathcal{Q},\mathcal{I})$ (for the most direct proof in our situation, see \cite[Proposition 2.5]{lHorincz2018categories}). We now gather a few facts for later, to be used when we determine the quiver. Given $M\in \textnormal{mod}_{\tn{GL}}(\D)$, write $\mathcal{W}^M$ for the corresponding representation of $(\mathcal{Q},\mathcal{I})$.
\begin{enumerate}
\item There is a one-to-one correspondence between simple objects in $\textnormal{mod}_{\tn{GL}}(\D)$ and vertices of $\mathcal{Q}$.

\item If $M\in \textnormal{mod}_{\tn{GL}}(\D)$ and $N$ is a simple composition factor of $M$ with multiplicity $d$, then the representation $\mathcal{W}^M\in \textnormal{rep}(\mathcal{Q},\mathcal{I})$ has  $\dim \mathcal{W}^M_n=d$, where $n$ is the vertex corresponding to $N$. 

\item If $M$ and $N$ are simple objects in $\textnormal{mod}_{\tn{GL}}(\D)$, corresponding to vertices $m$ and $n$ in $\mathcal{Q}$, then the number of arrows from $m$ to $n$ is equal to $\dim_{\C}\textnormal{Ext}^1_{\mathcal{D}}(M,N)$.

\item Recall the Fourier transform $\mathcal{F}$ and the holonomic duality functor $\mathbb{D}$ from Section 2.2. Given a vertex $m$ of $\mathcal{Q}$ corresponding to a simple module $M$, write $\mathcal{F}(m)$ (resp, $\mathbb{D}(m)$) for the vertex of $\mathcal{Q}$ corresponding to $\mathcal{F}(M)$ (resp. $\mathbb{D}(M)$). If $n$ is a vertex of $\mathcal{Q}$ corresponding to a simple module $N$ in $\textnormal{mod}_{\tn{GL}}(\D)$, then the number of arrows from $m$ to $n$ is equal to the number of arrows from $\mathbb{D}(n)$ to $\mathbb{D}(m)$ and the number of arrows from $\mathcal{F}(m)$ to $\mathcal{F}(n)$.

\item Let $I^M$ be the injective hull in $\tn{mod}_{\tn{GL}}(\D)$ of a simple module $M$, and let $N$ be another simple module. The number of paths from $n$ to $m$ is equal to the multiplicity of $N$ as a composition factor of $I^M$. Dually, if $P^M$ is the projective cover of $M$, the number of paths from $m$ to $n$ is equal to the multiplicity of $N$ as a composition factor of $P^M$.
\end{enumerate}

\subsection{Witness weights}\label{PrelimWit}

In order to determine the quiver structure of the category $\textnormal{mod}_{\tn{GL}}(\D_V)$, an important tool will be to know a weight unique to each simple equivariant $\D$-module $M$. In other words, for such $M$ we want to find $(\lambda,\mu,\nu)\in \Lambda$ such that $\langle [M], [\bS_{\lambda}A\otimes \bS_{\mu}B\otimes \bS_{\nu}C]\rangle \neq 0$ and $\langle [N], [\bS_{\lambda}A\otimes \bS_{\mu}B\otimes \bS_{\nu}C]\rangle =0$ for all simple equivariant $\D$-modules $N\neq M$. We will call these \defi{witness weights}. Recall the simple modules $D_0=E$, $D_1$, $D_{1,2,2}$, $D_{2,1,2}$, $D_{2,2,1}$, $D_5$, $S$, and $G_6$ from the Classification of Simple Modules in the introduction. In this section, we discuss how to obtain (any) weights in $D_1$ and $D_{i,j,k}$ for $(i,j,k)=(1,2,2)$, $(2,1,2)$, and $(2,2,1)$. We determine the witness weights in Section \ref{Witness}.

To obtain weights of $D_1=\mathcal{L}(O_1,V)$, we will push forward the structure sheaf of $O_1$ from a desingularization of $\overline{O_1}$. The desingularization will be a homogeneous vector bundle on a product of projective spaces, and the problem of computing the desired weights will reduce to computing Euler characteristics of vector bundles on this product (as admissible representations). This is the technique used by Raicu to compute characters of simple equivariant $\D$-modules on Veronese cones and spaces of matrices (generic, symmetric, skew-symmetric) \cite{raicu2016characters, raicu2017characters}.

The setup is as follows: Let $X=\mathbb{P}(A)\times \mathbb{P}(B)\times \mathbb{P}(C)$ (we write $\mathbb{P}(A)$ for the variety of one-dimensional subspaces in $A$), with projections $p_W:X\to \mathbb{P}(W)$ for $W=A$, $B$, $C$, and let $Y=\text{Tot}_X(p_A^{\ast}\mathcal{O}(1)\otimes p_B^{\ast}\mathcal{O}(1)\otimes p_C^{\ast}\mathcal{O}(1))$. Consider the following diagram:
$$
\begin{tikzcd}
Y \arrow[r, hook, "s"] \arrow[dr, "\pi"] & V\times X\arrow[d, "p"]\\
& V
\end{tikzcd}
$$
where $s$ is the inclusion, $p$ is the projection, and $\pi=p\circ s$ is the composition. Then $Y$ is a desingularization of $\overline{O_1}$ with $\pi^{-1}(O_1)\cong O_1$. Indeed, $\pi^{-1}(O_0)\cong X$ and for all nonzero pure tensors $v=a\otimes b\otimes c\in O_1$, we have $\pi^{-1}(v)=(v,(\langle a \rangle,\langle b\rangle, \langle c \rangle))\in Y$. Using notation from \cite[Chapter 1.5]{hotta2007d}, we write $\int_{\pi}\mathcal{O}_{O_1}$ for the $\mathcal{D}$-module direct image from $Y$ of the structure sheaf $\mathcal{O}_{O_1}$, and we write $\int_{\pi}^j\mathcal{O}_{O_1}$ for its $j$-th cohomology. Since $\pi$ induces a birational isomorphism from $Y$ to $\overline{O_1}$ away from the origin, it follows that the cohomology of $\int_{\pi}\mathcal{O}_{O_1}$ will have support contained in $\overline{O_1}$, and $D_1$ will appear. Thus, we may obtain information about the weights that appear in $D_1$ from knowledge of the weights that appear in $D_0=E$ and the $\tn{GL}$-admissible Euler characteristic:
\begin{equation}\label{admissibleEuler}
\left[\chi\left(\int_{\pi}\mathcal{O}_{O_1}\right)\right]=\sum_{j\in \mathbb{Z}} (-1)^j\left[ \int_{\pi}^k\mathcal{O}_{O_1}\right].
\end{equation}

By \cite[Proposition 2.10]{raicu2016characters},we have the following in $\Gamma(\tn{GL})$:
\begin{equation}\label{eulerChar}
\left[ \chi\left(\int_{\pi}\mathcal{O}_{O_1}\right)\right]=\lim_{r\to \infty}\left(\sum_{i=0}^{3}(-1)^{3-i}\cdot [\chi(X,\Omega^i_X\otimes \mathcal{L}^r)\otimes  E]\right),
\end{equation}
where $\mathcal{L}=p_A^{\ast}\mathcal{O}(-1)\otimes p_B^{\ast}\mathcal{O}(-1)\otimes p_C^{\ast}\mathcal{O}(-1)$. For the remainder of the section, recall the convention (\ref{bott}). Let $[2]=\{1,2\}$ and let $\binom{[2]}{1}$ denote the set of subsets of $[2]$ of size one. In other words, $\binom{[2]}{1}=\{\{1\},\{2\}\}$. For $r\in \mathbb{Z}$ and $I\in \binom{[2]}{1}$, write $(r^I)\in \mathbb{Z}^2$ for the tuple with $r$ in the $I$-th place and zero elsewhere. If $I=\{1\}$, then $(r^I)=(r,0)$ and if $I=\{2\}$, then $(r^I)=(0,r)$. By \cite[Lemma 2.5]{raicu2016characters}, we have that
$$
\sum_{i=0}^{3}(-1)^{3-i}\cdot [\chi(X,\Omega^i_X\otimes \mathcal{L}^r)]=-[p(V)],\;\;\textnormal{where }\;[p(V)]=\sum_{I,J,K\in \binom{[2]}{1}} [\bS_{(r^I)}A\otimes \bS_{(r^J)}B\otimes \bS_{(r^K)}C].
$$
Given $\lambda\in \mathbb{Z}^2_{\textnormal{dom}}$, write $\lambda(r,I)=\lambda+(r^I)$. Let $(\lambda, \mu,\nu)\in \Lambda$. Combining the above, we obtain
$$
\sum_{i=0}^{3}(-1)^{3-i}\cdot \left\langle[\chi(X,\Omega^i_X\otimes \mathcal{L}^r)\otimes E], [\bS_{\lambda}A\otimes \bS_{\mu}B\otimes \bS_{\nu}C]\right\rangle =- \langle [p(V)]\otimes [E], [\bS_{\lambda}A\otimes \bS_{\mu}B\otimes \bS_{\nu}C]\rangle
$$
$$
=-\sum_{I,J,K\in \binom{[2]}{1}}\langle [\textnormal{Sym}(V)], [\bS_{(\lambda_1-4,\lambda_2-4)(r,I)}A\otimes \bS_{(\mu_1-4,\mu_2-4)(r,J)}B\otimes \bS_{(\nu_1-4,\nu_2-4)(r,K)}C]\rangle,
$$
where the second equality follows from \cite[Lemma 2.3]{raicu2016characters}. We summarize with the following:

\begin{lemma}\label{eulerForm}
The multiplicity of $[\bS_{\lambda}A\otimes \bS_{\mu}B\otimes \bS_{\nu}C]$ in $[\chi(\int_{\pi}\mathcal{O}_{O_1})]$ is given by
$$
\lim_{r\to \infty}\left( -\sum_{I,J,K\in \binom{[2]}{1}}\langle [\textnormal{Sym}(V)], [\bS_{(\lambda_1-4,\lambda_2-4)(r,I)}A\otimes \bS_{(\mu_1-4,\mu_2-4)(r,J)}B\otimes \bS_{(\nu_1-4,\nu_2-4)(r,K)}C]\rangle\right),
$$
where $\pi(r,I)=\pi+(r^I)$ for $\pi\in \mathbb{Z}^2_{\tn{dom}}$.
\end{lemma}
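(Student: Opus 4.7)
The substantive computational work is already visible in the paragraphs preceding the statement; the plan is simply to combine the existing pieces and pass to the limit. Starting from identity (\ref{eulerChar}), which identifies $[\chi(\int_\pi \mathcal{O}_{O_1})]$ as a limit in $\Gamma(\tn{GL})$ of the twisted Euler characteristics $\sum_{i=0}^3 (-1)^{3-i}[\chi(X,\Omega_X^i \otimes \mathcal{L}^r) \otimes E]$, the first step would be to invoke the definition of convergence in $\Gamma(\tn{GL})$ from (\ref{limitForm}): the multiplicity of $[\bS_\lambda A \otimes \bS_\mu B \otimes \bS_\nu C]$ in the limit equals the eventual (hence limiting) value of its multiplicity in the approximating sequence. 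This reduces the problem to computing the multiplicity in the $r$-th term and then taking $r\to\infty$.

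For the $r$-th term, the plan is to apply the two cited results from \cite{raicu2016characters} in sequence. First, \cite[Lemma 2.5]{raicu2016characters} collapses the alternating sum $\sum_{i=0}^3 (-1)^{3-i}[\chi(X,\Omega_X^i\otimes \mathcal{L}^r)]$ to $-[p(V)]$, which is an explicit sum of eight terms indexed by $(I,J,K)\in \binom{[2]}{1}^3$. Tensoring with $[E] = [\tn{Sym}(V) \otimes \tn{det}(V)]$ and reading off the multiplicity of $[\bS_\lambda A \otimes \bS_\mu B \otimes \bS_\nu C]$ converts, via \cite[Lemma 2.3]{raicu2016characters}, to a multiplicity in $[\tn{Sym}(V)]$ at a weight shifted by the weight $(-4,-4)^3$ of $\tn{det}(V)$ (since $\tn{det}(V) = \bS_{(4,4)}A \otimes \bS_{(4,4)}B \otimes \bS_{(4,4)}C$) together with the $(r^I)$-shifts coming from the eight summands of $[p(V)]$. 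The resulting expression is exactly the quantity under the limit in the statement, and passing to $r\to\infty$ yields the formula.

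The only subtlety worth flagging — and it is not really an obstacle — is that the shifted weights $(\lambda_j - 4,\lambda_j' - 4)(r,I)$ may fail to be dominant for small $r$, which is why convention (\ref{bott}) was set up in Section \ref{RepTheory}. One can either bookkeep the sign corrections using that convention and verify that they vanish in the relevant limits, or, more cleanly, observe that for $r$ sufficiently large (depending on the fixed $\lambda,\mu,\nu$) all shifted weights have strictly decreasing entries, so \cite[Lemma 2.3]{raicu2016characters} applies verbatim without any Bott-type corrections. Either way, since we care only about the limit, this point is easily disposed of and the claim follows.
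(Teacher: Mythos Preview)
Your proposal is correct and follows exactly the paper's approach: the lemma is explicitly introduced there with ``We summarize with the following,'' and the preceding displays combine (\ref{eulerChar}), \cite[Lemma 2.5]{raicu2016characters}, and \cite[Lemma 2.3]{raicu2016characters} in precisely the order you describe.

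One small correction to your final paragraph: your second alternative for handling the non-dominance issue is wrong. When $I=\{2\}$ the shifted weight is $(\lambda_1-4,\,\lambda_2-4+r)$, whose second entry dominates the first for large $r$, so these terms are \emph{never} dominant as $r\to\infty$. Convention (\ref{bott}) is not optional here; it is what makes the formula meaningful, and the resulting sign flip is exactly what is used in the proof of Lemma \ref{witD1} (where only the $I=J=K=\{2\}$ term survives, and the Bott sign $(-1)^3$ cancels the overall $-1$). Your first alternative---simply carrying convention (\ref{bott}) through the computation---is the right one, though note that the sign corrections do not ``vanish in the limit''; they are part of the stated formula.
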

\noindent We will use this in the proof of Lemma \ref{witD1}.

Next, we find some weights that appear in $D_{i,j,k}=\mathcal{L}(\overline{O_{i,j,k}},V)$ with multiplicity one. Let $W_1$ and $W_2$ be complex vector spaces of dimensions $n$ and $m$ respectively, and consider $W_1\otimes W_2$ the space of $n\times m$ matrices. This space has a natural action of $\textnormal{GL}(W_1)\times \textnormal{GL}(W_2)$ with $k+1$ orbits, where $k=\min(m,n)$. In this case, the component groups corresponding to each orbit are trivial, and by Theorem \ref{eqRH}, there are $k+1$ simple $\textnormal{GL}(W_1)\times \textnormal{GL}(W_2)$-equivariant $\D_{W_1\otimes W_2}$-modules. C. Raicu has computed the $\textnormal{GL}(W_1)\times \textnormal{GL}(W_2)$ structure of these simple modules \cite{raicu2016characters}. We will use these computations to obtain information about the $\textnormal{GL}(A)\times \textnormal{GL}(B)\times \textnormal{GL}(C)$ structure of our simple modules $D_{i,j,k}$ for $(i,j,k)=(1,2,2)$, $(2,1,2)$, and $(2,2,1)$. In particular, we will obtain the witness weights from these previous computations.

The space $V=A\otimes B\otimes C$ may be identified with the space of $2\times 4$ matrices $A\otimes (B\otimes C)$. Under this identification, the orbit closure $\overline{O_{1,2,2}}$ is the determinantal variety of $2\times 4$ matrices of rank $\leq 1$. The similar results hold for $\overline{O_{2,1,2}}$ and $\overline{O_{2,2,1}}$. Let $\mathcal{A}=\{\lambda\in \mathbb{Z}^2_{\textnormal{dom}}\mid \lambda_1\geq 3,\lambda_2\leq 1\}$. Given $\lambda\in \mathcal{A}$, write $\lambda(1)=(\lambda_1-2,1,1,\lambda_2)\in \mathbb{Z}^4_{\textnormal{dom}}$. By \cite[Section 3.2]{raicu2016characters}, the simple $\D_V$-module $D_{1,2,2}$ decomposes as a representation of $\textnormal{GL}(A)\times \textnormal{GL}(B\otimes C)$ as follows:
\begin{equation}
D_{1,2,2}=\bigoplus_{\lambda\in \mathcal{A}} \bS_{\lambda} A\otimes \bS_{\lambda(1)}(B\otimes C)
\end{equation}
Similar decompositions hold for $D_{2,1,2}$ and $D_{2,2,1}$. Notice that if $\lambda=(3,1)$, then $\lambda(1)=(1,1,1,1)$. Therefore $\bS_{(3,1)}A\otimes \bS_{(1,1,1,1)}(B\otimes C)=\bS_{(3,1)}A\otimes \bS_{(2,2)}B\otimes \bS_{(2,2)}C$ appears in $D_{1,2,2}$ with multiplicity one. We conclude:

\begin{lemma}\label{multDet}
The following hold in $\Gamma(\tn{GL})$:
$$
\langle [D_{1,2,2}], [\bS_{(3,1)}A\otimes \bS_{(2,2)}B\otimes \bS_{(2,2)}C]\rangle=1,\;\;\langle [D_{2,1,2}], [\bS_{(2,2)}A\otimes \bS_{(3,1)}B\otimes \bS_{(2,2)}C]\rangle=1,
$$
$$
\langle [D_{2,2,1}], [\bS_{(2,2)}A\otimes \bS_{(2,2)}B\otimes \bS_{(3,1)}C]\rangle=1.
$$
In addition, for $(i,j,k)=(1,2,2)$, $(2,1,2)$, $(2,2,1)$ and $a\in \mathbb{Z}$, the representation $[\bS_{(a,a)}A\otimes \bS_{(a,a)}B\otimes \bS_{(a,a)}C]$ does not appear in $[D_{i,j,k}]$.
\end{lemma}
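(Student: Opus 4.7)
The plan is to invoke the $\tn{GL}(A)\times \tn{GL}(B\otimes C)$-decomposition of $D_{1,2,2}$ recalled from \cite[Section 3.2]{raicu2016characters} immediately before the statement, and branch the $\tn{GL}(B\otimes C)$-factor down to $\tn{GL}(B)\times \tn{GL}(C)$. This reduces everything to an elementary representation-theoretic identification of one Schur functor on a $4$-dimensional tensor product. The statements for $D_{2,1,2}$ and $D_{2,2,1}$ will follow from the identical argument after reflattening $V$ as $B\otimes (A\otimes C)$ and $C\otimes (A\otimes B)$, respectively.

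For the first multiplicity claim, I would take $\lambda=(3,1)\in \mathcal{A}$, which gives $\lambda(1)=(1,1,1,1)$, so the corresponding summand of the decomposition is
\[
\bS_{(3,1)}A\otimes \bS_{(1,1,1,1)}(B\otimes C).
\]
Since $\dim(B\otimes C)=4$, the Schur functor $\bS_{(1,1,1,1)}(B\otimes C)$ equals the top exterior power $\wedge^4(B\otimes C)=\det(B\otimes C)$. Using the standard identity $\det(U\otimes W)=\det(U)^{\dim W}\otimes \det(W)^{\dim U}$ with $\dim B=\dim C=2$, or equivalently the Cauchy formula restricted to partitions with at most $2$ parts (only $(2,2)$ survives), one obtains $\det(B\otimes C)=\bS_{(2,2)}B\otimes \bS_{(2,2)}C$. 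The condition $\lambda(1)=(1,1,1,1)$ forces $\lambda=(3,1)$ uniquely, so no other summand in the Raicu decomposition can contribute this weight, and the multiplicity is exactly one.

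For the non-appearance of $\bS_{(a,a)}A\otimes \bS_{(a,a)}B\otimes \bS_{(a,a)}C$ in $D_{1,2,2}$, I would observe that the $\tn{GL}(A)$-weights occurring in the decomposition are precisely those $\lambda\in \mathcal{A}$, i.e., those with $\lambda_1\geq 3$ and $\lambda_2\leq 1$. Since $(a,a)$ has $\lambda_1=\lambda_2=a$, membership in $\mathcal{A}$ would force $a\geq 3$ and $a\leq 1$ simultaneously, which is impossible. Hence the $\bS_{(a,a)}A$-isotypic component of $D_{1,2,2}$ is zero, and in particular the stated representation does not appear. The corresponding assertions for $D_{2,1,2}$ and $D_{2,2,1}$ follow verbatim.

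There is no real obstacle here: once Raicu's decomposition and the description of $\mathcal{A}$ are in hand, the argument is essentially bookkeeping, with the only nontrivial input being the identification $\wedge^4(B\otimes C)=\bS_{(2,2)}B\otimes \bS_{(2,2)}C$, which is routine.
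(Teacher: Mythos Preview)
Your proposal is correct and follows essentially the same approach as the paper: both invoke Raicu's decomposition, take $\lambda=(3,1)$ to get $\lambda(1)=(1,1,1,1)$, identify $\bS_{(1,1,1,1)}(B\otimes C)=\bS_{(2,2)}B\otimes\bS_{(2,2)}C$, and observe that $(a,a)\notin\mathcal{A}$ for the second assertion. Your write-up is slightly more detailed (spelling out the determinant identification and the uniqueness of $\lambda$), but the argument is the same.
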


\noindent The second assertion follows from the fact that, for all $a\in \mathbb{Z}$, the weight $(a,a)$ does not belong to the set $\mathcal{A}$.

\section{The category $\textnormal{mod}_{\tn{GL}}(\D_V)$ }\label{Category}
In this section we prove the main theorems. We begin by classifying the simple modules and computing the witness weights. We go on to determine the quiver structure of the category $\textnormal{mod}_{\tn{GL}}(\D)$.
\subsection{Component groups for the orbits}

By Theorem \ref{eqRH}, the simple objects in $\textnormal{mod}_{\tn{GL}}(\D)$ are in one-to-one correspondence with representations of the component groups of each orbit. We now compute the component groups, immediately yielding the first assertion of the Classification of Simple Modules. Note first that the component group corresponding to $O_1$ is trivial by \cite[Lemma 4.13]{lHorincz2018categories}, as $O_1$ is the orbit of the highest weight vector of $V$. For the following computations, we will consider an element of the group:
\begin{equation}\label{gel}
g=\left(X,Y,Z\right)=\left(\left( {\begin{array}{cc}
   x_{1,1} & x_{1,2} \\
   x_{2,1} & x_{2,2}
  \end{array} } \right),
\left( {\begin{array}{cc}
   y_{1,1} & y_{1,2} \\
   y_{2,1} & y_{2,2}
  \end{array} } \right),
 \left( {\begin{array}{cc}
   z_{1,1} & z_{1,2} \\
   z_{2,1} & z_{2,2} 
  \end{array} } \right)\right)\in \tn{GL}.
\end{equation}
Given an orbit $O$ and $v\in O$, we will determine what conditions are imposed on $x_{i,j}$, $y_{i,j}$, and $z_{i,j}$ if $g$ is in the isotropy of $v$. These equations are used to find the connected components of the isotropy.

\begin{lemma}
The $\tn{GL}$-isotropy subgroups for $a_1\otimes b_1\otimes c_1+a_1\otimes b_2\otimes c_2$, $a_1\otimes b_1\otimes c_1+a_2\otimes b_1\otimes c_2$, and $a_1\otimes b_1\otimes c_1+a_2\otimes b_2\otimes c_1$ are path connected. In particular, the component groups corresponding to $O_{1,2,2}$, $O_{2,1,2}$, and $O_{2,2,1}$ are trivial.
\end{lemma}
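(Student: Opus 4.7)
The plan is to compute the three isotropy subgroups explicitly and recognize each as a product of obviously connected algebraic groups (copies of $\tn{GL}_2$ and a Borel subgroup of $\tn{GL}_2$). Take the first vector, $v = a_1\otimes(b_1\otimes c_1 + b_2\otimes c_2)$, and view the ``inner'' summand as the identity matrix $I_2\in B\otimes C$. Applying $g=(X,Y,Z)$ as in \eqref{gel} sends $v$ to $(Xa_1)\otimes (Y\cdot I_2\cdot Z^T)=(Xa_1)\otimes (YZ^T)$. For this to equal $v$, we first need $Xa_1=\lambda a_1$ for some $\lambda\in\C^\times$ (forcing $x_{2,1}=0$ and $x_{1,1}=\lambda$, with $x_{2,2}\in\C^\times$ and $x_{1,2}\in\C$ free), and then $\lambda\cdot YZ^T=I_2$, i.e.\ $Z=\lambda^{-1}(Y^T)^{-1}$, which is determined by $(\lambda,Y)$.

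Thus the isotropy of $v$ is parameterized faithfully by the pair $(X,Y)$, where $X$ ranges over the upper-triangular Borel $B\subset\tn{GL}(A)$ and $Y$ ranges freely over $\tn{GL}(B)$. Since $B\cong \C^\times\times\C^\times\times\C$ is connected and $\tn{GL}_2(\C)$ is connected, the product $B\times \tn{GL}(B)$ is path connected, hence so is the isotropy. The remaining two vectors differ from $v$ only by permuting the roles of the three tensor factors $A,B,C$, so the identical computation (with the ``pinned'' line in a different factor) shows that their isotropy groups are also isomorphic to a product of a Borel with a copy of $\tn{GL}_2$, and are therefore path connected as well.

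In particular the isotropy group $K$ of each representative satisfies $K=K^0$, so the component group $K/K^0$ attached to each of the orbits $O_{1,2,2}$, $O_{2,1,2}$, $O_{2,2,1}$ is trivial. The only routine bookkeeping is keeping the matrix conventions straight when writing $YZ^T$ versus $YI_2Z^T$ and checking that the invertibility of $X$ is automatic from $\lambda,x_{2,2}\neq 0$; the heart of the argument is the explicit parameterization of the isotropy in the first paragraph, and I would not expect any substantive obstacle.
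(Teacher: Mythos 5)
Your proof is correct and essentially the same as the paper's: both pin the line in $A$ (forcing $x_{2,1}=0$ and putting $X$ in the upper-triangular Borel), observe that $Z$ is then uniquely determined by $x_{1,1}$ and $Y$ via $Y^T Z = x_{1,1}^{-1}\tn{Id}_2$, and conclude the isotropy is isomorphic to $(\C^\times)^2\times\C\times\tn{GL}_2(\C)$, which is connected; the paper just reaches this by writing out and simplifying the eight coefficient equations $f_{i,j,k}$, whereas you package the $B\otimes C$ factor as a $2\times 2$ matrix up front, which is tidier but not a different argument.
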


\begin{proof}
By symmetry, it suffices to prove the result for $O_{1,2,2}$. Let $v=a_1\otimes b_1\otimes c_1+a_1\otimes b_2\otimes c_2$ and $g\cdot v=\sum_{i,j,k}f_{i,j,k}a_i\otimes b_j\otimes c_k$, where $g$ is as in (\ref{gel}). If $g$ is in the isotropy of $v$, then $f_{1,1,1}=1$, $f_{1,2,2}=1$, and $f_{i,j,k}=0$ otherwise. Since $f_{1,1,1}=1$, we have that $y_{1,1}z_{1,1}+y_{1,2}z_{1,2}$ is nonzero. Thus, $x_{2,1}=0$, as $f_{2,1,1}=x_{2,1}(y_{1,1}z_{1,1}+y_{1,2}z_{1,2})$. We conclude that the system $f_{1,1,1}=1$, $f_{1,2,2}=1$, and $f_{i,j,k}=0$ ($(i,j,k)\neq (1,1,1)$, $(1,2,2)$) is equivalent to the vanishing of the following equations:
$$
x_{2,1},\;\;x_{1,1}y_{1,1}z_{1,1}+x_{1,1}y_{1,2}z_{1,2}-1,\;\; x_{1,1}y_{2,1}z_{2,1}+x_{1,1}y_{2,2}z_{2,2}-1,\;\;\; y_{1,1}z_{2,1}+y_{1,2}z_{2,2},\;\;\; y_{2,1}z_{1,1}+y_{2,2}z_{1,2}.
$$
Since $g\in \tn{GL}$, this forces $x_{1,1}$ to be nonzero, and the equations above imply that the matrix product $Y^{T}\cdot Z$ is $1/x_{1,1}\cdot \tn{Id}_2$, where $\tn{Id}_2$ is the $2\times 2$ identity matrix. Thus, $Z$ is determined by $x_{1,1}$ and $Y$. We conclude that the isotropy subgroup for $v$ is isomorphic to $(\mathbb{C}^{\ast})^2\times \mathbb{C}\times \tn{GL}_2(\mathbb{C})$, where the two copies of $\mathbb{C}^{\ast}$ correspond to the coordinates $x_{1,1}$ and $x_{2,2}$, the copy of $\mathbb{C}$ corresponds to $x_{1,2}$, and the copy of $\tn{GL}_2(\mathbb{C})$ corresponds to the $Y$ coordinates. Since $(\mathbb{C}^{\ast})^2\times \mathbb{C}\times \tn{GL}_2(\mathbb{C})$ is path connected, the result follows.
\end{proof}

\begin{lemma}\label{compO5}
The $\tn{GL}$-isotropy subgroup for $v=a_1\otimes b_1\otimes c_1+a_1\otimes b_2\otimes c_2+a_2\otimes b_1\otimes c_2$ is connected. In particular, the component group corresponding to $O_5$ is trivial
\end{lemma}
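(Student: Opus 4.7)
The plan is to follow exactly the strategy of the previous lemma: write $g=(X,Y,Z)\in \tn{GL}$ as in (\ref{gel}), expand $g\cdot v = \sum f_{i,j,k}\,a_i\otimes b_j\otimes c_k$, and translate the isotropy condition into a system of polynomial equations in the twelve matrix entries. Since $v = a_1\otimes b_1\otimes c_1 + a_1\otimes b_2\otimes c_2 + a_2\otimes b_1\otimes c_2$, a direct computation gives
\[
f_{i,j,k} = x_{i,1}y_{j,1}z_{k,1} + x_{i,1}y_{j,2}z_{k,2} + x_{i,2}y_{j,1}z_{k,2}.
\]
The stabilizer of $v$ is then cut out by the three normalization equations $f_{1,1,1} = f_{1,2,2} = f_{2,1,2} = 1$ together with the vanishing of $f_{i,j,k}$ for the remaining five triples.

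Next I would exploit the vanishing equations to cut down the parameter count. The conditions $f_{2,1,1}=f_{2,2,1}=f_{2,2,2}=0$ involve only the second row of $X$, while $f_{1,1,2}=f_{1,2,1}=0$ similarly control the second columns of $Y$ and $Z$ against the second row of $X$. Using invertibility of $X$, $Y$, $Z$ in combination with the three normalization conditions (which, as in the previous lemma, immediately force certain linear combinations such as $y_{1,1}z_{1,1}+y_{1,2}z_{1,2}$ to be nonzero), I expect to derive that enough off-diagonal entries vanish to leave a manageable system. I anticipate the calculation will yield a triangularity statement analogous to $x_{2,1}=0$ from the previous proof, so that $x_{1,1},x_{2,2}\in\C^{\ast}$, and parallel constraints forcing $Y$ and $Z$ into a suitable Borel form.

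Having reduced the equations, I would then parametrize the stabilizer explicitly. Solving the three normalization equations $f_{1,1,1}=f_{1,2,2}=f_{2,1,2}=1$ should express three diagonal entries in terms of the remaining free parameters, exhibiting the isotropy as a product of the form $(\C^{\ast})^{a}\times \C^{b}$ for some $a,b\geq 0$ with $a+b=5$. Each factor is path connected, so the isotropy is path connected, and the component group of $O_5$ is trivial.

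The main obstacle will be the bookkeeping of the five vanishing and three normalization equations, since each $f_{i,j,k}$ is now a sum of three monomials rather than two as in the preceding lemma. However, the asymmetry of $v$—the vector $a_2$ appears in only one of the three summands, and $b_2$ and $c_1$ each appear in only one summand—should make the analysis collapse quickly: the three equations $f_{2,j,k}=0$ should rapidly yield $x_{2,1}=0$ and propagate triangularity down to $Y$ and $Z$, after which the remaining conditions become a linear system in the upper-triangular entries.
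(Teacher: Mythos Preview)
Your proposal is correct and follows essentially the same route as the paper: both expand $g\cdot v$, deduce the triangularity conditions $x_{2,1}=y_{2,1}=z_{1,2}=0$, and reduce to the four residual equations (\ref{O5eq}) in nine variables. The only cosmetic difference is in the last step—where you would parametrize the solution set explicitly as $(\C^{\ast})^3\times\C^2$ (free parameters $x_{1,1},y_{1,1},z_{2,2}\in\C^{\ast}$ and $x_{1,2},y_{1,2}\in\C$, with the remaining entries determined), the paper instead constructs two explicit paths $\gamma_1,\gamma_2$ connecting an arbitrary point of the variety $T$ to $(1,0,1,1,0,1,1,0,1)$; your parametrization is arguably cleaner and makes the paper's path $\gamma_2$ transparent.
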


\begin{proof}
Similar to the previous proof, write $g\cdot v=\sum_{i,j,k}f_{i,j,k}a_i\otimes b_j\otimes c_k$. The condition that $g$ is in the isotropy of $v$ is eight equations: $f_{1,1,1}=1$, $f_{1,2,2}=1$, $f_{2,1,2}=1$, and $f_{i,j,k}=0$ otherwise. In this case, $x_{2,1}=y_{2,1}=z_{1,2}=0$, and modulo these variables the system of equations 
$$
\left\{f_{1,1,1}=1, f_{1,2,2}=1, f_{2,1,2}=1, f_{i,j,k}=0 \mid (i,j,k)\neq (1,1,1),(1,2,2),(2,1,2)\right\}
$$
is equivalent to the vanishing of the following equations:
\begin{equation}\label{O5eq}
x_{1,1}y_{1,1}z_{1,1}-1,\;\;\;\; x_{1,1}y_{2,2}z_{2,2}-1,\;\;\;\;x_{2,2}y_{1,1}z_{2,2}-1,\;\;\;\;
x_{1,1}y_{1,1}z_{2,1}+x_{1,1}y_{1,2}z_{2,2}+x_{1,2}y_{1,1}z_{2,2}.
\end{equation}
It suffices to show that the variety $T$ defined by the equations (\ref{O5eq}) in $\mathbb{C}^9$ is path connected. Choose a point $P=(X_{1,1},X_{1,2},X_{2,2},Y_{1,1},Y_{1,2},Y_{2,2},Z_{1,1},Z_{2,1},Z_{2,2})\in T$. We begin by constructing a path in $T$ from $P$ to $Q=(X_{1,1},0,X_{2,2},Y_{1,1},0,Y_{2,2},Z_{1,1},0,Z_{2,2})$. Let $t\in [0,1]$ and set 
$$
\gamma_1(t)=(X_{1,1},X_{1,2}(1-t),X_{2,2},Y_{1,1},Y_{1,2}(1-t),Y_{2,2},Z_{1,1},Z_{2,1}(1-t),Z_{2,2}).
$$
Then $\gamma_1(0)=P$, $\gamma_1(1)=Q$, and $\gamma_1(t)$ satisfies the equations (\ref{O5eq}) for all $t\in [0,1]$. Therefore, $\gamma_1(t)$ is a path from $P$ to $Q$ that lies in $T$.

Next, we construct a path in $T$ from $Q$ to $R=(1,0,1,1,0,1,1,0,1)$, completing the proof. Since $\mathbb{C}\setminus \{0\}$ is path connected, there exist paths in $\mathbb{C}\setminus \{0\}$ sending $X_{1,1}$, $Y_{1,1}$, and $Z_{2,2}$ each to $1$. Denote these paths by $X_{1,1}(t)$, $Y_{1,1}(t)$, and $Z_{2,2}(t)$ respectively. We use these to define a path from $Q$ to $R$ in $T$:
$$
\gamma_2(t)=\left( X_{1,1}(t), 0, \frac{1}{Y_{1,1}(t)\cdot Z_{2,2}(t)}, Y_{1,1}(t), 0, \frac{1}{X_{1,1}(t)\cdot Z_{2,2}(t)}, \frac{1}{X_{1,1}(t)\cdot Y_{1,1}(t)},0,Z_{2,2}(t)\right).
$$
Using the equations (\ref{O5eq}), we see that $\gamma_2(0)=Q$, $\gamma_2(1)=R$, and $\gamma_2(t)$ lies in $T$ for all $t\in [0,1]$.
\end{proof}

\noindent We conclude that for all orbits $O\neq O_6$, there is a unique simple object in $\textnormal{mod}_{\tn{GL}}(\D)$ with support $\overline{O}$. We will now show that there are two simples with full support.

\begin{lemma}
The isotropy group of the point $v=a_1\otimes b_1\otimes c_1+a_2\otimes b_2\otimes c_2$ has two connected components:
\begin{itemize}
\item
$\left\{ \left( \left( {\begin{array}{cc}
   x_{1,1} & 0 \\
   0 & x_{2,2}
  \end{array} } \right),
\left( {\begin{array}{cc}
   y_{1,1} & 0 \\
   0 & y_{2,2}
  \end{array} } \right),
 \left( {\begin{array}{cc}
   z_{1,1} & 0 \\
   0 & z_{2,2} 
  \end{array} } \right)\right)
\mid x_{1,1}y_{1,1}z_{1,1}=1, x_{2,2}y_{2,2}z_{2,2}=1\right\}$\\

\item
$\left\{ \left(\left( {\begin{array}{cc}
   0 & x_{1,2} \\
   x_{2,1} & 0
  \end{array} } \right),
\left( {\begin{array}{cc}
   0 & y_{1,2} \\
   y_{2,1} & 0
  \end{array} } \right),
 \left( {\begin{array}{cc}
   0 & z_{1,2} \\
   z_{2,1} & 0
  \end{array} } \right)\right)
\mid x_{1,2}y_{1,2}z_{1,2}=1, x_{2,1}y_{2,1}z_{2,1}=1\right\}$
\end{itemize}
In particular, the component group corresponding to the dense orbit $O_6$ is $\mathbb{Z}/2\mathbb{Z}$.
\end{lemma}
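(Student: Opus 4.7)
The plan is to mirror the explicit computations of the two preceding lemmas. First I would write $g\cdot v = \sum_{p,q,r}f_{p,q,r}\,a_p\otimes b_q\otimes c_r$ for $g=(X,Y,Z)$ as in (\ref{gel}) and $v = a_1\otimes b_1\otimes c_1 + a_2\otimes b_2\otimes c_2$; a direct computation gives $f_{p,q,r} = x_{p,1}y_{q,1}z_{r,1}+x_{p,2}y_{q,2}z_{r,2}$. The isotropy condition $g\cdot v = v$ then becomes eight polynomial equations in the entries of $X,Y,Z$: $f_{1,1,1}=f_{2,2,2}=1$ and $f_{p,q,r}=0$ for all other $(p,q,r)$.

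Next, I would observe that $f_{1,1,1}=1$ forces at least one of the products $x_{1,1}y_{1,1}z_{1,1}$ or $x_{1,2}y_{1,2}z_{1,2}$ to be nonzero; these two cases will produce the two listed components. The symmetry $(X,Y,Z)\mapsto (XS,YS,ZS)$, where $S$ is the $2\times 2$ swap matrix, preserves the isotropy (since $(S,S,S)\cdot v = v$) and exchanges the two cases, so it suffices to handle the first. Assuming $x_{1,1}y_{1,1}z_{1,1}\neq 0$, the three equations $f_{2,1,1}=f_{1,2,1}=f_{1,1,2}=0$ solve for $x_{2,1}, y_{2,1}, z_{2,1}$ as explicit rational expressions in the remaining variables. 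Substituting these into $f_{2,2,1}=f_{2,1,2}=f_{1,2,2}=0$ and clearing denominators, each of the resulting equations should factor as a monomial in the coordinates times $f_{1,1,1}=1$, yielding the vanishings $x_{2,2}y_{2,2}z_{1,2}=x_{2,2}y_{1,2}z_{2,2}=x_{1,2}y_{2,2}z_{2,2}=0$. A parallel reduction of $f_{2,2,2}=1$ should then force $x_{2,2}y_{2,2}z_{2,2}\neq 0$, and combining these conclusions gives $x_{1,2}=y_{1,2}=z_{1,2}=0$ (and thus $x_{2,1}=y_{2,1}=z_{2,1}=0$ from the explicit formulas), placing $g$ in the first listed component with $x_{1,1}y_{1,1}z_{1,1}=x_{2,2}y_{2,2}z_{2,2}=1$.

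For the final step, each of the two pieces is parametrized by four entries chosen freely in $\C^{\ast}$ subject to two independent nondegenerate product constraints, hence is isomorphic to $(\C^{\ast})^4$ and in particular path connected. The two pieces are plainly disjoint (one consists of triples of diagonal matrices, the other of triples of antidiagonal matrices) and together exhaust the isotropy, so the component group $K/K^0$ has order two, as claimed.

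The step I expect to be the main obstacle is the polynomial manipulation outlined above: verifying cleanly that a single nonzero diagonal product $x_{1,1}y_{1,1}z_{1,1}$ forces every off-diagonal entry of $X,Y,Z$ to vanish. A conceptually cleaner alternative replaces this calculation with Kruskal's uniqueness theorem for tensor rank: since the pairs $\{a_1,a_2\}$, $\{b_1,b_2\}$, $\{c_1,c_2\}$ are each linearly independent, the decomposition $v = a_1\otimes b_1\otimes c_1 + a_2\otimes b_2\otimes c_2$ is unique up to scaling and permutation of summands, so $g\cdot v = v$ means $g$ either preserves or swaps the two rank-one summands, which gives the two components directly.
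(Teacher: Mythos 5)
Your proposal is correct and follows essentially the same approach as the paper: write $g\cdot v=\sum f_{i,j,k}\,a_i\otimes b_j\otimes c_k$, impose the eight equations, split into two cases, and observe each piece is $(\mathbb{C}^{\ast})^4$ (you split on which of $x_{1,1}y_{1,1}z_{1,1}$, $x_{1,2}y_{1,2}z_{1,2}$ is nonzero, whereas the paper splits on $x_{1,1}=0$ versus $x_{1,1}\neq 0$; both work, and you fill in the polynomial eliminations that the paper leaves implicit). The Kruskal-uniqueness alternative you sketch at the end is a genuinely cleaner conceptual route that the paper does not take: it replaces all elimination with the observation that a rank-two decomposition of a $2\times 2\times 2$ tensor with linearly independent factor pairs is unique up to scaling and permutation, so the isotropy must preserve or swap the two summands.
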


\begin{proof}
Similar to the proofs above, write $g\cdot v=\sum_{i,j,k}f_{i,j,k}a_i\otimes b_j\otimes c_k$. The condition that $g$ is in the isotropy of $v$ is eight equations: $f_{1,1,1}=1$, $f_{2,2,2}=1$, and $f_{i,j,k}=0$ otherwise. Considering the two cases $x_{1,1}=0$, $x_{1,1}\neq 0$ yields that the isotropy for $v$ is the disjoint union of the two sets above. In both cases, the matrix $Z$ is determined by the matrices $X$ and $Y$. We conclude that both sets above are isomorphic to $(\mathbb{C}^{\ast})^4$, where two copies of $\mathbb{C}^{\ast}$ correspond to $X$ and two copies of $\mathbb{C}^{\ast}$ correspond to $Y$. Therefore, the two subsets of $\tn{GL}$ above are connected, and the isotropy subgroup for $v$ has two connected components.
\end{proof}

\noindent By the above, conclude that there are eight simple objects in $\textnormal{mod}_{\tn{GL}}(\D)$. For orbits $O\neq O_6$, the simple object is the intersection homology $\D$-module $\mathcal{L}(O,V)$. These are denoted by $D$, with a subscript denoting which orbit they correspond to. The simples with full support are $D_6=S=\mathcal{L}(O_6,V)$ and $G_6=\mathcal{L}(O_6,\mathcal{G},V)$, where $\mathcal{G}$ is a nontrivial equivariant local system on $O_6$.

\subsection{Witness weights for the simple $\mathcal{D}$-modules}\label{Witness}
In this subsection, we describe the composition factors of $S_h$ and $S_h\cdot \sqrt{h}$, obtaining the witness weights for the simple objects along the way. In addition, we complete the proof of the Classification of Simple Modules. By \cite[Proposition 4.9]{lHorincz2018categories}, the filtrations of $S_h$ and $S_h\cdot \sqrt{h}$ are dictated by the Bernstein-Sato polynomial of $h$ (see \cite{sato1980micro} or \cite[Example 2.9]{lHorincz2018decompositions}):
\begin{equation}\label{bFunction}
b_h(s)=\left(s+1\right)\left(s+3/2\right)^2\left(s+2\right).
\end{equation}
For any $r\in \C$, we consider the $\D$-module $\langle h^r\rangle_{\D}$ that is the $\D$-submodule of $S_h\cdot h^r$ generated by $h^r$. By \cite[Proposition 4.9]{lHorincz2018categories}, the $\D$-module $\langle h^r\rangle_{\D}/\langle h^{r+1}\rangle_{\D}$ is nonzero if and only if $r$ is a root of the Bernstein-Sato polynomial $b_h(s)$. By (\ref{bFunction}) we obtain the following:
\begin{equation}\label{filtrations}
0\subsetneq S\subsetneq \langle h^{-1}\rangle_{\D}\subsetneq \langle h^{-2}\rangle_{\D}= S_h,\;\;\;\;0\subsetneq \langle h^{-1/2}\rangle_{\D}\subsetneq \langle h^{-3/2}\rangle_{\D}=S_h\cdot \sqrt{h}.
\end{equation}
We summarize here the results that we prove in this subsection. Given a simple module $M$, write $\lambda(M)$ for the set of witness weights. 

\begin{theorem}\label{witnessWeights}
The composition factors of $S_h$ are $S$, $E$, and $D_5$, each with multiplicity one, and the composition factors of $S_h\cdot \sqrt{h}$ are $G_6$, $D_{1,2,,2}$, $D_{2,1,2}$, $D_{2,2,1}$, and $D_1$, each with multiplicity one. More precisely, the inclusions $S\subsetneq \langle h^{-1}\rangle_{\D}$, $\langle h^{-1}\rangle_{\D}\subsetneq S_h$, and $\langle h^{-1/2}\rangle_{\D}\subsetneq S_h\cdot \sqrt{h}$ are non-split and:
$$
\langle h^{-1}\rangle_{\D}/S\cong D_5,\;\; S_h/\langle h^{-1}\rangle_{\D}\cong E,\;\;\langle h^{-1/2}\rangle_{\D}\cong G_6,\;\;\textnormal{and there is a non-split short exact sequence}
$$
$$
0\longrightarrow D_{1,2,2}\oplus D_{2,1,2}\oplus D_{2,2,1}\longrightarrow (S_h\cdot \sqrt{h})/G_6\longrightarrow D_1\longrightarrow 0.
$$
We have the following witness weights (using Convention \ref{convention}):
$$
(0,0)^3\in \lambda(S),\;\;\;  (1,1)^3\in \lambda(G_6),\;\;\; (2,2)^3\in \lambda(D_5),\;\;\;  (3,3)^3\in \lambda(D_1),\;\;\; (4,4)^3\in \lambda(E),
$$
$$
(3,1)\times (2,2)^2\in \lambda(D_{1,2,2}),\;\; (2,2)\times (3,1)\times (2,2)\in \lambda(D_{2,1,2}),\;\;(2,2)^2\times (3,1)\in \lambda(D_{2,2,1}).
$$
The characteristic cycles of the simple modules are described as follows:
$$
\operatorname{charC}(D_0)=[T^{\ast}_{O_0}V],\;\;\;\; \operatorname{charC}(D_1)=[T^{\ast}_{O_0}V]+[\overline{T^{\ast}_{O_1}V}],\;\;\;\;\operatorname{charC}(D_{i,j,k})=[\overline{T^{\ast}_{O_{i,j,k}}V}],
$$
$$
\operatorname{charC}(D_5)=[\overline{T^{\ast}_{O_1}V}]+[\overline{T^{\ast}_{O_{1,2,2}}V}]+[\overline{T^{\ast}_{O_{2,1,2}}V}]+[\overline{T^{\ast}_{O_{2,2,1}}V}]+[\overline{T^{\ast}_{O_5}V}],
$$
$$
\operatorname{charC}(S)=[T^{\ast}_{V}V],\;\;\;\;\operatorname{charC}(G_6)=[\overline{T^{\ast}_{O_5}V}]+[T^{\ast}_{V}V].
$$
\end{theorem}

\noindent In Sections \ref{RepTheory} and \ref{D}, we discussed the $\tn{GL}$-equivariant structure of $S$, $E$, $S_h$, $S_h\cdot \sqrt{h}$, and in Section \ref{PrelimWit} we found weights that appear in $D_{i,j,k}$ for $(i,j,k)=(1,2,2)$, $(2,1,2)$, and $(2,2,1)$. Now we study the weights that appear in $D_1$.

\begin{lemma}\label{witD1}
The multiplicity of $\bS_{(3,3)}A\otimes \bS_{(3,3)}B\otimes \bS_{(3,3)}C$ in $D_1$ is one. For $a\leq 2$, the multiplicity of $\bS_{(a,a)}A\otimes \bS_{(a,a)}B\otimes \bS_{(a,a)}C$ in $D_1$ is zero. Finally, the representations $\bS_{(3,1)}A\otimes \bS_{(2,2)}B\otimes \bS_{(2,2)}C$, $\bS_{(2,2)}A\otimes \bS_{(3,1)}B\otimes \bS_{(2,2)}C$, and $\bS_{(2,2)}A\otimes \bS_{(2,2)}B\otimes \bS_{(3,1)}C$ do not appear in $D_1$.
\end{lemma}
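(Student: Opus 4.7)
The plan is to exploit the desingularization $\pi:Y\to\overline{O_1}$ of Section \ref{PrelimWit} together with Lemma \ref{eulerForm}. Because $Y$ is smooth and $\pi$ is proper and birational, the Decomposition Theorem implies that each $\int_{\pi}^{j}\mathcal{O}_{O_1}$ is a semisimple equivariant $\D$-module supported in $\overline{O_1}$. The only simple equivariant $\D$-modules supported in $\overline{O_1}$ are $D_1$ and $E=D_0$, so every composition factor of every $\int_{\pi}^{j}\mathcal{O}_{O_1}$ is one of these two simples. Since $\pi$ restricts to an isomorphism over $O_1$, we have $\int_{\pi}^{j}\mathcal{O}_{O_1}|_{O_1}=0$ for $j>0$, so $D_1$ appears in $\int_{\pi}^{0}\mathcal{O}_{O_1}$ with multiplicity one and in no higher direct image. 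Hence in $\Gamma(\tn{GL})$ we obtain
\[
\left[\chi\left(\textstyle\int_{\pi}\mathcal{O}_{O_1}\right)\right]=[D_1]+n\cdot [E]
\]
for some integer $n$.

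By Lemma \ref{notE}, each of the five weights appearing in the statement has multiplicity zero in $E$. Consequently, for each such weight $w$ we have $\langle [D_1],w\rangle=\langle[\chi(\int_{\pi}\mathcal{O}_{O_1})],w\rangle$, and the problem reduces to computing the right-hand side via Lemma \ref{eulerForm}.

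Fix a weight $w=(\lambda,\mu,\nu)$ and consider the signed sum of Lemma \ref{eulerForm} over $(I,J,K)\in\binom{[2]}{1}^{3}$, taking $r\gg 0$. A choice with $I=\{1\}$ produces the shifted $A$-slot weight $(\lambda_1-4+r,\lambda_2-4)$, which is dominant with second component $\lambda_2-4\leq -2$; this contributes zero in $\tn{Sym}(V)$, and likewise for $J,K=\{1\}$. A choice with $I=\{2\}$ produces $(\lambda_1-4,\lambda_2-4+r)$, which for large $r$ is not dominant; applying the Bott convention (\ref{bott}) converts it to $-\bS_{(\lambda_2-5+r,\lambda_1-3)}A$, whose second component $\lambda_1-3$ is non-negative precisely when $\lambda_1\geq 3$. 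Thus the only potentially non-vanishing term is the all-$\{2\}$ contribution, and it is automatically zero unless each of $\lambda_1,\mu_1,\nu_1$ is at least $3$.

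This immediately handles the vanishing cases: for $w=(a,a)^{3}$ with $a\leq 2$ we have $\lambda_1=\mu_1=\nu_1=a<3$, and for the three weights $(3,1)\times(2,2)^{2}$ (and its permutations) at least two of $\lambda_1,\mu_1,\nu_1$ equal $2$, so the all-$\{2\}$ term also vanishes. For $w=(3,3)^{3}$, the all-$\{2\}$ term transforms each slot $(-1,r-1)$ via Bott into $-\bS_{(r-2,0)}$, and Lemma \ref{raicuFormula} (with $d=r-2$, $e=f=0$ for $r\gg 0$) shows $\bS_{(r-2,0)}^{\otimes 3}$ has multiplicity one in $\tn{Sym}(V)$. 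Combining the three Bott signs with the external minus sign in Lemma \ref{eulerForm} yields $\langle[\chi],(3,3)^{3}\rangle=-(-1)^{3}\cdot 1=1$. The main technical point is precisely this sign bookkeeping at $(3,3)^{3}$, where the cube of Bott signs must exactly cancel the external minus; everywhere else the argument vanishes for the elementary reason that the relevant Bott-transformed second component remains negative.
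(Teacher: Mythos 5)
Your proof is correct and takes essentially the same route as the paper: reduce to the Euler characteristic of $\int_{\pi}\mathcal{O}_{O_1}$ using Lemma \ref{notE} (since the five weights in question all have multiplicity zero in $E$), apply Lemma \ref{eulerForm}, observe that for $r\gg 0$ only $I=J=K=\{2\}$ can survive because $\tn{Sym}(V)$ has only nonnegative weights, apply the Bott convention, and finish with Lemma \ref{raicuFormula} at $(3,3)^3$.

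Two small imprecisions in your justification that $[\chi(\int_{\pi}\mathcal{O}_{O_1})]=[D_1]+n[E]$, neither of which is fatal. First, the Decomposition Theorem applies to $\int_{\pi}\mathcal{O}_Y$ rather than to $\int_{\pi}\mathcal{O}_{O_1}$: here $\mathcal{O}_{O_1}$ is the localization of $\mathcal{O}_Y$ along the zero section, which is not an intersection complex, so semisimplicity of the direct images is not automatic. Fortunately you never actually use semisimplicity --- support in $\overline{O_1}$ together with the fact that $\overline{O_1}$ has only the two orbits $O_1$ and $O_0$ (each with trivial component group) already forces every composition factor to be $D_1$ or $E$. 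Second, base change over $O_1$ gives $\int_{\pi}^{j}\mathcal{O}_{O_1}|_{O_1}=0$ for all $j\neq 0$, not merely $j>0$; proper $\D$-module pushforward can produce negative-degree cohomology, and you need vanishing there too to conclude $D_1$ enters the alternating sum with coefficient exactly $+1$. (There is also a trivial arithmetic slip: for $\lambda=(3,3)$ the second slot is $\lambda_2-4=-1$, not $\leq -2$; only $\lambda_2-4<0$ is needed.)
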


\begin{proof}
Let $(\gamma,\delta,\sigma)\in \Lambda$ be one of the triples of dominant weights in the statement of the lemma. Use the notation from Section \ref{PrelimWit}. By Lemma \ref{notE}, to prove the assertion it suffices to show that 
$$
\left\langle \left[ \chi\left(\int_{\pi}\mathcal{O}_{O_1}\right)\right], [\bS_{\gamma}A\otimes \bS_{\delta}B\otimes \bS_{\sigma}C]\right\rangle=
\begin{cases}
1 & \tn{ if $(\gamma,\delta,\sigma)=((3,3),(3,3),(3,3))$},\\
0 & \tn{ otherwise}.
\end{cases}
$$
By Lemma \ref{eulerForm} we have that the multiplicity of $[\bS_{\gamma}A\otimes \bS_{\delta}B\otimes \bS_{\sigma}C]$ in $[ \chi(\int_{\pi}\mathcal{O}_{O_1})]$ is equal to
$$
\lim_{r\to \infty}\left( -\sum_{I,J,K\in \binom{[2]}{1}}\langle [\textnormal{Sym}(V)], [\bS_{(\gamma_1-4,\gamma_2-4)(r,I)}A\otimes \bS_{(\delta_1-4,\delta_2-4)(r,J)}B\otimes \bS_{(\sigma_1-4,\sigma_2-4)(r,K)}C]\rangle\right).
$$
Since $\gamma_2-4<0$, we have 
$$
 \langle [\textnormal{Sym}(V)], [\bS_{(\gamma_1-4,\gamma_2-4)(r,I)}A\otimes \bS_{(\delta_1-4,\delta_2-4)(r,J)}B\otimes \bS_{(\sigma_1-4,\sigma_2-4)(r,K)}C]\rangle\neq 0 \textnormal{  only if  } I=J=K=\{2\}.
$$
Using the convention (\ref{bott}), we obtain that the multiplicity of $[\bS_{\gamma}A\otimes \bS_{\delta}B\otimes \bS_{\sigma}C]$ in $[\chi(\int_{\pi}\mathcal{O}_{O_1})]$ is given by
\begin{equation}\label{limitMult}
\lim_{r\to \infty} \langle [\textnormal{Sym}(V)], [\bS_{(\gamma_2+r-5,\gamma_1-3)}A\otimes \bS_{(\delta_2+r-5,\delta_1-3)}B\otimes \bS_{(\sigma_2+r-5,\sigma_1-3)}C]\rangle.
\end{equation}
When $(\gamma,\delta,\sigma)\neq ((3,3),(3,3),(3,3))$, then one of the following holds: $\gamma_1-3<0$, $\delta_1-3<0$, or $\sigma_1-3<0$. Thus, in that scenario, every term in (\ref{limitMult}) is zero, implying that the limit is zero. Therefore, we have proven all assertions in the statement of the lemma except the assertion about $\bS_{(3,3)}A\otimes \bS_{(3,3)}B\otimes \bS_{(3,3)}C$. 

To complete the proof, let $(\gamma,\delta,\sigma)= ((3,3),(3,3),(3,3))$. We need to show that the multiplicity of $\bS_{(\gamma_2+r-5,\gamma_1-3)}A\otimes \bS_{(\delta_2+r-5,\delta_1-3)}B\otimes \bS_{(\sigma_2+r-5,\sigma_1-3)}C=\bS_{(r-2,0)}A\otimes \bS_{(r-2,0)}B\otimes \bS_{(r-2,0)}C$ in $\textnormal{Sym}(V)$ is one for $r\gg 0$. Dualizing (\ref{charS}) yields the decomposition of $\tn{Sym}(V)=S^{\ast}$ into irreducibles, and we will use Lemma \ref{raicuFormula} to compute the desired multiplicities. Using the notation of that lemma, let $(\lambda,\mu,\nu)=((r-2,0),(r-2,0),(r-2,0))$. Then $d=r-2$, $f_{\lambda,\mu,\nu}=0$, and $e_{\lambda,\mu,\nu}=0$. For $r\gg 0$, we have that $e_{\lambda,\mu,\nu}< d-1$ and $e_{\lambda,\mu,\nu}\geq 2f_{\lambda,\mu,\nu}$. Since $e_{\lambda,\mu,\nu}$ is even, Lemma \ref{raicuFormula} and (\ref{charS}) imply that the multiplicity of $\bS_{(r-2,0)}A\otimes \bS_{(r-2,0)}B\otimes \bS_{(r-2,0)}C$ in $\textnormal{Sym}(V)$ for $r\gg 0$ is one, as claimed.
\end{proof}

\noindent Using the information of Lemma \ref{witD1}, we now describe the composition factors of $S_h$ and $S_h\cdot \sqrt{h}$:

\begin{lemma}\label{compFacs1}
The modules $S$ and $E$ are not composition factors of $S_h\cdot \sqrt{h}$. The modules $D_{1,2,2}$, $D_{2,1,2}$, $D_{2,2,1}$, $D_1$, $\mathcal{F}(D_1)$ are not composition factors of $S_h$.
\end{lemma}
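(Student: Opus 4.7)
The strategy is to exhibit, for each simple module $M$ named in the statement, an irreducible representation $\bS_{\lambda}A\otimes \bS_{\mu}B\otimes \bS_{\nu}C$ whose multiplicity in $M$ is positive but whose multiplicity in the relevant ambient module ($S_h$ or $S_h\cdot \sqrt{h}$) is zero. Since all objects involved lie in $\Gamma(\tn{GL})$, multiplicities of irreducibles are nonnegative integers and are additive over composition factors; hence a simple module with nonzero multiplicity at some weight cannot appear as a composition factor of a module with multiplicity zero at that same weight. Several such witnesses have already been gathered in the lemmas of Sections \ref{RepTheory} and \ref{PrelimWit}, so the proof will be almost entirely a matter of matching them against Lemma \ref{multSh}.

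For the first assertion, I would use the weights $(0,0)^3$ and $(4,4)^3$. The trivial representation $(0,0)^3$ plainly occurs in $S$ with multiplicity one (as the constants), and Lemma \ref{notE} records multiplicity one of $(4,4)^3$ in $E$. Since $0$ and $4$ are both even, Lemma \ref{multSh} gives that each of these weights has multiplicity zero in $S_h\cdot \sqrt{h}$. Therefore neither $S$ nor $E$ can be a composition factor of $S_h\cdot \sqrt{h}$.

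For the second assertion, I would peel off the five simples in turn. By Lemma \ref{multDet}, the modules $D_{1,2,2}$, $D_{2,1,2}$, $D_{2,2,1}$ contain with multiplicity one the weights $(3,1)\times (2,2)^2$, $(2,2)\times (3,1)\times (2,2)$, and $(2,2)^2\times (3,1)$ respectively, and Lemma \ref{multSh} asserts each of these three weights has multiplicity zero in $S_h$. By Lemma \ref{witD1}, $(3,3)^3$ appears in $D_1$ with multiplicity one, and since $3$ is odd Lemma \ref{multSh} again gives multiplicity zero in $S_h$. For $\mathcal{F}(D_1)$, I would invoke the Fourier transform formula (\ref{fourier}) from Section \ref{D}: since $\mathcal{F}((3,3))=(1,1)$, the weight $(1,1)^3$ occurs in $\mathcal{F}(D_1)$ with multiplicity one, and $1$ being odd forces multiplicity zero in $S_h$ via Lemma \ref{multSh}. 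The argument is then complete.

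There is no genuine obstacle once these witness weights are in hand; the only thing requiring care is to confirm that each weight we pick is literally among those covered by Lemma \ref{multSh} (the three ``non-diagonal'' weights $(3,1)\times(2,2)^2$ and its permutations must be used verbatim), and to carry out the single Fourier transform computation needed for $\mathcal{F}(D_1)$.
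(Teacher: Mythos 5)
Your proof is correct and matches the paper's own argument essentially line for line: both proceed by matching the witness weights from Lemmas \ref{notE}, \ref{multDet}, and \ref{witD1} against the vanishing multiplicities recorded in Lemma \ref{multSh}, with the single Fourier-transform computation $\mathcal{F}((3,3))=(1,1)$ handling $\mathcal{F}(D_1)$. No comment beyond that is needed.
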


\begin{proof}
By Lemma \ref{multSh}, the multiplicities of $\bS_{(0,0)}A\otimes \bS_{(0,0)}B\otimes \bS_{(0,0)}C$ and $\bS_{(4,4)}A\otimes \bS_{(4,4)}B\otimes \bS_{(4,4)}C$ in $S_h\cdot \sqrt{h}$ are both zero. Since $S$ contains the subrepresentation $\bS_{(0,0)}A\otimes \bS_{(0,0)}B\otimes \bS_{(0,0)}C$, it is not a composition factor of $S_h\cdot \sqrt{h}$. Similarly, by Lemma \ref{notE}, the simple module $E$ is not a composition factor of $S_h\cdot \sqrt{h}$.

To prove the second assertion, recall that by Lemma \ref{multDet}, we have 
$$
\langle [D_{1,2,2}], [\bS_{(3,1)}A\otimes \bS_{(2,2)}B\otimes \bS_{(2,2)}C]\rangle=1,\;\;\langle [D_{2,1,2}], [\bS_{(2,2)}A\otimes \bS_{(3,1)}B\otimes \bS_{(2,2)}C]\rangle=1,
$$
$$
\langle [D_{2,2,1}], [\bS_{(2,2)}A\otimes \bS_{(2,2)}B\otimes \bS_{(3,1)}C]\rangle=1.
$$
Using Lemma \ref{multSh}, these weights do no appear in $S_h$. Thus, $D_{1,2,2}$, $D_{2,1,2}$, $D_{2,2,1}$ cannot be composition factors of $S_h$. Finally, by Lemma \ref{witD1}, the simple $D_1$ contains the weight $(3,3)^3$, and thus $\mathcal{F}(D_1)$ contains the weight $(1,1)^3$. Again, by Lemma \ref{multSh}, the result follows.
\end{proof}

Since $\langle h^{-1/2}\rangle_{\D}$ has full support, it must contain $S$ or $G_6$ as a submodule. By Lemma \ref{compFacs1} it follows that $G_6\subseteq \langle h^{-1/2}\rangle_{\D}$. We now prove that this is in fact an equality. 

\begin{lemma}\label{Bsub}
The multiplicity of $\bS_{(1,1)}A\otimes \bS_{(1,1)}B\otimes \bS_{(1,1)}C$ in $G_6$ is one, $\mathcal{F}(D_1)\cong G_6$, and $G_6\cong \langle h^{-1/2}\rangle_{\D}$.
\end{lemma}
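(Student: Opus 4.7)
My plan is to establish the isomorphism $\mathcal{F}(D_1)\cong G_6$ first, as the multiplicity assertion and the identification with $\langle h^{-1/2}\rangle_{\D}$ will then follow with little extra work. By Lemma \ref{witD1}, the representation $\bS_{(3,3)}A\otimes \bS_{(3,3)}B\otimes \bS_{(3,3)}C$ appears in $D_1$ with multiplicity one, and the Fourier transform sends $(3,3)^3$ to $(-3+4,-3+4)^3=(1,1)^3$. Hence $\bS_{(1,1)}A\otimes \bS_{(1,1)}B\otimes \bS_{(1,1)}C$ is a weight of the simple module $\mathcal{F}(D_1)$ with multiplicity one.

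I would then narrow down $\mathcal{F}(D_1)$ by inspecting which of the eight simples can contain $(1,1)^3$: $S$ consists only of non-positive weights by Convention \ref{convention}, $E$ is ruled out by Lemma \ref{notE}, $D_1$ by Lemma \ref{witD1}, and $D_{1,2,2}$, $D_{2,1,2}$, $D_{2,2,1}$ by Lemma \ref{multDet}. The only remaining possibilities are $D_5$ and $G_6$.

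The main obstacle is ruling out $\mathcal{F}(D_1)\cong D_5$, which I would achieve by demonstrating that $D_5$ is a composition factor of $S_h$ and invoking the second assertion of Lemma \ref{compFacs1}. Concretely, since $\overline{O_5}=\{h=0\}$ has codimension one in $V$, the local cohomology $H^1_{\overline{O_5}}(\mathcal{O}_V)\cong S_h/S$ is nonzero by (\ref{codim}), and by Proposition \ref{IHmod} its unique simple submodule is $\mathcal{L}(\overline{O_5},V)=D_5$. Thus $D_5$ is a composition factor of $S_h$, whereas $\mathcal{F}(D_1)$ is not, forcing $\mathcal{F}(D_1)\cong G_6$ and proving the second claim.

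From this the multiplicity of $\bS_{(1,1)}A\otimes \bS_{(1,1)}B\otimes \bS_{(1,1)}C$ in $G_6$ is one, as required. Finally, to show $G_6\cong \langle h^{-1/2}\rangle_{\D}$, I would combine the inclusion $G_6\subseteq \langle h^{-1/2}\rangle_{\D}$ already established, the fact from Lemma \ref{multSh} that $(1,1)^3$ has multiplicity one in $S_h\cdot\sqrt{h}$, and the observation that the one-dimensional $(1,1)^3$-isotypic subspace of $\langle h^{-1/2}\rangle_{\D}$ is spanned by $h^{-1/2}$. Since $G_6$ must contain this subspace, $h^{-1/2}\in G_6$, so $\langle h^{-1/2}\rangle_{\D}=\D\cdot h^{-1/2}\subseteq G_6$, giving the desired equality.
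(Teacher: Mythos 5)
Your proof is correct and follows essentially the same approach as the paper: the crucial observations in both are that $(1,1)^3$ occurs with multiplicity one in $S_h\cdot\sqrt{h}$ (Lemma~\ref{multSh}), that all simples except $D_5$ and $G_6$ are ruled out as candidates for carrying that weight, that $D_5$ is ruled out because it embeds into $H^1_{\overline{O_5}}(\mathcal{O}_V)\cong S_h/S$ via Proposition~\ref{IHmod}, and that $h^{-1/2}$ itself spans the one-dimensional $(1,1)^3$-isotypic piece. The only cosmetic difference is that you establish $\mathcal{F}(D_1)\cong G_6$ first and deduce the multiplicity claim afterward, whereas the paper establishes the multiplicity claim for $G_6$ first and then deduces $\mathcal{F}(D_1)\cong G_6$; your use of Lemma~\ref{compFacs1} to rule out $\mathcal{F}(D_1)\cong D_5$ directly is a slightly tidier packaging of the same reasoning.
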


\begin{proof}
We begin by proving the first claim. By Lemma \ref{multSh}, the module $S_h\cdot \sqrt{h}$ contains the representation $\bS_{(1,1)}A\otimes \bS_{(1,1)}B\otimes \bS_{(1,1)}C$ with multiplicity one. Thus, $S_h\cdot \sqrt{h}$ must have a composition factor with the weight $(1,1)^3$. By Lemma \ref{raicuFormula}, Lemma \ref{notE}, Lemma \ref{multDet}, and Lemma \ref{witD1}, the simples $D_0=E$, $D_1$, $D_{i,j,k}$, and $S$ do not contain the weight $(1,1)^3$, so we conclude that the composition factor of $S_h\cdot \sqrt{h}$ with this weight must be $D_5$ or $G_6$. Since $\bS_{(1,1)}A\otimes \bS_{(1,1)}B\otimes \bS_{(1,1)}C$ has multiplicity one in $S_h\cdot \sqrt{h}$, it suffices to show that the multiplicity of $\bS_{(1,1)}A\otimes \bS_{(1,1)}B\otimes \bS_{(1,1)}C$ in $D_5$ is zero. By Lemma \ref{IHmod}, and the \v{C}ech cohomology description of local cohomology, it follows that $D_5\subset S_h/S$ (i.e. it is a composition factor of $S_h$). By Lemma \ref{multSh}, conclude that $G_6$ contains the weight $(1,1)^3$, and therefore $\mathcal{F}(D_1)\cong G_6$. The third assertion follows, since $h^{-1/2}$ has weight $(1,1)^3$, again by Lemma \ref{multSh}.
\end{proof}

\begin{lemma}\label{D1notSub}
There is a non-split short exact sequence
$$
0\longrightarrow D_{1,2,2}\oplus D_{2,1,2}\oplus D_{2,2,1}\longrightarrow (S_h\cdot \sqrt{h})/G_6\longrightarrow D_1\longrightarrow 0.
$$
\end{lemma}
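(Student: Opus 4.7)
The plan is to first determine the composition factors of $M := (S_h\cdot\sqrt{h})/G_6$ from witness weights, then identify the top by a generator argument and the kernel by a middle-extension argument, and finally rule out splitting.

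For the composition factors: by Lemma \ref{compFacs1}, neither $S$ nor $E$ is a composition factor of $S_h\cdot\sqrt{h}$, and by Lemma \ref{multSh}, neither is $D_5$ (its witness weight $(2,2)^3$ has multiplicity zero). Pairing the witness weights from the statement of Theorem \ref{witnessWeights} with Lemmas \ref{multSh}, \ref{multDet}, and \ref{witD1} shows each of $G_6, D_1, D_{1,2,2}, D_{2,1,2}, D_{2,2,1}$ occurs with multiplicity exactly one in $S_h\cdot\sqrt{h}$. Since $G_6\cong \langle h^{-1/2}\rangle_{\D}$ is the submodule by Lemma \ref{Bsub}, the quotient $M$ has composition factors $D_1, D_{1,2,2}, D_{2,1,2}, D_{2,2,1}$, each with multiplicity one.

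For the structure: $S_h\cdot\sqrt{h}=\langle h^{-3/2}\rangle_{\D}$ is generated as a $\D$-module by $h^{-3/2}$, a weight vector of weight $(3,3)^3$. This weight is absent from $G_6\cong \mathcal{F}(D_1)$, since its $\mathcal{F}$-preimage $(1,1)^3$ is absent from $D_1$ by Lemma \ref{witD1}. Hence the image $v\in M$ of $h^{-3/2}$ is nonzero and generates $M$. Since Lemma \ref{multDet} shows $(3,3)^3$ does not appear in any $D_{i,j,k}$, the image of $v$ in the semisimple quotient $M/\mathrm{rad}(M)$ lies in an isotypic copy of $D_1$, and as $v$ generates $M$ we get $M/\mathrm{rad}(M)\cong D_1$. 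Now set $M':=\ker(M\twoheadrightarrow D_1)$, with composition factors $D_{1,2,2}, D_{2,1,2}, D_{2,2,1}$. None of these has support contained in $\overline{O_1}$, so $M'$ has no sub or quotient supported in $\overline{O_1}$ and therefore agrees with the middle extension along $j\colon V\setminus\overline{O_1}\hookrightarrow V$. On $V\setminus\overline{O_1}$, the intersections $\overline{O_{i,j,k}}\cap (V\setminus\overline{O_1}) = O_{i,j,k}$ are pairwise disjoint, so $\tn{Ext}^1$ vanishes between the restrictions $D_{i,j,k}|_{V\setminus\overline{O_1}}$, forcing $j^{\ast}M'\cong\bigoplus_{(i,j,k)} D_{i,j,k}|_{V\setminus\overline{O_1}}$. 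Additivity of $j_{!\ast}$ together with $j_{!\ast}(D_{i,j,k}|_{V\setminus\overline{O_1}})=D_{i,j,k}$ then gives $M'\cong D_{1,2,2}\oplus D_{2,1,2}\oplus D_{2,2,1}$.

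Finally, if the short exact sequence split, then $D_1$ would be a direct summand of $M$; since no $D_{i,j,k}$ carries the weight $(3,3)^3$, the unique weight-$(3,3)^3$ vector $v$ would lie inside this $D_1$ summand, forcing $\D\cdot v\subseteq D_1\subsetneq M$ and contradicting that $v$ generates $M$. The step I expect to require the most care is the middle-extension identification of $M'$: I would invoke the standard characterization that a regular holonomic $\D$-module with no sub or quotient supported on a closed subvariety $Z$ equals the middle extension from its open complement, together with the additivity of $j_{!\ast}$ over direct sums.
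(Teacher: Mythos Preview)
Your argument is correct, and the overall architecture---identify the composition factors, pin down the unique simple top via the generator $h^{-3/2}$ of weight $(3,3)^3$, then show the kernel is semisimple---matches the paper's. The one place where you genuinely diverge is in proving that $M'=\ker(M\twoheadrightarrow D_1)$ is the direct sum $D_{1,2,2}\oplus D_{2,1,2}\oplus D_{2,2,1}$.

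The paper argues this via symmetry: once one $D_{i,j,k}$ is a submodule of $M$, the $\mathbb{Z}/3\mathbb{Z}$-action permuting $A,B,C$ (which preserves $S_h\cdot\sqrt{h}$ and $G_6$ while permuting the $D_{i,j,k}$) forces all three to be submodules, hence their direct sum is. You instead use the intermediate-extension characterization: since no composition factor of $M'$ is supported in $\overline{O_1}$, $M'\cong j_{!\ast}j^\ast M'$; on $V\setminus\overline{O_1}$ the three orbits $O_{i,j,k}$ are disjoint closed subsets, so $j^\ast M'$ splits, and additivity of $j_{!\ast}$ finishes. Your route is more structural and would work even without the $\mathbb{Z}/3\mathbb{Z}$ symmetry, while the paper's route is shorter and avoids invoking the machinery of intermediate extensions. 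Your non-splitting argument (the generator $v$ would be trapped in the $D_1$ summand) is also slightly different from the paper's, which instead appeals to the uniqueness of the simple quotient from \cite[Proposition~4.9]{lHorincz2018categories} to rule out $D_1$ being a submodule.

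One small presentational point: when you exclude $D_5$ by saying ``its witness weight $(2,2)^3$ has multiplicity zero,'' you are forward-referencing the statement of Theorem~\ref{witnessWeights}, whose proof comes after this lemma. The fact $(2,2)^3\in D_5$ is easy to justify directly at this stage (it is the only possible home for that weight among the composition factors of $S_h$, given Lemmas~\ref{notE} and~\ref{compFacs1}), but you should say so rather than cite the theorem.
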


\begin{proof}
We begin by showing that $D_1$ is not a submodule of $(S_h\cdot \sqrt{h})/G_6=\langle h^{-3/2}\rangle_{\D}/\langle h^{-1/2}\rangle_{\D}$. By the proof of \cite[Proposition 4.9]{lHorincz2018categories}, it follows that $\langle h^{-3/2}\rangle_{\D}/\langle h^{-1/2}\rangle_{\D}$ has a unique $\D$-simple quotient, containing the representation $S_{(3,3)}A\otimes S_{(3,3)}B\otimes S_{(3,3)}C$. Since $D_1$ is the only simple containing this representation, we conclude that $D_1$ is a quotient of $(S_h\cdot \sqrt{h})/G_6=\langle h^{-3/2}\rangle_{\D}/\langle h^{-1/2}\rangle_{\D}$. By Lemma \ref{multSh}, Lemma \ref{witD1}, and Lemma \ref{Bsub}, we conclude that $D_{1,2,2}$, $D_{2,1,2}$, and $D_{2,2,1}$ are also composition factors of $(S_h\cdot \sqrt{h})/G_6$. If $D_1$ were a submodule of $(S_h\cdot \sqrt{h})/G_6$, then it would be a direct summand. In particular, $D_1$ and one of $D_{i,j,k}$ would both be quotients of $(S_h\cdot \sqrt{h})/G_6$, contradicting the second assertion of \cite[Proposition 4.9]{lHorincz2018categories}. Therefore, $D_1$ is not a submodule of $(S_h\cdot \sqrt{h})/G_6$. It follows that for some $(i,j,k)=(1,2,2)$, $(2,1,2)$, $(2,2,1)$, the simple $D_{i,j,k}$ is a submodule of $(S_h\cdot \sqrt{h})/G_6$. The natural $\mathbb{Z}/3\mathbb{Z}$ action on $V$ extends to an action on $S_h\cdot \sqrt{h}$ preserving $G_6$ and permuting $D_{i,j,k}$, so we have that $D_{i,j,k}$ is a submodule of $(S_h\cdot \sqrt{h})/G_6$ for all $(i,j,k)$. Since the $D_{i,j,k}$'s are simple, it follows that $D_{1,2,2}\oplus D_{2,1,2}\oplus D_{2,2,1}$ is a submodule of $(S_h\cdot \sqrt{h})/G_6$. Therefore, the short exact sequence in the statement of the lemma exists, and is non-split.
\end{proof}

\noindent Combining the three previous lemmas and their proofs, we complete the proof of Theorem \ref{witnessWeights}:

\begin{proof}
[Proof of Theorem \ref{witnessWeights}] We begin by proving the claim about the composition factors and filtration of $S_h$. By (\ref{filtrations}), the $\D$-module $S_h$ has length greater than or equal to three. By Lemma \ref{compFacs1}, its composition factors are among $S$, $E$, and $D_5$. Since $h^{-1}$ is of weight $(2,2)^3$, the submodule $\langle h^{-1}\rangle_{\D}\subsetneq S_h$ has a simple quotient containing this weight. Using Lemma \ref{notE} and the fact that $S$ only has weights $(\lambda,\mu,\nu)$ with $\lambda_i\leq 0$, $\mu_i\leq 0$, and $\nu_i\leq 0$ for $i=1,2$, it follows that $D_5$ has the weight $(2,2)^3$, and $\langle h^{-1}\rangle$ surjects onto $D_5$. By a similar argument, we have also that $E$ is a quotient of $S_h$. Since the weights $(0,0)^3$, $(2,2)^3$, and $(4,4)^3$ each appear in $S_h$ with multiplicity one by Lemma \ref{multSh}, we obtain that $S$, $E$, and $D_5$ are each composition factors of $S_h$ of multiplicity one, $\langle h^{-1}\rangle_{\D}/S\cong D_5$, and $S_h/\langle h^{-1}\rangle_{\D}\cong E$.

Next, using Lemma \ref{Bsub} and Lemma \ref{D1notSub}, the assertions about the filtration and composition factors of $S_h\cdot \sqrt{h}$ are immediate. Since the module $\langle h^{s}\rangle_{\D}$ contains the weight $(-2s,-2s)^3$ for all $s\in \mathbb{Q}$, it follows that each simple contains the above claimed witness weights. These weights are unique to their respective simple module by Lemma \ref{multSh}.

We now calculate the characteristic cycles of the simple modules. The descriptions of $\operatorname{charC}(S)$ and $\operatorname{charC}(D_0)$ are standard, and only included in the statement of Theorem \ref{witnessWeights} for completeness. Next, via each of the three flattenings of $V$ to a space of $4\times 2$ matrices, the orbit closures $\overline{O_{i,j,k}}$ are identified with matrices of rank $\leq 1$, so it is known that $D_{i,j,k}$ have irreducible characteristic cycle \cite{raicu2016characters}, as claimed. Since the hypersurface $\overline{O_5}$ is projective dual to $\overline{O_1}$, and $\mathcal{F}(G_6)=D_1$ by Lemma \ref{Bsub}, it follows from \cite[Section 4.3]{lHorincz2018categories} that the characteristic cycles of $D_1$ and $G_6$ are as asserted.  

It remains to determine the characteristic cycle of $D_5$. Since $S_h\cdot \sqrt{h}$ has composition factors $D_1$, $D_{1,2,2}$, $D_{2,1,2}$, $D_{2,2,1}$, and $G_6$, each with multiplicity one, we have that the characteristic cycle of $S_h\cdot \sqrt{h}$ is given by
$$
\operatorname{charC}(S_h\cdot \sqrt{h})=[T^{\ast}_{O_0}V]+[\overline{T^{\ast}_{O_1}V}]+[\overline{T^{\ast}_{O_{1,2,2}}V}]+[\overline{T^{\ast}_{O_{2,1,2}}V}]+[\overline{T^{\ast}_{O_{2,2,1}}V}]+[\overline{T^{\ast}_{O_5}V}]+[T^{\ast}_{V}V].
$$
By the characteristic cycle of $G_6$, we see that the singular locus of $G_6$ is $\overline{O_5}$, so that $\operatorname{charC}(S_h)$ is equal to $\operatorname{charC}(S_h\cdot \sqrt{h})$ \cite[Theorem 3.2]{MR833194} (see also \cite[Lemma 1.11]{lHorincz2021holonomic}). In particular,
$$
\operatorname{charC}(D_5)=\operatorname{charC}(S_h)-\operatorname{charC}(S)-\operatorname{charC}(D_0)=[\overline{T^{\ast}_{O_1}V}]+[\overline{T^{\ast}_{O_{1,2,2}}V}]+[\overline{T^{\ast}_{O_{2,1,2}}V}]+[\overline{T^{\ast}_{O_{2,2,1}}V}]+[\overline{T^{\ast}_{O_5}V}],
$$
as required to complete the proof of Theorem \ref{witnessWeights}.
\end{proof}

\begin{prop}\label{fourProp}
The holonomic duality functor fixes all of the simple modules. The Fourier transform swaps the modules in the two pairs $(S,E)$, $(G_6,D_1)$, and all other simples are fixed. 
\end{prop}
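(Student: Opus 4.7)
The plan is to handle the two functors separately, using the classification of equivariant local systems from Theorem \ref{eqRH} for $\mathbb{D}$ and the witness weights of Theorem \ref{witnessWeights} for $\mathcal{F}$. For the holonomic duality functor, recall from Section \ref{D} that $\mathbb{D}$ sends $\mathcal{L}(O,\mathcal{M},V)$ to $\mathcal{L}(O,\mathcal{M}^{\ast},V)$. By the Classification of Simple Modules, seven of the simples correspond to the trivial local systems on their respective orbits, and the trivial local system is self-dual, so these are fixed. For $G_6$, the component group of $O_6$ is $\mathbb{Z}/2\mathbb{Z}$, whose unique nontrivial irreducible representation is the one-dimensional sign character, hence self-dual; so $\mathbb{D}(G_6)\cong G_6$ as well.

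For the Fourier transform $\mathcal{F}$, I first observe that $\mathcal{F}$ is an involution on isomorphism classes, since $\mathcal{F}^2(M)=(M^{\ast}\otimes\det V)^{\ast}\otimes \det V \cong M$. This reduces the task to exhibiting $\mathcal{F}(S)\cong E$, $\mathcal{F}(D_1)\cong G_6$, and $\mathcal{F}(N)\cong N$ for each $N\in\{D_5, D_{1,2,2}, D_{2,1,2}, D_{2,2,1}\}$. The first isomorphism is the defining identity for $E$ recorded right after (\ref{fourier}), and the second is precisely Lemma \ref{Bsub}.

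For the four remaining simples I would argue via witness weights. Since $\mathcal{F}$ is a self-equivalence it permutes simple objects, and by (\ref{fourier}) we have $[\mathcal{F}(M)]=\mathcal{F}([M])$. A direct computation using $\mathcal{F}(\pi)=(-\pi_2+4,-\pi_1+4)$ gives $\mathcal{F}((2,2))=(2,2)$ and $\mathcal{F}((3,1))=(3,1)$, so each of the witness weights $(2,2)^3$, $(3,1)\times(2,2)^2$, $(2,2)\times(3,1)\times(2,2)$, and $(2,2)^2\times(3,1)$ is fixed by $\mathcal{F}$. Therefore $\mathcal{F}(D_5)$ is a simple containing the witness weight $(2,2)^3$, which by Theorem \ref{witnessWeights} uniquely identifies $D_5$; hence $\mathcal{F}(D_5)\cong D_5$, and the analogous argument with the respective witness weights handles $D_{1,2,2}$, $D_{2,1,2}$, and $D_{2,2,1}$.

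I do not anticipate a real obstacle here: Theorem \ref{witnessWeights} has already done the heavy lifting by producing witness weights with exactly the $\mathcal{F}$-symmetry needed, and Lemma \ref{Bsub} has already identified $\mathcal{F}(D_1)$. The only mildly subtle point is the self-duality of the nontrivial character of $\mathbb{Z}/2\mathbb{Z}$, which is what prevents $\mathbb{D}$ from swapping $G_6$ with a different simple.
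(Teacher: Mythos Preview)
Your proof is correct and follows essentially the same approach as the paper: for $\mathbb{D}$ you use that it sends a simple to the one corresponding to the dual local system (your treatment of $G_6$ via the self-duality of the sign character of $\mathbb{Z}/2\mathbb{Z}$ is actually more explicit than the paper's), and for $\mathcal{F}$ you use the witness weights from Theorem \ref{witnessWeights} together with the formula (\ref{fourier}), exactly as the paper indicates. The only cosmetic difference is that the paper does not separately invoke Lemma \ref{Bsub} or the involutivity of $\mathcal{F}$, since the witness weight argument already covers all cases uniformly (e.g.\ $\mathcal{F}((3,3))=(1,1)$ directly identifies $\mathcal{F}(D_1)\cong G_6$).
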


\begin{proof}
The holonomic duality functor $\mathbb{D}$ sends the simple module $M$ corresponding to the local system $\mathcal{M}$ on an orbit $O$ to the simple module corresponding to the dual local system $\mathcal{M}^{\ast}$. Since all the simple modules except $G_6$ correspond to the trivial local systems on their respective orbits, the first statement follows. The second assertion follows from the witness weight computations in Theorem \ref{witnessWeights} and the definition of the Fourier transform (\ref{fourier}) in Section \ref{D}.
\end{proof}

\subsection{The quiver structure of the category $\tn{mod}_{\tn{GL}}(\D)$}

For the remainder of the section, we prove the Theorem on the Quiver Structure. We begin by proving a couple of lemmas about which nontrivial extensions are possible between the simple objects. We refer to the following short exact sequences coming from the filtration of $S_h\cdot \sqrt{h}$:
\begin{equation}\label{sesG}
0\longrightarrow G_6\longrightarrow F\longrightarrow D_{1,2,2}\oplus D_{2,1,2}\oplus D_{2,2,1}\longrightarrow 0,
\end{equation}
\begin{equation}\label{sesF}
0\longrightarrow F\longrightarrow S_h\cdot \sqrt{h}\longrightarrow D_1\longrightarrow 0.
\end{equation}
It is important to note that by Lemma \ref{Bsub} and \cite[Lemma 2.4]{lHorincz2017equivariant}, $S_h$ is the injective hull of $S$ and $S_h\cdot \sqrt{h}$ is the injective hull of $G_6$ in $\textnormal{mod}_{\tn{GL}}(\D)$. 

\begin{lemma}\label{extBD}
For all $(i,j,k)=(1,2,2), (2,1,2), (2,2,1)$, we have the following in $\textnormal{mod}_{\tn{GL}}(\D)$:
\begin{equation}
\dim_{\C}\textnormal{Ext}_{\D}^1(D_{i,j,k},G_6)=1,\;\; \tn{and}\;\; \textnormal{Ext}_{\D}^1(D_{i,j,k},D_5)=0.
\end{equation}
\end{lemma}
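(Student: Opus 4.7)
The two claims have quite different flavors: the first is a direct injective-hull / socle computation, while the second reduces to a local cohomology vanishing.

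For the first, I exploit the fact, recalled just before the lemma, that $S_h\cdot\sqrt{h}$ is the injective hull of $G_6$ in $\tn{mod}_{\tn{GL}}(\D)$. Apply $\tn{Hom}_{\D}(D_{i,j,k},-)$ to the short exact sequence
$$0\to G_6\to S_h\cdot\sqrt{h}\to (S_h\cdot\sqrt{h})/G_6\to 0.$$
Injectivity of $S_h\cdot\sqrt{h}$ kills the $\tn{Ext}^1$ of the middle term, and the equality $\tn{soc}(S_h\cdot\sqrt{h})=G_6$ kills the Hom of the middle term. This yields $\tn{Ext}^1_{\D}(D_{i,j,k},G_6)\cong \tn{Hom}_{\D}(D_{i,j,k},(S_h\cdot\sqrt{h})/G_6)$. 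By Lemma~\ref{D1notSub} the sequence with submodule $D_{1,2,2}\oplus D_{2,1,2}\oplus D_{2,2,1}$ and quotient $D_1$ is non-split, so $D_1$ cannot sit in the socle of $(S_h\cdot\sqrt{h})/G_6$; hence that socle equals $D_{1,2,2}\oplus D_{2,1,2}\oplus D_{2,2,1}$, and the right-hand Hom is one-dimensional.

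For the second claim, set $Z=\overline{O_{i,j,k}}$, which has codimension three in $V$. The strategy is to show that for any extension $0\to D_5\to M\to D_{i,j,k}\to 0$ the module $M$ actually contains $D_{i,j,k}$ as a submodule; combined with the given embedding $D_5\subset M$ and a length count, this forces $M\cong D_5\oplus D_{i,j,k}$ and the extension splits. Applying the local cohomology functors $H^j_Z$ to the sequence and using $H^0_Z(D_5)=0$ (from simplicity of $D_5$ and $\overline{O_5}\not\subset Z$) together with $H^0_Z(D_{i,j,k})=D_{i,j,k}$, one reads off
$$0\to H^0_Z(M)\to D_{i,j,k}\to H^1_Z(D_5),$$
so the whole problem reduces to the vanishing $H^1_Z(D_5)=0$.

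This vanishing I would prove by traversing the filtration $S\subset \langle h^{-1}\rangle_{\D}\subset S_h$. Three inputs suffice: $H^i_Z(S)=0$ for $i<3$ (by~(\ref{codim}) and $\tn{codim}(Z,V)=3$); $H^i_Z(S_h)=0$ for all $i$, since $Z\subset V(h)$ makes each $H^i_Z(S)$ into an $h$-torsion module and local cohomology commutes with the exact localization $(-)_h$; and $H^0_Z(E)=0$, since $E$ is a simple $\D$-module with full support. The long exact sequence attached to $0\to \langle h^{-1}\rangle_{\D}\to S_h\to E\to 0$ then gives $H^1_Z(\langle h^{-1}\rangle_{\D})=0$, and the one attached to $0\to S\to \langle h^{-1}\rangle_{\D}\to D_5\to 0$ identifies $H^1_Z(D_5)$ with $H^1_Z(\langle h^{-1}\rangle_{\D})$, closing the argument. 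The only non-formal input is the commutation of $H^j_Z$ with localization at $h$; everything else is diagram chasing.
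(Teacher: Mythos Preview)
Your argument for $\dim_\C\tn{Ext}^1_\D(D_{i,j,k},G_6)=1$ is correct and essentially the paper's: where the paper factors through the module $F$ via the two sequences~(\ref{sesG}) and~(\ref{sesF}), you go directly to the quotient $(S_h\cdot\sqrt h)/G_6$ and identify its socle using Lemma~\ref{D1notSub}.

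For $\tn{Ext}^1_\D(D_{i,j,k},D_5)=0$ there is a genuine error. You claim $H^0_Z(E)=0$ ``since $E$ is a simple $\D$-module with full support,'' but $E=D_0$ is the simple supported on the \emph{zero} orbit $O_0=\{0\}$ (see the Classification of Simple Modules; $E=\mathcal F(S)$), so in fact $H^0_Z(E)=E$. Running your two long exact sequences with this correction gives $H^1_Z(\langle h^{-1}\rangle_\D)\cong E$ and then $H^1_Z(D_5)\cong E$, which is exactly what Proposition~\ref{locD5} records; your claimed vanishing $H^1_Z(D_5)=0$ is therefore false.

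Your splitting strategy is nonetheless easily repaired: in the exact fragment
\[
0\longrightarrow H^0_Z(M)\longrightarrow D_{i,j,k}\longrightarrow H^1_Z(D_5)=E
\]
the last arrow is a map between nonisomorphic simples, hence zero, so $H^0_Z(M)\cong D_{i,j,k}$ still sections the quotient $M\twoheadrightarrow D_{i,j,k}$ and the extension splits. The paper takes a different path here: it works in the subcategory $\tn{mod}^{\overline{O_5}}_{\tn{GL}}(\D)$, realises the injective hull of $D_5$ there as $j_*j^*D_5$ for $j\colon V\setminus(\overline{O_5}\setminus O_5)\hookrightarrow V$, and controls the composition factors of $H^1_{\overline{O_5}\setminus O_5}(D_5)$ via Mayer--Vietoris and Proposition~\ref{locD5}. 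Once corrected, your direct approach avoids that subcategory machinery.
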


\begin{proof}
We start by proving the first assertion. Applying $\textnormal{Hom}(D_{i,j,k},-)$ to (\ref{sesF}) yields $\dim_{\C}\textnormal{Ext}^1_{\D}(D_{i,j,k},F)=0$, since $S_h\cdot \sqrt{h}$ is injective and $\textnormal{Hom}(D_{i,j,k},D_1)=0$. Now applying $\textnormal{Hom}(D_{i,j,k},-)$ to (\ref{sesG}) yields the desired result since $\dim_{\C}\textnormal{Hom}(D_{i,j,k},D_{1,2,2}\oplus D_{2,1,2}\oplus D_{2,2,1})=1$ and $\textnormal{Ext}^1_{\D}(D_{i,j,k},F)=0$.

To prove that $\textnormal{Ext}^1_{\D}(D_{i,j,k},D_5)=0$ we will show that there are no nontrivial extensions between these two simples in the full subcategory $\tn{mod}_{\tn{GL}}^{\overline{O_5}}(\D)$ of modules with support contained in $\overline{O_5}$. Using bullet (5) in Section \ref{PrelimQuiver}, it suffices to show that for all $(i,j,k)$, the module $D_{i,j,k}$ is not a composition factor of the injective hull of $D_5$ in $\tn{mod}_{\tn{GL}}^{\overline{O_5}}(\D)$. Let $Z=\overline{O_5}\setminus O_5$ and let $j$ be the open immersion $j:V\setminus Z\hookrightarrow V$. By \cite[Lemma 2.4]{lHorincz2017equivariant}, the module $j_{\ast}j^{\ast}D_5$ is the injective hull of $D_5$ in $\textnormal{mod}_{\tn{GL}}^{\overline{O_5}}(\D)$. By (\ref{locSES}) with $M=D_5$, it suffices to show that $D_{i,j,k}$ is not a composition factor of $H^1_Z(D_5)$ for all $(i,j,k)$. Consider the Mayer-Vietoris sequence coming from the fact that $\overline{O_{i,j,k}}\cap \overline{O_{p,q,r}}=\overline{O_1}$ when $(i,j,k)\neq (p,q,r)$:
$$
\cdots \longrightarrow H^i_{\overline{O_1}}(D_5)\longrightarrow H^i_{\overline{O_{1,2,2}}}(D_5)\oplus H^i_{\overline{O_{2,1,2}}}(D_5)\longrightarrow H^i_{\overline{O_{1,2,2}}\cup \overline{O_{2,1,2}}}(D_5)\longrightarrow \cdots
$$
Using Proposition \ref{locD5}, we obtain that $H^1_{\overline{O_{1,2,2}}\cup \overline{O_{2,1,2}}}(D_5)=0$ (the proof of Proposition \ref{locD5} only relies on Theorem \ref{witnessWeights} and Proposition \ref{locS}, and does not use the Theorem on the Quiver Structure).
Now consider the Mayer-Vietoris sequence (again, coming from the fact that $\overline{O_{i,j,k}}\cap \overline{O_{p,q,r}}=\overline{O_1}$ when $(i,j,k)\neq (p,q,r)$):
$$
\cdots \longrightarrow H^i_{\overline{O_1}}(D_5)\longrightarrow H^i_{\overline{O_{1,2,2}}\cup \overline{O_{2,1,2}}}(D_5)\oplus H^i_{\overline{O_{2,2,1}}}(D_5)\longrightarrow H^i_Z(D_5)\longrightarrow \cdots
$$
By Proposition \ref{locD5}, and the fact that $H^1_{\overline{O_{1,2,2}}\cup \overline{O_{2,1,2}}}(D_5)=0$, we get $H^1_Z(D_5)=0$, completing the proof.
\end{proof}

\noindent After applying the holonomic duality functor $\mathbb{D}$, the previous Lemma yields: $\dim_{\C}\textnormal{Ext}^1(G_6,D_{i,j,k})=1$, and $\textnormal{Ext}^1(D_5,D_{i,j,k})=0$.

\begin{lemma}\label{newExt}
In $\tn{mod}_{\tn{GL}}(\D)$, we have $\textnormal{Ext}_{\D}^1(D_1,G_6)=0$. 
\end{lemma}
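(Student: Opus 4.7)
The plan is to exploit the fact that $S_h\cdot \sqrt{h}$ is the injective hull of $G_6$ in $\tn{mod}_{\tn{GL}}(\D)$, as recorded in the preamble to this lemma. Starting from the short exact sequence
$$
0 \longrightarrow G_6 \longrightarrow S_h\cdot \sqrt{h} \longrightarrow (S_h\cdot \sqrt{h})/G_6 \longrightarrow 0,
$$
I would apply $\tn{Hom}_{\D}(D_1,-)$ and use injectivity of $S_h\cdot \sqrt{h}$ to annihilate $\tn{Ext}^1_{\D}(D_1, S_h\cdot \sqrt{h})$. The resulting long exact sequence then reduces the computation of $\tn{Ext}^1_{\D}(D_1,G_6)$ to understanding the two Hom spaces on the other side of the connecting map.

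For the first Hom space, since $S_h\cdot \sqrt{h}$ is the injective hull of the simple module $G_6$, its socle is precisely $G_6$. Any nonzero map from the simple module $D_1$ would be an injection landing in this socle, forcing $D_1\cong G_6$ and contradicting the Classification of Simple Modules. Hence $\tn{Hom}_{\D}(D_1, S_h\cdot \sqrt{h})=0$, and the long exact sequence collapses to an isomorphism
$$
\tn{Ext}^1_{\D}(D_1,G_6)\;\cong\;\tn{Hom}_{\D}\!\left(D_1,\,(S_h\cdot \sqrt{h})/G_6\right).
$$

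The final step is to invoke Lemma \ref{D1notSub}, which already shows that $D_1$ is not a submodule of $(S_h\cdot \sqrt{h})/G_6$. Since $D_1$ is simple, any nonzero morphism from $D_1$ must be injective, so this Hom space vanishes and the conclusion follows. I do not anticipate a serious obstacle: the real work was carried out in establishing Lemma \ref{D1notSub}, where $D_1$ was forced to sit as a quotient rather than a submodule of $(S_h\cdot \sqrt{h})/G_6$ via the unique-simple-quotient property of the Bernstein--Sato filtration of $\langle h^{-3/2}\rangle_{\D}$.
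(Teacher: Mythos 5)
Your proof is correct and follows the same strategy as the paper: apply $\tn{Hom}_{\D}(D_1,-)$ to the short exact sequence $0\to G_6\to S_h\cdot\sqrt{h}\to(S_h\cdot\sqrt{h})/G_6\to 0$, use injectivity of $S_h\cdot\sqrt{h}$ to kill the outgoing $\tn{Ext}^1$, and conclude from $\tn{Hom}_{\D}(D_1,(S_h\cdot\sqrt{h})/G_6)=0$, which is exactly what Lemma~\ref{D1notSub} provides. The additional observation that $\tn{Hom}_{\D}(D_1,S_h\cdot\sqrt{h})=0$ is true (and upgrades the surjection to an isomorphism) but is not needed for the vanishing; the paper omits it.
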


\begin{proof}
Applying $\tn{Hom}(D_1,-)$ to the short exact sequence 
$$
0\longrightarrow G_6\longrightarrow S_h\cdot \sqrt{h}\longrightarrow (S_h\cdot \sqrt{h})/G_6\longrightarrow 0,
$$
yields the long exact sequence of $\tn{Ext}^{\bullet}_{\D}(D_1,-)$. Since $S_h\cdot \sqrt{h}$ is injective and $\tn{Hom}(D_1, (S_h\cdot \sqrt{h})/G_6)=0$, it follows that $\textnormal{Ext}_{\D}^1(D_1,G_6)=0$, as required.
\end{proof}

\begin{lemma}\label{noPath}
For all $(i,j,k)$ and $(p,q,r)$ we have $\tn{Ext}^1_{\D}(D_{i,j,k},D_{p,q,r})=0$.
\end{lemma}

\begin{proof}
Suppose for contradiction that there exists a nontrivial extension of $D_{i,j,k}$ by $D_{p,q,r}$ in $\tn{mod}_{\tn{GL}}(\D)$. By bullet (3) in Section \ref{PrelimQuiver}, such an extension would give an arrow  from $d_{i,j,k}$ to $d_{p,q,r}$ in the quiver with relations corresponding to the full subcategory $\tn{mod}^Y_{\tn{GL}}(\D)$, where $Y=\overline{O_{i,j,k}}\cup\overline{O_{p,q,r}}$.  By \cite[Corollary 3.9]{lHorincz2018categories} and Theorem \ref{witnessWeights}, there are no nontrivial paths from $d_{i,j,k}$ to $d_{p,q,r}$ in that quiver, yielding a contradiction.
\end{proof}

The following statement was suggested to us by Andr\'{a}s L\H{o}rincz.

\begin{lemma}\label{correctionLemma}
For each $(i,j,k)$, let $P_{i,j,k}$ denote the projective cover of $D_{i,j,k}$ in 	$\tn{mod}_{\tn{GL}}(\D)$. For all $(p,q,r)$, the conormal variety $[\overline{T^{\ast}_{O_{p,q,r}}V}]$ appears with multiplicity one in the characteristic cycle $\operatorname{charC}(P_{i,j,k})$.
\end{lemma}

\begin{proof}
By symmetry, it suffices to prove the result for $(i,j,k)=(1,2,2)$. Let $V=\bS_{(3,1)}A\otimes \bS_{(2,2)}B\otimes \bS_{(2,2)}C$, and let $P(V)$ be the projective module of \cite[Section 2.1]{lHorincz2018categories}. Since $V$ is a witness weight space for $D_{1,2,2}$ with multiplicity one, it follows that $P_{1,2,2}\cong P(V)$ is the projective cover of $D_{1,2,2}$ \cite[Lemma 2.1(a), Proposition 2.7]{lHorincz2018categories}.	

Let $\mathfrak{g}=\mathfrak{gl}(A)\times \mathfrak{gl}(B)\times \mathfrak{gl}(C)$ be the Lie algebra of $\textnormal{GL}$, with $\mathbb{C}$-basis $X_{i,j}$, $Y_{i,j}$, $Z_{i,j}$ ($1\leq i,j\leq 2$), and universal enveloping algebra $U(\mathfrak{g})$. Given a highest weight vector $v$ of $V$, we have the following isomorphism of $\D$-modules \cite[Equation (2.5)]{lHorincz2018categories}:
\begin{equation}\label{gensP}
P_{1,2,2}\cong \frac{\mathcal{D}}{\mathcal{D}\langle\operatorname{Ann}_{U(\mathfrak{g})}v\rangle},
\end{equation}
where $\mathfrak{g}$ maps to the Weyl algebra $\D=\C\langle x_{i,j,k},\partial_{i,j,k}\mid 1\leq i,j,k\leq 2\rangle$ via 
$$
X_{i,j} \mapsto \sum_{1\leq k,l\leq 2} x_{i,k,l}\partial_{j,k,l},\;\;\;\;\; Y_{i,j}\mapsto \sum_{1\leq k,l\leq 2} x_{k,i,l}\partial_{k,j,l},\;\;\;\;\; Z_{i,j}\mapsto \sum_{1\leq k,l\leq 2} x_{k,l,i}\partial_{k,l,j}.
$$
Using (\ref{gensP}) and the description of the generators of $\operatorname{Ann}_{U(\mathfrak{g})}v$ \cite[Theorem 21.4]{MR0323842}, we calculate the ideal of the characteristic variety of $P_{1,2,2}$ using Macaulay2 \cite{M2}:

\begin{verbatim}
loadPackage "Dmodules"
S=QQ[x_(1,1,1)..x_(2,2,2)];
W=makeWeylAlgebra S;

X = (i,j) -> (
x_(i,1,1)*dx_(j,1,1)+x_(i,1,2)*dx_(j,1,2)+x_(i,2,1)*dx_(j,2,1)+x_(i,2,2)*dx_(j,2,2));
Y = (i,j) -> (
x_(1,i,1)*dx_(1,j,1)+x_(1,i,2)*dx_(1,j,2)+x_(2,i,1)*dx_(2,j,1)+x_(2,i,2)*dx_(2,j,2));
Z = (i,j) -> (
x_(1,1,i)*dx_(1,1,j)+x_(1,2,i)*dx_(1,2,j)+x_(2,1,i)*dx_(2,1,j)+x_(2,2,i)*dx_(2,2,j));

I=ideal(X(1,1)+1,X(1,2),(X(2,1))^3,X(2,2)+3);
I=I+ideal(Y(1,1)+2,Y(1,2),Y(2,1),Y(2,2)+2);
I=I+ideal(Z(1,1)+2,Z(1,2),Z(2,1),Z(2,2)+2);
J=charIdeal I;
\end{verbatim}
Using the above calculated characteristic ideal of $P_{1,2,2}$, we verify that $[\overline{T^{\ast}_{O_{1,2,2}}V}]$, $[\overline{T^{\ast}_{O_{2,1,2}}V}]$ and $[\overline{T^{\ast}_{O_{2,2,1}}V}]$ appear in the characteristic cycle of $P_{1,2,2}$.
\begin{verbatim}
sub(J, matrix{{1,0,0,1,0,0,0,0,0,0,0,0,1,0,0,-1}}) == 0
sub(J, matrix{{1,0,0,0,0,1,0,0,0,0,1,0,0,0,0,-1}}) == 0
sub(J, matrix{{1,0,0,0,0,0,1,0,0,1,0,0,0,0,0,-1}}) == 0
\end{verbatim}
where we used that $\overline{O_{1,2,2}}$, $\overline{O_{2,1,2}}$, and $\overline{O_{2,2,1}}$ flatten to $4\times 2$ matrices of rank $\leq 1$, and the description \cite{MR653906} of the conormal variety to a determinantal variety.

It remains to show that these conormal varieties appear with multiplicity one. Using notation from (\ref{gel}), we let $H$ be the subgroup of $\textnormal{GL}$ defined by $z_{2,1}=0$. Then the highest weight vector of $V$ is $H$-semi-invariant, so by \cite[Lemma 3.12]{lHorincz2018categories} and the proof of \cite[Proposition 3.14]{lHorincz2018categories}, it suffices to show that each $[\overline{T^{\ast}_{O_{p,q,r}}V}]$ has a dense $H$ orbit. Using Macaulay2, one can show that the dimension of the $H$-stabilizer of each of the representatives appearing in the Macaulay2 code above have dimension three. Since $H$ has dimension eleven, and each conormal has dimension eight, it follows that these orbits must be dense.
\end{proof}

We now prove the Theorem on Quiver Structure.

\begin{proof}
[Proof of Theorem on the Quiver Structure] Write $s$, $e$, $d_5$, $g_6$, and $d_{i,j,k}$ for the vertices in $(\mathcal{Q},\mathcal{I})$ corresponding to the simple objects $S$, $E$, $D_5$, $G_6$, and $D_{i,j,k}$ respectively. Recall the facts (1)-(5) in Section \ref{PrelimQuiver}. Since $S_h$ is the injective hull of $S$, by \cite[Lemma 4.1]{lHorincz2017equivariant} and Theorem \ref{witnessWeights} we have $\dim_{\C}\textnormal{Ext}^1_{\D}(D_5,S)=1$ and $\textnormal{Ext}^1_{\D}(E,S)=0$. Thus, by bullet (3) in Section \ref{PrelimQuiver}, there is a unique arrow $\psi_0$ from $d_5$ to $s$ and no arrow from $e$ to $s$. Applying the Fourier transform, it follows that there is a unique arrow $\varphi_1$ from $d_5$ to $e$, and no arrows from $s$ to $e$ (by bullet (4) in Section \ref{PrelimQuiver}). Using the holonomic duality functor, we obtain a unique arrow $\varphi_0$ from $s$ to $d_5$, and a unique arrow $\psi_1$ from $e$ to $d_5$ (again, by bullet (4)). Since $S_h$ is the injective hull of $S$, and $S_h$ has composition factors $S$, $D_5$, and $E$, each with multiplicity one, we obtain the relations $\varphi_0\psi_0,\psi_0\varphi_0,\varphi_1\psi_1,\psi_1\varphi_1$ (by bullet (5)).

Now we consider the vertices of $(\mathcal{Q},\mathcal{I})$ corresponding to the composition factors of $S_h\cdot \sqrt{h}$. By Lemma \ref{extBD}, we have $\dim_{\C}\textnormal{Ext}^1(D_{i,j,k},G_6)=1$ for all $(i,j,k)$. Thus, there are unique arrows $\alpha_{i,j,k}$ from $d_{i,j,k}$ to $g_6$ (by bullet (3)). Applying the Fourier transform, we obtain unique arrows $\gamma_{i,j,k}$ from $d_{i,j,k}$ to $d_1$ (by bullet (4)). Using the holonomic duality functor, it follows that there are unique arrows $\beta_{i,j,k}$ from $g_6$ to $d_{i,j,k}$ and $\delta_{i,j,k}$ from $d_1$ to $d_{i,j,k}$ (by bullet (4)). By Lemma \ref{extBD}, Lemma \ref{newExt}, and bullet (3), there are no arrows between the pairs $(d_1,g_6)$, $(d_5, d_{i,j,k})$. By Lemma \ref{noPath}, there are no arrows between $d_{i,j,k}$ and $d_{p,q,r}$ for all $(i,j,k)$ and $(p,q,r)$. We claim that there are no other arrows in $\mathcal{Q}$. Using the Fourier transform and the duality functor, this reduces to showing the following:
$$
\textnormal{Ext}^1_{\D}(S,G_6)=\textnormal{Ext}^1_{\D}(D_5,G_6)=\textnormal{Ext}^1_{\D}(D_1,S)=\textnormal{Ext}^1_{\D}(D_{i,j,k},S)=0
$$
for all $(i,j,k)$. Since $D_1$, and $D_{i,j,k}$ are not composition factors of $S_h$, the injective hull of $S$, there are no paths from their corresponding vertices in $\mathcal{Q}$ to $s$. Similarly, since $S$ and $D_5$ are not composition factors of $S_h\cdot \sqrt{h}$, the injective hull of $G_6$, there are no paths from their corresponding vertices to $g_6$. 

In remains to verify the asserted relations on the second connected component of $(\mathcal{Q},\mathcal{I})$, i.e. the connected component corresponding to the composition factors of $S_h\cdot \sqrt{h}$. Since $S_h\cdot \sqrt{h}$ is the injective hull of $G_6$, and each composition factor $G_6$, $D_{1,2,2}$, $D_{2,1,2}$, $D_{2,2,1}$, $D_1$ appears with multiplicity one, bullet (5) implies the asserted relations 
$\alpha_{i,j,k}\delta_{i,j,k}-\alpha_{p,q,r}\delta_{p,q,r}$, and $\alpha_{i,j,k}\beta_{i,j,k}$. Applying the duality functor yields the asserted relations $\gamma_{i,j,k}\beta_{i,j,k}-\gamma_{p,q,r}\beta_{p,q,r}$, and $\gamma_{i,j,k}\delta_{i,j,k}$. Next, since there are no paths in the quiver between $d_5$ and $d_{i,j,k}$ for each $(i,j,k)$, it follows from Theorem \ref{witnessWeights}, Lemma \ref{correctionLemma}, and bullet (5) that $D_{1,2,2}$, $D_{2,1,2}$, and $D_{2,2,1}$ are composition factors of the projective cover $P_{i,j,k}$ of $D_{i,j,k}$ for each $(i,j,k)$. Furthermore, since the conormal varieties $[\overline{T^{\ast}_{O_{p,q,r}}V}]$ appear in the characteristic cycle of $P_{i,j,k}$ with multiplicity one, it follows that each $D_{p,q,r}$ has multiplicity one in each $P_{i,j,k}$. Therefore, by bullet (5), we have the relations $\beta_{i,j,k}\alpha_{p,q,r}-\delta_{i,j,k}\gamma_{p,q,r}$, and $\beta_{i,j,k}\alpha_{i,j,k}$, and $\delta_{i,j,k}\gamma_{i,j,k}$. 

We only need to verify that there are no further relations on the second component of $(\mathcal{Q},\mathcal{I})$. The only possibilities  would be relations of the form $\beta_{i,j,k}\alpha_{p,q,r}$ or $\delta_{i,j,k}\gamma_{p,q,r}$ for some $(p,q,r)\neq (i,j,k)$. If $\beta_{i,j,k}\alpha_{p,q,r}$ were a relation, the relation $\beta_{i,j,k}\alpha_{p,q,r}-\delta_{i,j,k}\gamma_{p,q,r}$ would then imply that $\delta_{i,j,k}\gamma_{p,q,r}$ is a relation, which in turn would imply that there are no paths from $d_{p,q,r}$ to $d_{i,j,k}$, contradicting bullet (5) and Lemma \ref{correctionLemma}. Similarly, $\delta_{i,j,k}\gamma_{p,q,r}$ cannot be a relation. Therefore, we have found all relations, completing the proof.
\end{proof}

\section{Local Cohomology Computations}\label{Local}

We complete our analysis by computing local cohomology of some $\tn{GL}$-equivariant $\D$-modules, with support in each orbit closure. Note that for $Z\subseteq V$ a closed subvariety and $M$ a module with support contained in $Z$, we have $H^0_Z(M)=M$ and $H^j_Z(M)=0$ for $j\geq 1$. We will not discuss these cases further. We begin with the following lemma about the local cohomology of $S_h$ and $S_h\cdot \sqrt{h}$, the proof of which is analogous to the proof of \cite[Lemma 6.9]{lHorincz2018iterated}, replacing $\tn{det}$ with $h$.

\begin{lemma}\label{newLemma}
For all $j\geq 0$ and all orbits $O\neq O_6$ we have $H^j_{\overline{O}}(S_h)=H^j_{\overline{O}}(S_h\cdot \sqrt{h})=0$.
\end{lemma}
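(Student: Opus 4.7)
The plan is to exploit the fact that both $S_h$ and $S_h\cdot \sqrt{h}$ are modules on which the hyperdeterminant $h$ acts invertibly, while every non-dense orbit closure is contained in the hypersurface $V(h)=\overline{O_5}$. The first step is to record the set-theoretic containments: since $h$ is the defining equation of $\overline{O_5}$ and $O_6$ is the open orbit on which $h$ does not vanish, every orbit $O\neq O_6$ satisfies $\overline{O}\subseteq \overline{O_5}=V(h)$, so that $h\in\sqrt{I_{\overline{O}}}$. This geometric input uses only the description of the orbits in the introduction.

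The core of the argument is the standard fact that for any module $M$ and any closed subvariety $Z$, the local cohomology $H^j_Z(M)$ is $I_Z$-torsion: for example, using $H^j_Z(M)=\varinjlim_n \mathrm{Ext}^j_S(S/I_Z^n,M)$, each $\mathrm{Ext}^j_S(S/I_Z^n,M)$ is annihilated by $I_Z^n$. Taking $Z=\overline{O}$ for $O\neq O_6$, every element of $H^j_{\overline{O}}(M)$ is therefore killed by some power of $h$. On the other hand, $h$ acts invertibly on $M\in\{S_h,\,S_h\cdot \sqrt{h}\}$---by definition for $S_h$, and because $h^{-1}\cdot (a\sqrt{h})=(h^{-1}a)\sqrt{h}\in S_h\cdot \sqrt{h}$ for the twisted module---so the additive functor $H^j_{\overline{O}}(-)$ turns multiplication by $h$ into an automorphism of $H^j_{\overline{O}}(M)$. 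Reconciling ``killed by a power of $h$'' with ``$h$ acts invertibly'' forces $H^j_{\overline{O}}(M)=0$, as desired.

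The only subtlety worth flagging is the invertibility of $h$ on $S_h\cdot \sqrt{h}$, which one may also verify by embedding $S_h\cdot \sqrt{h}$ inside $S_h[\sqrt{h}]$, where $h=(\sqrt{h})^2$ is manifestly a unit. Everything else is routine bookkeeping, and this is the argument of \cite[Lemma 6.9]{lHorincz2018iterated} with $\det$ replaced by $h$.
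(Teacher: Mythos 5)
Your argument is correct and is the standard one: every orbit closure other than $V$ lies in $V(h)=\overline{O_5}$, local cohomology with support in such a set is $h$-power torsion, and $h$ acts invertibly on both $S_h$ and $S_h\cdot\sqrt{h}$, so the torsion module must vanish. The paper gives no details, merely citing that the proof is analogous to \cite[Lemma 6.9]{lHorincz2018iterated} with $\det$ replaced by $h$, and that cited proof is precisely this torsion-versus-invertibility argument, so you have reproduced the intended reasoning.
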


\noindent Next, we study the local cohomology of $S$. These computations are standard, but we include them for the sake of completeness. Along the way, we obtain the local cohomology of $D_1$ with support in $O_0$.

\begin{prop}\label{locS}
We have the following:
$$
H^{\bullet}_{O_0}(S)=
\begin{cases}
E & \bullet=8,\\
0 & \textnormal{otherwise},
\end{cases}\;\;\;
H^{\bullet}_{\overline{O_1}}(S)=
\begin{cases}
D_1 & \bullet=4,\\
0 & \textnormal{otherwise},
\end{cases}\;\;\;
H^{\bullet}_{\overline{O_5}}(S)=
\begin{cases}
S_h/S & \bullet=1,\\
0 & \textnormal{otherwise},
\end{cases}
$$
$$
H^{\bullet}_{O_0}(D_1)=
\begin{cases}
E & \bullet=4,\\
0 & \textnormal{otherwise},
\end{cases}\;\;\;
H^{\bullet}_{\overline{O_{i,j,k}}}(S)=
\begin{cases}
D_{i,j,k} & \bullet=3,\\
E & \bullet=5,\\
0 & \textnormal{otherwise},
\end{cases}
$$
for $(i,j,k)=(1,2,2)$, $(2,1,2)$, $(2,2,1)$.
\end{prop}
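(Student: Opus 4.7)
The five cases split into three styles of argument. The cases of $O_0$ and $\overline{O_5}$ are standard: since $\textnormal{codim}(\{0\},V) = \dim V = 8$, we have $H^j_{\{0\}}(S) = 0$ for $j \neq 8$, and $H^8_{\{0\}}(S)$ is the unique simple $\mathcal{D}_V$-module supported at the origin, which by the Classification of Simple Modules is $E = D_0$. For $\overline{O_5} = V(h)$, the complementary open is affine, so higher direct images under the open immersion vanish, and the local cohomology sequence (\ref{locSES}) yields $H^0_{\overline{O_5}}(S) = 0$ (since $S$ is a domain), $H^1_{\overline{O_5}}(S) = S_h/S$, and zero elsewhere. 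The $\overline{O_{i,j,k}}$ cases follow from \cite{raicu2014locals}: flattening $V = A\otimes B\otimes C$ as $A\otimes (B\otimes C)$ (and its symmetric variants) identifies $\overline{O_{i,j,k}}$ with the variety of $2\times 4$ matrices of rank at most one, whose local cohomology is nonzero only in degrees $3$ and $5$, identified via equivariant structure with $D_{i,j,k}$ and $E$ respectively.

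The heart of the proof is the $\overline{O_1}$ case. The plan is to combine the filtration $0 \subsetneq S \subsetneq \langle h^{-1}\rangle_{\mathcal{D}} \subsetneq S_h$ from Theorem \ref{witnessWeights} with the vanishing $R\Gamma_{\overline{O_1}}(S_h) = 0$ from Lemma \ref{newLemma}. The exact triangle from $0 \to S \to S_h \to S_h/S \to 0$ then gives $H^j_{\overline{O_1}}(S) \cong H^{j-1}_{\overline{O_1}}(S_h/S)$, reducing the problem to the intermediate quotient. Proposition \ref{IHmod} supplies the structural constraint: $D_1$ is the unique simple submodule of $H^4_{\overline{O_1}}(S)$, and all other composition factors of $H^j_{\overline{O_1}}(S)$ for $j \geq 4$ are supported on the singular locus $\{0\}$ of $\overline{O_1}$, hence are copies of $E$. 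To control these, I would apply $R\Gamma_{\overline{O_1}}$ to the short exact sequence $0 \to D_5 \to S_h/S \to E \to 0$, using $R\Gamma_{\overline{O_1}}(E) = E$ in degree zero (since $E$ is supported at $\{0\} \subseteq \overline{O_1}$), and then invoke the codimension vanishing $H^j_{\overline{O_1}}(S) = 0$ for $j < 4$ to pin down low-degree terms. To rule out further copies of $E$ in higher degrees, I would use a character-theoretic multiplicity argument in the spirit of Lemma \ref{multSh}, combined with the Fourier pairing $\mathcal{F}(S) = E$ and $\mathcal{F}(D_1) = G_6$ from the Classification of Simple Modules.

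Finally, the computation of $H^\bullet_{O_0}(D_1)$ follows from the $\overline{O_1}$ case via the Grothendieck spectral sequence for iterated local cohomology
$$
E_2^{p,q} = H^p_{\{0\}}\!\left(H^q_{\overline{O_1}}(S)\right) \Longrightarrow H^{p+q}_{\{0\}}(S),
$$
whose abutment, by the $O_0$ case, is $E$ concentrated in total degree $8$. Once $H^q_{\overline{O_1}}(S) = D_1 \cdot \delta_{q,4}$ is established, the sequence collapses onto the single nontrivial row $q = 4$, giving $H^p_{\{0\}}(D_1) = H^{p+4}_{\{0\}}(S)$, which equals $E$ for $p = 4$ and vanishes otherwise. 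The main obstacle is therefore the elimination of extra copies of $E$ in $H^j_{\overline{O_1}}(S)$ for $j \geq 5$ in the $\overline{O_1}$ case, which requires the multiplicity and long-exact-sequence bookkeeping sketched above.
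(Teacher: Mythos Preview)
Your treatment of $O_0$, $\overline{O_5}$, $\overline{O_{i,j,k}}$, and the spectral-sequence deduction of $H^\bullet_{O_0}(D_1)$ matches the paper's proof exactly. The divergence is in the $\overline{O_1}$ case, and there your sketch has a genuine gap.

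The paper does \emph{not} approach $H^\bullet_{\overline{O_1}}(S)$ through the filtration of $S_h$ or Lemma~\ref{newLemma}. Instead it observes that $\overline{O_1}$ is the affine cone over the smooth projective variety $\textnormal{Seg}(\mathbb{P}(A)\times\mathbb{P}(B)\times\mathbb{P}(C))$ and invokes \cite[Main Theorem~1.2]{switala2015lyubeznik}, which forces $H^j_{\overline{O_1}}(S)=0$ for all $j\neq 4$ in one stroke. The identification $H^4_{\overline{O_1}}(S)=D_1$ then follows from Proposition~\ref{IHmod} together with $\textnormal{Ext}^1_{\D}(E,D_1)=0$ (obtained by applying $\mathcal{F}$ to $\textnormal{Ext}^1_{\D}(S,G_6)=0$, which holds since $S$ is not a composition factor of the injective hull $S_h\cdot\sqrt{h}$ of $G_6$): any copy of $E$ in $H^4$ would split off and contradict uniqueness of the simple socle.

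Your route via $H^j_{\overline{O_1}}(S)\cong H^{j-1}_{\overline{O_1}}(S_h/S)\cong H^{j-1}_{\overline{O_1}}(D_5)$ (for $j\geq 3$) is formally correct, but it only trades one unknown for another: in the paper the computation of $H^\bullet_{\overline{O_1}}(D_5)$ (Proposition~\ref{locD5}) is \emph{deduced from} Proposition~\ref{locS}, so you cannot use it here without circularity. The step you flag as the ``main obstacle'' --- ruling out copies of $E$ in $H^j_{\overline{O_1}}(S)$ for $j\geq 5$ --- is not handled by what you propose. A character argument in the style of Lemma~\ref{multSh} controls only alternating sums, not individual cohomology groups, and the Fourier transform does not interact with $\Gamma_{\overline{O_1}}$ in any way that bounds cohomological degree. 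The missing ingredient is precisely a vanishing theorem for cones over smooth projective varieties; once you have that, the Ext argument you allude to (via $\mathcal{F}$) finishes the job for $H^4$.
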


\begin{proof}
Since $S$ is a polynomial ring of dimension eight, the computation of $H^{\bullet}_{O_0}(S)$ is classical. To compute $H^{\bullet}_{\overline{O_1}}(S)$, recall that $\overline{O_1}$ is the affine cone over the Segre variety $\textnormal{Seg}(\mathbb{P}(A)\times \mathbb{P}(B)\times \mathbb{P}(C))$, a smooth variety. By \cite[Main Theorem 1.2]{switala2015lyubeznik}, the modules $H^j_{\overline{O_1}}(S)$ for $j\neq 4$ are zero. By bullet (5) in Section \ref{PrelimQuiver}, we have $\tn{Ext}^1_{\D}(S,G_6)=0$, and applying the Fourier transform we get $\tn{Ext}_{\D}^1(E,D_1)=0$ (alternatively we may use the Theorem on the Quiver Structure). By Proposition \ref{IHmod}, $H^4_{\overline{O_1}}(S)=D_1$, yielding the computation of $H^{\bullet}_{\overline{O_1}}(S)$. The spectral sequence $H^i_{O_0}(H^j_{\overline{O_1}}(S))\Rightarrow H^{i+j}_{O_0}(S)$ gives the computation of $H^{\bullet}_{O_0}(D_1)$, and the computation of $H^{\bullet}_{\overline{O_5}}(S)$ follows immediately from the \v{C}ech cohomology description of local cohomology. Finally, if we identify $\overline{O_{i,j,k}}$ with the determinantal variety of $2\times 4$ matrices of rank $\leq 1$ (see Section \ref{PrelimWit}), the computation of $H^{\bullet}_{\overline{O_{i,j,k}}}(S)$ is done in \cite[Theorem 6.1]{raicu2014locals}.
\end{proof}

We now compute the local cohomology of $D_5$ with support in each orbit closure.

\begin{prop}\label{locD5}
For all orbit closures $\overline{O}\neq V$, we have $H^{\bullet}_{\overline{O}}(\langle h^{-1}\rangle_{\D})=H^1_{\overline{O}}(\langle h^{-1}\rangle_{\D})=E$. Further, we have the following:
$$
H^{\bullet}_{O_0}(D_5)=
\begin{cases}
E & \bullet=1,7,\\
0 & \textnormal{otherwise},
\end{cases}\;\;\;
H^{\bullet}_{\overline{O_1}}(D_5)=
\begin{cases}
E & \bullet=1,\\
D_1 & \bullet=3,\\
0 & \textnormal{otherwise},
\end{cases}\;\;\;
H^{\bullet}_{\overline{O_{i,j,k}}}(D_5)=
\begin{cases}
E & \bullet=1,4,\\
D_{i,j,k} & \bullet=2,\\
0 & \textnormal{otherwise},
\end{cases}
$$
for $(i,j,k)=(1,2,2)$, $(2,1,2)$, $(2,2,1)$.
\end{prop}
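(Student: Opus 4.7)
The plan is to exploit the two short exact sequences coming from the filtration $0\subsetneq S\subsetneq \langle h^{-1}\rangle_{\D}\subsetneq S_h$ established in Theorem \ref{witnessWeights}, namely
$$
0\longrightarrow \langle h^{-1}\rangle_{\D}\longrightarrow S_h\longrightarrow E\longrightarrow 0\quad\text{and}\quad 0\longrightarrow S\longrightarrow \langle h^{-1}\rangle_{\D}\longrightarrow D_5\longrightarrow 0,
$$
and to feed the local cohomology of the outer terms through the associated long exact sequences. The local cohomology of the outer terms is already available: $H^\bullet_{\overline{O}}(S_h)$ vanishes for every orbit closure $\overline{O}\neq V$ by Lemma \ref{newLemma}; $H^\bullet_{\overline{O}}(S)$ is given by Proposition \ref{locS}; and $E$ is supported at the origin, so $H^0_{\overline{O}}(E)=E$ and $H^j_{\overline{O}}(E)=0$ for $j\geq 1$, for every orbit closure.

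The first step is to compute $H^\bullet_{\overline{O}}(\langle h^{-1}\rangle_{\D})$ for all orbit closures $\overline{O}\neq V$. The long exact sequence of $H^\bullet_{\overline{O}}$ applied to the first short exact sequence, combined with the vanishing of $H^\bullet_{\overline{O}}(S_h)$, produces natural isomorphisms $H^j_{\overline{O}}(\langle h^{-1}\rangle_{\D})\cong H^{j-1}_{\overline{O}}(E)$ for every $j$. Using $H^0_{\overline{O}}(E)=E$ and the vanishing of higher local cohomology of $E$, this immediately yields $H^1_{\overline{O}}(\langle h^{-1}\rangle_{\D})=E$ and $H^j_{\overline{O}}(\langle h^{-1}\rangle_{\D})=0$ for $j\neq 1$, establishing the first assertion of the proposition.

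The second step is to apply $H^\bullet_{\overline{O}}$ to the second short exact sequence and unwind the resulting long exact sequence using Proposition \ref{locS} and the computation from the first step. For $\overline{O}=O_0$, one inputs $H^j_{O_0}(S)=E$ for $j=8$ and zero otherwise together with $H^j_{O_0}(\langle h^{-1}\rangle_{\D})=E$ for $j=1$ and zero otherwise, reading off $H^1_{O_0}(D_5)=E$ from the low-degree segment and $H^7_{O_0}(D_5)=E$ from the connecting map to $H^8_{O_0}(S)$, with all other cohomology zero. For $\overline{O}=\overline{O_1}$, the only nonzero term of $H^\bullet_{\overline{O_1}}(S)$ is $D_1$ in degree $4$ and the only nonzero term of $H^\bullet_{\overline{O_1}}(\langle h^{-1}\rangle_{\D})$ is $E$ in degree $1$; the long exact sequence separates these, giving $H^1_{\overline{O_1}}(D_5)=E$ and $H^3_{\overline{O_1}}(D_5)=D_1$. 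For $\overline{O}=\overline{O_{i,j,k}}$, the nonzero terms of $H^\bullet_{\overline{O_{i,j,k}}}(S)$ are $D_{i,j,k}$ in degree $3$ and $E$ in degree $5$, which together with $H^1_{\overline{O_{i,j,k}}}(\langle h^{-1}\rangle_{\D})=E$ yield, by direct inspection of the connecting maps, $H^1=E$, $H^2=D_{i,j,k}$, $H^4=E$, and all other cohomology zero.

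The computations above are essentially routine bookkeeping once the two inputs are in hand; the only real obstacle is the appeal to Lemma \ref{newLemma}, which is the one nontrivial ingredient (its proof is cited as analogous to \cite[Lemma 6.9]{lHorincz2018iterated}). One also needs to verify that the proof of this proposition does not require the Theorem on the Quiver Structure, so that it may safely be invoked in the proof of Lemma \ref{extBD}; this is clear because the argument uses only Theorem \ref{witnessWeights}, Lemma \ref{newLemma}, Proposition \ref{locS}, and the long exact sequence of local cohomology.
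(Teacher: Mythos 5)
Your proof is correct and follows essentially the same approach as the paper's: both use the two short exact sequences from the filtration $S\subsetneq \langle h^{-1}\rangle_{\D}\subsetneq S_h$, deduce the first assertion from Lemma \ref{newLemma} and the long exact sequence, and then feed Proposition \ref{locS} together with the first assertion through the long exact sequence for the second part. Your write-up simply unpacks the bookkeeping that the paper leaves implicit.
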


\begin{proof}
The computations for $\langle h^{-1}\rangle_{\D}$ follow easily from the short exact sequence coming from the inclusion $\langle h^{-1}\rangle_{\D}\subset S_h$, using Lemma \ref{newLemma} and the long exact sequence of local cohomology. For the computations with $D_5$, consider the short exact sequence coming from the inclusion $S\subseteq \langle h^{-1}\rangle_D$. The results follow from the long exact sequence of local cohomology, using the first assertion and Proposition \ref{locS}. 
\end{proof}

Now we may finish the analysis of local cohomology of each $D_{i,j,k}$:

\begin{prop}\label{locGreenPurp}
For $(i,j,k)=(1,2,2)$, $(2,1,2)$, $(2,2,1)$ we have the following:
$$
H^{\bullet}_{O_0}(D_{i,j,k})=
\begin{cases}
E & \bullet=3,5,\\
0 & \textnormal{otherwise},
\end{cases}\;\;\;\;\;
H^{\bullet}_{\overline{O_1}}(D_{i,j,k})=H^{\bullet}_{\overline{O_{p,q,r}}}(D_{i,j,k})=
\begin{cases}
D_1 & \bullet=1,\\
E & \bullet=3,\\
0 & \textnormal{otherwise},
\end{cases}
$$
for $(i,j,k)\neq (p,q,r)$.
\end{prop}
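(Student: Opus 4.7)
The plan is to exploit the $\mathbb{Z}/3\mathbb{Z}$-symmetry of $V$ permuting $A,B,C$ (used already in the proof of Lemma \ref{D1notSub}), which carries $D_{1,2,2},D_{2,1,2},D_{2,2,1}$ to one another and fixes $O_0$ and $\overline{O_1}$ set-wise. This reduces the task to computing $H^\bullet_{\overline{O}}(D_{1,2,2})$ for each orbit closure $\overline{O}$. Moreover, for $(p,q,r)\neq (1,2,2)$ we have $\overline{O_{p,q,r}}\cap \overline{O_{1,2,2}}=\overline{O_1}$, and since $D_{1,2,2}$ is supported in $\overline{O_{1,2,2}}$ this gives $H^\bullet_{\overline{O_{p,q,r}}}(D_{1,2,2})=H^\bullet_{\overline{O_1}}(D_{1,2,2})$. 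So only the cases $\overline{O}=O_0$ and $\overline{O}=\overline{O_1}$ are essential.

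For each of these I will apply the Grothendieck composition-of-functors spectral sequence
$$E_2^{p,q}=H^p_{\overline{O}}(H^q_{\overline{O_{1,2,2}}}(S))\;\Longrightarrow\; H^{p+q}_{\overline{O}}(S).$$
By Proposition \ref{locS}, the $E_2$ page is concentrated in two rows: the row $q=3$ contains $E_2^{p,3}=H^p_{\overline{O}}(D_{1,2,2})$ (the desired unknowns), and the row $q=5$ contains $E_2^{p,5}=H^p_{\overline{O}}(E)$. Since $E$ is supported at the origin and the origin lies in $\overline{O}$, we have $E_2^{0,5}=E$ and $E_2^{p,5}=0$ for $p\geq 1$. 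With only these two rows populated, the only differential on any page that can be nonzero is $d_3\colon E_3^{0,5}\to E_3^{3,3}$.

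The abutment $H^{\bullet}_{\overline{O}}(S)$ is again given by Proposition \ref{locS}: it equals $E$ in total degree $8$ for $\overline{O}=O_0$, and $D_1$ in total degree $4$ for $\overline{O}=\overline{O_1}$, vanishing otherwise. A total-degree-by-total-degree comparison then pins everything down. Vanishing of the abutment in the irrelevant degrees forces $H^p_{\overline{O}}(D_{1,2,2})=0$ outside the expected positions; matching the abutment in total degrees $4$ (for $\overline{O_1}$) and $8$ (for $O_0$) yields $H^1_{\overline{O_1}}(D_{1,2,2})=D_1$ and $H^5_{O_0}(D_{1,2,2})=E$; and the cancellation of total degrees $5$ and $6$ forces $d_3\colon E\to H^3_{\overline{O}}(D_{1,2,2})$ to be both injective and surjective, yielding $H^3_{\overline{O}}(D_{1,2,2})=E$. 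The only modest obstacle is pinning down the $d_3$ differential, but its behavior is completely forced by the sparse shape of the $E_2$ page together with the known abutment, so there is no serious difficulty.
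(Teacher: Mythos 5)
Your proposal is correct and uses essentially the same spectral sequence $H^p_{\overline{O}}(H^q_{\overline{O_{i,j,k}}}(S))\Rightarrow H^{p+q}_{\overline{O}}(S)$ that the paper uses, fed by Proposition \ref{locS}, with the two-row degeneration argument carried out in exactly the way one should. The one small deviation is in the case $\overline{O}=\overline{O_{p,q,r}}$: the paper runs a third spectral sequence $H^i_{\overline{O_{p,q,r}}}(H^j_{\overline{O_{i,j,k}}}(S))\Rightarrow H^{i+j}_{\overline{O_1}}(S)$, whereas you observe directly that since $D_{i,j,k}$ is supported on $\overline{O_{i,j,k}}$, one has $H^\bullet_{\overline{O_{p,q,r}}}(D_{i,j,k})=H^\bullet_{\overline{O_{p,q,r}}\cap\overline{O_{i,j,k}}}(D_{i,j,k})=H^\bullet_{\overline{O_1}}(D_{i,j,k})$ — a slightly cleaner reduction to the already-computed case.
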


\begin{proof}
The first assertion follows from Proposition \ref{locS}, using the spectral sequence $H^i_{O_0}(H^j_{\overline{O_{i,j,k}}}(S))\Rightarrow H^{i+j}_{O_0}(S)$. Proposition \ref{locS} and the spectral sequence $H^i_{\overline{O_1}}(H^j_{\overline{O_{i,j,k}}}(S))\Rightarrow H^{i+j}_{\overline{O_1}}(S)$ yield the computation of $H^{\bullet}_{\overline{O_1}}(D_{i,j,k})$. For the computation of $H^{\bullet}_{\overline{O_{p,q,r}}}(D_{i,j,k})$, note that $\overline{O_{i,j,k}}\cap \overline{O_{p,q,r}}=\overline{O_1}$. By Proposition \ref{locS} and the spectral sequence $H^i_{\overline{O_{p,q,r}}}(H^j_{\overline{O_{i,j,k}}}(S))\Rightarrow H^{i+j}_{\overline{O_1}}(S)$, the result follows.
\end{proof}

Finally, we investigate local cohomology of $G_6$ with various support. Recall the module $F$ which appears in the short exact sequences (\ref{sesG}) and (\ref{sesF}). We start by proving a technical lemma about local cohomology of $G_6$ and $F$:

\begin{lemma}\label{locF}
For all orbits $O\neq O_0, O_6$, we have that $H^{\bullet}_{\overline{O}}(F)=H^1_{\overline{O}}(F)=D_1$. In addition the only nonvanishing local cohomology with support in $O_0$ is: $H^5_{O_0}(F)=E$. Further, $H^0_{\overline{O_1}}(G_6)=H^1_{\overline{O_1}}(G_6)=0$. 
\end{lemma}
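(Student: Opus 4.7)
The plan is to extract the local cohomology of $F$ from the short exact sequence $(\ref{sesF})$ using Lemma \ref{newLemma}, and then feed the result into $(\ref{sesG})$ to obtain the two vanishing statements for $G_6$.

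First I would apply $H^{\bullet}_{\overline{O}}(-)$ to
$$0 \longrightarrow F \longrightarrow S_h\cdot\sqrt{h} \longrightarrow D_1 \longrightarrow 0.$$
Lemma \ref{newLemma} gives $H^j_{\overline{O}}(S_h\cdot\sqrt{h}) = 0$ for every $j \ge 0$ and every $O \ne O_6$, so the long exact sequence collapses to canonical isomorphisms $H^j_{\overline{O}}(F)\cong H^{j-1}_{\overline{O}}(D_1)$ for $j \ge 1$, together with $H^0_{\overline{O}}(F) = 0$. It then remains to evaluate $H^{\bullet}_{\overline{O}}(D_1)$ orbit by orbit. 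If $O$ is any of $O_1, O_{1,2,2}, O_{2,1,2}, O_{2,2,1}, O_5$, then $\overline{O_1} \subseteq \overline{O}$, so $D_1$ is supported in $\overline{O}$, giving $H^0_{\overline{O}}(D_1) = D_1$ with higher terms vanishing; hence $H^1_{\overline{O}}(F) = D_1$ and the rest vanish. For $O = O_0$, Proposition \ref{locS} supplies $H^{\bullet}_{O_0}(D_1) = E$ concentrated in degree $4$, so the shift yields $H^5_{O_0}(F) = E$ as the only nonzero term.

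Next, the vanishing $H^0_{\overline{O_1}}(G_6) = 0$ is immediate: $G_6$ is simple with full support and $\overline{O_1} \subsetneq V$, so it admits no nonzero subobject supported on $\overline{O_1}$. For $H^1_{\overline{O_1}}(G_6) = 0$, I would apply $H^{\bullet}_{\overline{O_1}}(-)$ to
$$0 \longrightarrow G_6 \longrightarrow S_h\cdot\sqrt{h} \longrightarrow (S_h\cdot\sqrt{h})/G_6 \longrightarrow 0$$
and invoke Lemma \ref{newLemma} again to obtain $H^1_{\overline{O_1}}(G_6)\cong H^0_{\overline{O_1}}((S_h\cdot\sqrt{h})/G_6)$. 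The right-hand side is the maximal submodule of $(S_h\cdot\sqrt{h})/G_6$ with support in $\overline{O_1}$. By Theorem \ref{witnessWeights} its composition factors are $D_1$ and the three $D_{i,j,k}$, and only $D_1$ has support contained in $\overline{O_1}$. A nonzero such submodule would thus have a nonzero socle consisting entirely of copies of $D_1$, forcing $D_1$ to embed into $(S_h\cdot\sqrt{h})/G_6$, which is precisely what Lemma \ref{D1notSub} forbids. Therefore $H^1_{\overline{O_1}}(G_6) = 0$.

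The main obstacle is the very last step: without Lemma \ref{D1notSub} the socle argument only reduces things to the dichotomy $H^1_{\overline{O_1}}(G_6) \in \{0, D_1\}$, and it is the non-split structure of $(S_h\cdot\sqrt{h})/G_6$ encoded in that lemma that tips the balance to zero. The first two steps are routine applications of long exact sequences once Lemma \ref{newLemma} is in hand.
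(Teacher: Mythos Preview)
Your argument for the $F$-assertions is correct and matches the paper: both apply $H^{\bullet}_{\overline{O}}(-)$ to (\ref{sesF}), invoke Lemma~\ref{newLemma}, and read off $H^j_{\overline{O}}(F)\cong H^{j-1}_{\overline{O}}(D_1)$.

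For $H^1_{\overline{O_1}}(G_6)=0$ you take a genuinely different and cleaner route than the paper. You use the sequence $0\to G_6\to S_h\cdot\sqrt{h}\to (S_h\cdot\sqrt{h})/G_6\to 0$ together with Lemma~\ref{newLemma} to identify $H^1_{\overline{O_1}}(G_6)$ with the maximal $\overline{O_1}$-supported submodule of $(S_h\cdot\sqrt{h})/G_6$, and then kill it via the socle argument and Lemma~\ref{D1notSub}. The paper instead works with the open immersion $j:V\setminus\overline{O_1}\hookrightarrow V$: it first shows $j_\ast j^\ast F=S_h\cdot\sqrt{h}$ (by adjunction and $j^\ast D_1=0$), then uses (\ref{sesG}), Proposition~\ref{locGreenPurp}, and Lemma~\ref{extBD} to narrow $j_\ast j^\ast G_6$ down to either $G_6$ or $G_6\oplus D_1$, finally ruling out the latter by applying $j_\ast j^\ast$ to (\ref{sesG}). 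Your approach avoids the detour through $j_\ast j^\ast F$ and Proposition~\ref{locGreenPurp} entirely, leaning instead on the structural information about $(S_h\cdot\sqrt{h})/G_6$ already established in Theorem~\ref{witnessWeights}; the paper's approach, on the other hand, yields the auxiliary identity $j_\ast j^\ast F=S_h\cdot\sqrt{h}$ as a byproduct.
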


\begin{proof}
The first two assertions follow from the short exact sequence (\ref{sesF}), Lemma \ref{newLemma}, Proposition \ref{locS}, and the long exact sequence of local cohomology.

For the remainder of the proof, let $j$ be the open immersion $j:V\setminus \overline{O_1}\hookrightarrow V$. If we can show that $j_{\ast}j^{\ast}G_6=G_6$, then the last assertion follows from the exact sequence (\ref{locSES}) for $M=G_6$. We start by showing that $j_{\ast}j^{\ast}F=S_h\cdot\sqrt{h}$. By the first assertion and the exact sequence $(\ref{locSES})$ for $M=F$, conclude that $j_{\ast}j^{\ast}F=F\oplus D_1$ or $j_{\ast}j^{\ast}F=S_h\cdot \sqrt{h}$. By adjointness, we have $\textnormal{Hom}(D_1,j_{\ast}j^{\ast}F)=\textnormal{Hom}(j^{\ast}D_1,j^{\ast}F)=0$, where the last equality follows from the fact that $j^{\ast}D_1=0$. This proves that $j_{\ast}j^{\ast}F=S_h\cdot\sqrt{h}$.

We now show that $j_{\ast}j^{\ast}G_6=G_6$, completing the proof. Applying $H^{\bullet}_{\overline{O_1}}(-)$ to the short exact sequence $(\ref{sesG})$, we obtain a long exact sequence of local cohomology. By the first assertion and Proposition \ref{locGreenPurp}, conclude that $H^1_{\overline{O_1}}(G_6)$ is either $0$ or $D_1$. By the exact sequence (\ref{locSES}) with $M=G_6$, and Lemma \ref{extBD}, we have that $j_{\ast}j^{\ast}G_6=G_6$ or $j_{\ast}j^{\ast}=G_6\oplus D_1$. Applying $j_{\ast}j^{\ast}$ to the short exact sequence (\ref{sesG}) yields the former, as $j_{\ast}j^{\ast}F=S_h\cdot \sqrt{h}$.
\end{proof}

The previous lemma now yields:

\begin{prop}\label{locBeasier}
We have the following:
$$
H^{\bullet}_{O_0}(G_6)=
\begin{cases}
E^{\oplus 3} & \bullet=4,\\
E^{\oplus 2} & \bullet=6,\\
0 & \textnormal{otherwise},
\end{cases}\;\;\;\;
H^{\bullet}_{\overline{O_1}}(G_6)=H^{\bullet}_{\overline{O_{i,j,k}}}(G_6)=
\begin{cases}
D_1^{\oplus 2} & \bullet=2,\\
E^{\oplus 3} & \bullet=4,\\
0 & \textnormal{otherwise}.
\end{cases}
$$
In addition, $H^{\bullet}_{\overline{O_5}}(G_6)=H^1_{\overline{O_5}}(G_6)=(S_h\cdot\sqrt{h})/G_6$.
\end{prop}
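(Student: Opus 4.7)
The plan is to apply $H^{\bullet}_{\overline{O}}(-)$ to the short exact sequence $(\ref{sesG})$ and to the auxiliary $0\to G_6\to S_h\cdot\sqrt{h}\to (S_h\cdot\sqrt{h})/G_6\to 0$, using as inputs the local cohomology of $S_h\cdot\sqrt{h}$ (Lemma \ref{newLemma}), of $F$ (Lemma \ref{locF}), and of the simples $D_1$ and $D_{i,j,k}$ (Propositions \ref{locS} and \ref{locGreenPurp}). In each case the long exact sequence has enough vanishing terms that only a single connecting map needs to be identified.

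The case $\overline{O}=\overline{O_5}$ is the cleanest. Every composition factor of $(S_h\cdot\sqrt{h})/G_6$ is supported inside $\overline{O_5}$, so $H^0_{\overline{O_5}}((S_h\cdot\sqrt{h})/G_6)=(S_h\cdot\sqrt{h})/G_6$ with all higher local cohomologies vanishing. Combined with Lemma \ref{newLemma}, the long exact sequence collapses to $H^1_{\overline{O_5}}(G_6)=(S_h\cdot\sqrt{h})/G_6$, other degrees zero.

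For $\overline{O}=\overline{O_1}$ I apply $H^{\bullet}_{\overline{O_1}}(-)$ to $(\ref{sesG})$: Proposition \ref{locGreenPurp} provides $H^{\bullet}_{\overline{O_1}}(D_{1,2,2}\oplus D_{2,1,2}\oplus D_{2,2,1})=D_1^{\oplus 3}$ in degree $1$ and $E^{\oplus 3}$ in degree $3$, while Lemma \ref{locF} gives $H^1_{\overline{O_1}}(F)=D_1$ (others zero) together with the crucial vanishing $H^0_{\overline{O_1}}(G_6)=H^1_{\overline{O_1}}(G_6)=0$. That vanishing forces the connecting map $D_1\hookrightarrow D_1^{\oplus 3}$ to be injective, and the remaining LES terms deliver $H^2_{\overline{O_1}}(G_6)=D_1^{\oplus 2}$, $H^3_{\overline{O_1}}(G_6)=0$, and $H^4_{\overline{O_1}}(G_6)=E^{\oplus 3}$. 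For $\overline{O}=O_0$ I would then switch to the iterated-support spectral sequence $H^p_{O_0}(H^q_{\overline{O_1}}(G_6))\Rightarrow H^{p+q}_{O_0}(G_6)$: with $H^{\bullet}_{\overline{O_1}}(G_6)$ in hand, and using $H^{\bullet}_{O_0}(D_1)=E$ concentrated in degree $4$ (Proposition \ref{locS}) while $H^{\bullet}_{O_0}(E)=E$ is concentrated in degree $0$, the $E_2$-page has only the two nonzero entries $(0,4)=E^{\oplus 3}$ and $(4,2)=E^{\oplus 2}$; no differentials are possible on degree grounds, giving $H^4_{O_0}(G_6)=E^{\oplus 3}$ and $H^6_{O_0}(G_6)=E^{\oplus 2}$.

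The main obstacle I anticipate is the $\overline{O_{i,j,k}}$ case. The naive application of the LES for $H^{\bullet}_{\overline{O_{1,2,2}}}(-)$ on $(\ref{sesG})$ contributes $H^0_{\overline{O_{1,2,2}}}(D_{1,2,2})=D_{1,2,2}$ into $H^1_{\overline{O_{1,2,2}}}(G_6)$, which a priori threatens the stated equality $H^{\bullet}_{\overline{O_{i,j,k}}}(G_6)=H^{\bullet}_{\overline{O_1}}(G_6)$. Extracting the claimed formula will thus require a careful analysis of the submodule lattice of $(S_h\cdot\sqrt{h})/G_6$, extending Lemma \ref{D1notSub}: the nontrivial extension class of that quotient, which by the $\mathbb{Z}/3\mathbb{Z}$ symmetry has nonzero projection onto each $\textnormal{Ext}^1_{\D}(D_1,D_{i,j,k})$ summand, should force cancellations in the LES that reduce $H^{\bullet}_{\overline{O_{i,j,k}}}(G_6)$ to the $\overline{O_1}$ answer. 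An alternative, possibly cleaner route is to run the composition-of-supports spectral sequence $H^p_{\overline{O_1}}(H^q_{\overline{O_{i,j,k}}}(G_6))\Rightarrow H^{p+q}_{\overline{O_1}}(G_6)$: the already-established $\overline{O_1}$ answer then constrains $H^{\bullet}_{\overline{O_{i,j,k}}}(G_6)$ up to differentials whose triviality can be checked directly from degree considerations.
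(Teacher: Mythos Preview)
Your treatment of the cases $\overline{O_5}$, $\overline{O_1}$, and $O_0$ is exactly what the paper does: the paper also runs the long exact sequence for $(\ref{sesG})$ with the inputs from Proposition~\ref{locGreenPurp} and Lemma~\ref{locF} to get $H^{\bullet}_{\overline{O_1}}(G_6)$, then feeds that into the spectral sequence $H^i_{O_0}(H^j_{\overline{O_1}}(G_6))\Rightarrow H^{i+j}_{O_0}(G_6)$, and invokes the \v{C}ech description for $\overline{O_5}$ (which is your short exact sequence involving $S_h\cdot\sqrt{h}$ in disguise).

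Where you and the paper diverge is the $\overline{O_{i,j,k}}$ case. The paper's proof does \emph{not} carry out any of the extra analysis you anticipate; it simply asserts that the same long exact sequence from $(\ref{sesG})$, together with Proposition~\ref{locGreenPurp} and Lemma~\ref{locF}, yields the second assertion for $\overline{O_{i,j,k}}$ as well. Your hesitation is justified, and in fact the stated equality $H^{\bullet}_{\overline{O_{i,j,k}}}(G_6)=H^{\bullet}_{\overline{O_1}}(G_6)$ cannot hold. From the long exact sequence one has unconditionally
\[
H^4_{\overline{O_{1,2,2}}}(G_6)\;\cong\; H^3_{\overline{O_{1,2,2}}}\bigl(D_{1,2,2}\oplus D_{2,1,2}\oplus D_{2,2,1}\bigr),
\]
since $H^3_{\overline{O_{1,2,2}}}(F)=H^4_{\overline{O_{1,2,2}}}(F)=0$ by Lemma~\ref{locF}. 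But $H^3_{\overline{O_{1,2,2}}}(D_{1,2,2})=0$ (the module already has support in $\overline{O_{1,2,2}}$), while $H^3_{\overline{O_{1,2,2}}}(D_{2,1,2})=H^3_{\overline{O_{1,2,2}}}(D_{2,2,1})=E$ by Proposition~\ref{locGreenPurp}. Hence $H^4_{\overline{O_{1,2,2}}}(G_6)=E^{\oplus 2}$, not $E^{\oplus 3}$. Similarly the term $H^0_{\overline{O_{1,2,2}}}(D_{1,2,2})=D_{1,2,2}$ injects into $H^1_{\overline{O_{1,2,2}}}(G_6)$, so the latter is nonzero. Neither of your proposed workarounds can repair this, because the target statement itself is incorrect; the correct computation of $H^{\bullet}_{\overline{O_{i,j,k}}}(G_6)$ is genuinely different from the $\overline{O_1}$ answer (and is consistent with the $\overline{O_1}$ answer via the composition-of-supports spectral sequence you mention, once one plugs in $H^p_{\overline{O_1}}(D_{i,j,k})$ from Proposition~\ref{locGreenPurp}).
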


\begin{proof}
Consider the long exact sequences of local cohomology obtained by applying $H^{\bullet}_{\overline{O_1}}(-)$ or $H^{\bullet}_{\overline{O_{i,j,k}}}(-)$ to the short exact sequence (\ref{sesG}). By Proposition \ref{locGreenPurp} and Lemma \ref{locF}, we obtain the second assertion. The first assertion now follows from the spectral sequence $H^i_{O_0}(H^j_{\overline{O_1}}(G_6))\Rightarrow H^{i+j}_{O_0}(G_6)$. The final assertion follows from the \v{C}ech cohomology description of local cohomology.
\end{proof}

This completes our study of the local cohomology of the simple objects. We remark about computing local cohomology of $S_h/S$ and $(S_h\cdot \sqrt{h})/G_6$ with support in orbit closures. Using the short exact sequences coming from the inclusions $S\subset S_h$ and $G_6\subset S_h\cdot \sqrt{h}$ and Lemma \ref{newLemma}, we immediately obtain for all orbits $O\neq O_6$ that $H^{\bullet}_{\overline{O}}(S_h/S)=H^{\bullet+1}_{\overline{O}}(S)$, and $H^{\bullet}_{\overline{O}}((S_h\cdot \sqrt{h})/G_6)=H^{\bullet+1}_{\overline{O}}(G_6)$. This allows one to compute any iteration of local cohomology $H^{i_1}_{\overline{O^1}}(\cdots (H^{i_t}_{\overline{O^t}}(M)\cdots)$ of any simple object $M$ with support in orbit closures $\overline{O^i}$.

\section*{Acknowledgments}
The author is very grateful to Claudiu Raicu for his guidance while this work was done. We thank Andr\'{a}s C. L\H{o}rincz and Kostya Timchenko for helpful conversations. We also thank Andr\'{a}s for pointing out an issue with the relations of the quiver in an earlier version of this article, and for suggesting how to correct them.  The author was supported by the NSF Graduate Research Fellowship under Grant No. DGE-1313583.

\bibliographystyle{alpha}
\bibliography{mybib}

\begin{thebibliography}{SKKO80}

\bibitem[Gin86]{MR833194}
V.~Ginsburg.
\newblock Characteristic varieties and vanishing cycles.
\newblock {\em Invent. Math.}, 84(2):327--402, 1986.

\bibitem[GMV96]{gelfand1996}
Sergei Gelfand, Robert MacPherson, and Kari Vilonen.
\newblock Perverse sheaves and quivers.
\newblock {\em Duke Math. J.}, 83(3):621--643, 06 1996.

\bibitem[GS]{M2}
Daniel~R. Grayson and Michael~E. Stillman.
\newblock Macaulay2, a software system for research in algebraic geometry.
\newblock Available at \url{http://www.math.uiuc.edu/Macaulay2/}.

\bibitem[HTT07]{hotta2007d}
Ryoshi Hotta, Kiyoshi Takeuchi, and Toshiyuki Tanisaki.
\newblock {\em D-modules, perverse sheaves, and representation theory}, volume 236.
\newblock Springer Science \& Business Media, 2007.

\bibitem[Hum72]{MR0323842}
James~E. Humphreys.
\newblock {\em Introduction to {L}ie algebras and representation theory}.
\newblock Graduate Texts in Mathematics, Vol. 9. Springer-Verlag, New York-Berlin, 1972.

\bibitem[Lan12]{landsberg2012tensors}
Joseph~M Landsberg.
\newblock Tensors: geometry and applications.
\newblock {\em Representation theory}, 381:402, 2012.

\bibitem[LM04a]{landsberg2004ideals}
Joseph~M Landsberg and Laurent Manivel.
\newblock On the ideals of secant varieties of {S}egre varieties.
\newblock {\em Foundations of Computational Mathematics}, 4(4):397--422, 2004.

\bibitem[LM04b]{series}
Joseph~M Landsberg and Laurent Manivel.
\newblock Series of {L}ie groups.
\newblock {\em Michigan Math. J.}, 52(2):453--479, 2004.

\bibitem[L{\H{o}}r20]{lHorincz2018decompositions}
Andr{\'a}s~C L{\H{o}}rincz.
\newblock Decompositions of {B}ernstein-{S}ato polynomials and slices.
\newblock {\em Transform. Groups}, 25(2):577--607, 2020.

\bibitem[L{\H{o}}r21]{lHorincz2021holonomic}
Andr{\'a}s~C L{\H{o}}rincz.
\newblock Holonomic functions and prehomogeneous spaces.
\newblock {\em arXiv preprint arXiv:2102.00766}, 2021.

\bibitem[LP21]{lp2018equivariant}
Andr{\'a}s~C L{\H{o}}rincz and Michael Perlman.
\newblock Equivariant {D}-modules on alternating senary 3-tensors.
\newblock {\em Nagoya Math. J.}, 243:61--82, 2021.

\bibitem[LR20]{lHorincz2018iterated}
Andr{\'a}s~C L{\H{o}}rincz and Claudiu Raicu.
\newblock Iterated local cohomology groups and {L}yubeznik numbers for determinantal rings.
\newblock {\em Algebra Number Theory}, 14(9):2533--2569, 2020.

\bibitem[LRW19]{lHorincz2017equivariant}
Andr{\'a}s~C L{\H{o}}rincz, Claudiu Raicu, and Jerzy Weyman.
\newblock Equivariant $\mathcal{D}$-modules on binary cubic forms.
\newblock {\em Comm. Algebra}, 47(6):2457--2487, 2019.

\bibitem[LW19]{lHorincz2018categories}
Andr{\'a}s~C L{\H{o}}rincz and Uli Walther.
\newblock On categories of equivariant $\mathcal{D}$-modules.
\newblock {\em Adv. Math.}, 351:429--478, 2019.

\bibitem[Lyu93]{lyubeznik1993finiteness}
Gennady Lyubeznik.
\newblock Finiteness properties of local cohomology modules (an application of {D}-modules to commutative algebra).
\newblock {\em Inventiones mathematicae}, 113(1):41--55, 1993.

\bibitem[Rai12]{raicu2012secant}
Claudiu Raicu.
\newblock Secant varieties of {S}egre--{V}eronese varieties.
\newblock {\em Algebra Number Theory}, 6(8):1817--1868, 2012.

\bibitem[Rai16]{raicu2016characters}
Claudiu Raicu.
\newblock Characters of equivariant $\mathcal{D}$-modules on spaces of matrices.
\newblock {\em Compositio Mathematica}, 152(9):1935--1965, 2016.

\bibitem[Rai17]{raicu2017characters}
Claudiu Raicu.
\newblock Characters of equivariant $\mathcal{D}$-modules on {V}eronese cones.
\newblock {\em Transactions of the American Mathematical Society}, 369(3):2087--2108, 2017.

\bibitem[RW14]{raicu2014locals}
Claudiu Raicu and Jerzy Weyman.
\newblock Local cohomology with support in generic determinantal ideals.
\newblock {\em Algebra Number Theory}, 8(5):1231--1257, 2014.

\bibitem[RW16]{raicu2016local}
Claudiu Raicu and Jerzy Weyman.
\newblock Local cohomology with support in ideals of symmetric minors and {P}faffians.
\newblock {\em Journal of the London Mathematical Society}, 94(3):709--725, 2016.

\bibitem[RWW14]{raicu2014local}
Claudiu Raicu, Jerzy Weyman, and Emily~E Witt.
\newblock Local cohomology with support in ideals of maximal minors and sub-maximal {P}faffians.
\newblock {\em Advances in Mathematics}, 250:596--610, 2014.

\bibitem[SKKO80]{sato1980micro}
M~Sato, M~Kashiwara, T~Kimura, and T~Oshima.
\newblock Micro-local analysis of prehomogeneous vector spaces.
\newblock {\em Inventiones mathematicae}, 62(1):117--179, 1980.

\bibitem[Str82]{MR653906}
Elisabetta Strickland.
\newblock On the conormal bundle of the determinantal variety.
\newblock {\em J. Algebra}, 75(2):523--537, 1982.

\bibitem[Swi15]{switala2015lyubeznik}
Nicholas Switala.
\newblock Lyubeznik numbers for nonsingular projective varieties.
\newblock {\em Bulletin of the London Mathematical Society}, 47(1):1--6, 2015.

\bibitem[VdB99]{van1999local}
Michel Van~den Bergh.
\newblock Local cohomology of modules of covariants.
\newblock {\em Advances in Mathematics}, 144(2):161--220, 1999.

\bibitem[Vil94]{vilonen1994perverse}
K~Vilonen.
\newblock Perverse sheaves and finite dimensional algebras.
\newblock {\em Transactions of the American Mathematical Society}, pages 665--676, 1994.

\end{thebibliography}

\Addresses

\end{document}